\newcommand{\R}{{\mathbf R}}
\newcommand{\Z}{{\mathbf Z}}
\newcommand{\N}{{\mathbf N}}
\newcommand{\T}{{\mathbf T}}
\newcommand    {\e}{{\mathbf x}}
\newcommand    {\by}{{\mathbf y}}
\newcommand    {\bs}{{\mathbf s}}
\newcommand    {\C}{{\mathbf C}}
\newcommand    {\bz}{{\mathbf z}}
\newtheorem{theorem}{Theorem}[section]
\newtheorem{proposition}[theorem]{Proposition}
\newtheorem{lemma}[theorem]{Lemma}
\newtheorem{corollary}[theorem]{Corollary}
\theoremstyle{definition}
\newtheorem{definition}[theorem]{Definition}
\theoremstyle{remark}
\newtheorem{remark}[theorem]{Remark}
\numberwithin{equation}{section}
\newtheorem{example}{Example}
\newtheorem{problem}{\bf Problem}[section]
\begin{document}

\title[Discreteness of the spectrum for Schr\"odinger operator]{
	Conditions for discreteness of the \\spectrum to Schr\"odinger operator via\\
	non-increasing rearrangement, Lagrangian
	\\  relaxation and perturbations}

\author[L. Zelenko]
{ Leonid Zelenko} 

%\date{}
\address{%
Department of Mathematics \\
University of Haifa  \\
Haifa 31905  \\
Israel}
\email{zelenko@math.haifa.ac.il}

\begin{abstract}
This work is a continuation of our previos paper \cite{Zel1}, where
for the Schr\"odinger operator $H=-\Delta+ V(\e)\cdot$ $(V(\e)\ge 0)$,  acting in the space $L_2(\R^d)\,(d\ge 3)$, some sufficient conditions for discreteness of its spectrum have been
obtained on the base of  well known Mazya -Shubin criterion and an optimization problem for a set function, which is an infinite-dimensional generalization of a binary linear programming problem. A sufficient condition for discreteness of the spectrum is formulated in terms of the non-increasing 
rearrangement of the potential $V(\e)$. Using the method of Lagrangian relaxation for this optimization problem, we obtain a sufficient condition for discreteness of the spectrum in terms of expectation and deviation of the potential. By means of suitable perturbations of the potential we obtain conditions for discreteness of the spectrum, covering  potentials which tend to infinity only on subsets of cubes, whose Lebesgue measures tend to zero when the cubes go to infinity. Also the case where the operator $H$ is defined in the space $L_2(\Omega)$ is considered ($\Omega$ is an open domain in $\R^d$).  	
\end{abstract}

\subjclass{Primary 47F05, 47B25, \\47D08, 35P05; Secondary 81Q10, 90C10, 90C27, 91A12} 	

\keywords{Schr\"odinger operator,
	discreteness of the spectrum,  \\ rearrangement of a function, optimization problem, Lagrangian \\relaxation,  perturbations.} 

\maketitle

\tableofcontents

\section{Introduction} \label{sec:introduction}
\setcounter{equation}{0}

This work is a continuation of our previous paper \cite{Zel1}.

We consider the Schr\"odinger operator $H=-\Delta+ V(\e)\cdot$,
acting in the space $L_2(\R^d)$. In what follows we
assume that $d\ge 3$, $V(\e)\ge 0$ and $V(\cdot)\in L_{1,loc}(\R^d)$. Physically $V(\e)$ is the 
potential of an external electric field. In \cite{Zel1} some sufficient conditions for discreteness of the spectrum of $H$ have been
obtained on the base of  well known Mazya -Shubin criterion \cite{M-Sh} and an optimization problem for a set function. Since also in the present paper we shall use  the Mazya -Shubin result, let us formulate it. 
Following to \cite{M-Sh}, 
consider in $\R^d$ an open domain $\mathcal{G}$ satisfying the conditions:

(a) $\mathcal{G}$ is bounded and star-shaped with respect to any point of an open ball $B_\rho(0)\,(\rho>0)$ contained in $\mathcal{G}$;

(b) $\mathrm{diam}(\mathcal{G})=2$.

As it was noticed in \cite{M-Sh}, condition (a) implies that $\mathcal{G}$ can be represented in the form
\begin{equation}\label{formofcalG}
\mathcal{G}=\{\e\in\R^d:\,\e=r\omega,\, |\omega|=1,\,0\le r<r(\omega)\}, 
\end{equation} 
where $r(\omega)$ is a positive Lipschitz function on the standard unit sphere $S^{d-1}\subset\R^d$. For $r>0$
and $\by\in\R^d$ denote 
\begin{equation}\label{dfcalGry}
\mathcal{G}_r(\by):=\{\e\in\R^d:\,r^{-1}\e\in\mathcal{G}\}+\{\by\}.
\end{equation}
Denote by
$\mathcal{N}_{\gamma}(\by,r)\;(\gamma\in(0,1))$ the collection of all
compact sets $F\subseteq\bar{\mathcal{G}}_r(\by)$ satisfying the condition
\begin{equation}\label{defNcap}
\mathrm{cap}(F)\le\gamma\,\mathrm{cap}(\bar{\mathcal G}_r(\by)),
\end{equation}
where $\mathrm{cap}(F)$ is the harmonic capacity, defined by \eqref{dfWincap}.
\begin{theorem}\label{thMazSh}[\cite{M-Sh}, Theorem 2,2]
	The spectrum of the operator $H$ is discrete, if  
	for some $r_0>0$ and for any $r\in(0,r_0)$ the condition
	\begin{equation}\label{cndmolch1}
	\lim_{\by\rightarrow\infty}\inf_{F\in\mathcal{N}_{\gamma(r)}(\by,r)}\int_{{\mathcal G}_r(\by)\setminus
		F}V(\e)\, \mathrm{d}\e=\infty,
	\end{equation}
	is satisfied, where 
	\begin{equation}\label{cndgammar}
	\forall\,r\in(0,\,r_0):\;\gamma(r)\in(0,1)\quad\mathrm{and}\quad
	\limsup_{r\downarrow 0} r^{-2}\gamma(r)=\infty,
	\end{equation}	
\end{theorem}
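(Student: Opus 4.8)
The plan is to derive this from two ingredients: the abstract characterization of discreteness of the spectrum of a lower-semibounded operator through its quadratic form, and a local capacitary Poincar\'e inequality of Maz'ya type that converts the set-function quantity in \eqref{cndmolch1} into an analytic estimate. Throughout I regard \eqref{cndmolch1} as a capacitary refinement of the classical Molchanov condition $\int_{\mathcal{G}_r(\by)}V\,\mathrm{d}\e\to\infty$, in which one is permitted to discard sets of small harmonic capacity.

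\emph{Step 1 (reduction to a decay estimate).} First I would associate with $H$ its quadratic form $h[u]=\int_{\R^d}|\nabla u|^2\,\mathrm{d}\e+\int_{\R^d}V|u|^2\,\mathrm{d}\e$ on the form domain $\Dc[h]=\{u\in H^1(\R^d):\int_{\R^d}V|u|^2\,\mathrm{d}\e<\infty\}$. Since $H\ge 0$, its spectrum is discrete if and only if the embedding of $\Dc[h]$, normed by $(h[u]+\|u\|_{L_2}^2)^{1/2}$, into $L_2(\R^d)$ is compact. Because $V\in L_{1,loc}(\R^d)$, form-bounded sets are bounded in $H^1_{loc}$, so the local part of this embedding is already compact by the Rellich--Kondrachov theorem; the whole statement therefore reduces to a uniform decay-at-infinity estimate: for every $\varepsilon>0$ there is $R>0$ such that
\begin{equation*}
\int_{|\e|>R}|u|^2\,\mathrm{d}\e\le\varepsilon\,h[u]\qquad\text{for all }u\in\Dc[h].
\end{equation*}
Recording this equivalence isolates the genuinely geometric content of the theorem.

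\emph{Step 2 (local capacitary inequality).} The heart of the argument is an estimate on a single domain $\mathcal{G}_r(\by)$ of the form
\begin{equation*}
\int_{\mathcal{G}_r(\by)}|u|^2\,\mathrm{d}\e\le C\Bigl(r^2\int_{\mathcal{G}_r(\by)}|\nabla u|^2\,\mathrm{d}\e+\frac{r^{d}}{m(\by,r)}\int_{\mathcal{G}_r(\by)}V|u|^2\,\mathrm{d}\e\Bigr),
\end{equation*}
where $m(\by,r):=\inf_{F\in\mathcal{N}_{\gamma(r)}(\by,r)}\int_{\mathcal{G}_r(\by)\setminus F}V\,\mathrm{d}\e$, the constant $C=C(d,\mathcal{G})$, and the inequality being valid once $\gamma(r)$ dominates $r^2$ (the two sides are dimensionally matched, which fixes the exponents). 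Here the capacity enters exactly as in Maz'ya's capacitary strong-type inequalities: passing from the ordinary Poincar\'e constant to one that also sees $V$ requires discarding a set on which $u$ cannot be controlled, and the admissible sets are precisely those of capacity at most $\gamma(r)\,\mathrm{cap}(\bar{\mathcal{G}}_r(\by))$. The condition $\limsup_{r\downarrow 0}r^{-2}\gamma(r)=\infty$ in \eqref{cndgammar} is what permits $r$ to be taken arbitrarily small while keeping the capacity budget large enough to hold $C$ under control. I expect this inequality --- proved by level-set and truncation arguments together with the comparison of the harmonic capacities of $F$ and of the whole domain --- to be the main obstacle, since it is the only place where potential theory is genuinely used and where the scaling exponents in $r$ and $\gamma$ must be matched correctly.

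\emph{Step 3 (assembly).} Given $\varepsilon>0$, I would fix $r$ small along a sequence realizing the $\limsup$ in \eqref{cndgammar}, so that simultaneously $Cr^2<\varepsilon$ and the local inequality of Step 2 applies. By \eqref{cndmolch1} the quantity $m(\by,r)\to\infty$ as $\by\to\infty$, hence there is $R$ with $Cr^{d}/m(\by,r)<\varepsilon$ whenever $\mathcal{G}_r(\by)$ meets $\{|\e|>R\}$. I would then cover $\{|\e|>R\}$ by translates $\mathcal{G}_r(\by_k)$ of finite overlap multiplicity $N=N(d,\mathcal{G})$ (using the inscribed ball $B_\rho(0)\subset\mathcal{G}$ from \eqref{formofcalG} to guarantee such a covering for a merely star-shaped $\mathcal{G}$), apply Step 2 on each, and sum: the finite-overlap property turns the sum of the right-hand sides into at most $N\varepsilon\,h[u]$, which is the decay estimate of Step 1 up to relabelling $\varepsilon$, and the proof concludes. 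The only remaining bookkeeping is to verify that the boundary overlaps of the covering contribute a bounded multiple of $h[u]$, which is routine given representation \eqref{formofcalG}.
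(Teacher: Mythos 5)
First, note that the paper itself contains no proof of this statement: Theorem \ref{thMazSh} is quoted from \cite{M-Sh} (Theorem 2.2) and is used as an external input throughout, so there is no internal argument to compare yours against; I can only measure your outline against the published proof.

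Your architecture --- localization of discreteness to a uniform decay estimate at infinity, a capacitary Poincar\'e inequality on each $\mathcal{G}_r(\by)$, and a finite-multiplicity covering --- is indeed the skeleton of the Maz'ya--Shubin argument. But the proposal has a genuine gap, and it sits exactly where you predicted: Step 2 is the whole theorem, and you have both deferred its proof and mis-stated it. The coefficient of the Dirichlet term in the local inequality cannot be $Cr^2$ with $C$ independent of $\gamma$: the correct coefficient is of order $r^d/\bigl(\gamma(r)\,\mathrm{cap}(\bar{\mathcal{G}}_r(\by))\bigr)\asymp r^2/\gamma(r)$, because the negligible set $F$ may swallow a $\gamma(r)$-fraction of the capacity of $\bar{\mathcal{G}}_r(\by)$, and the capacitary Poincar\'e inequality only sees the capacity of what remains. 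With your version the hypothesis $\limsup_{r\downarrow 0}r^{-2}\gamma(r)=\infty$ in \eqref{cndgammar} would be superfluous --- the theorem would hold for an arbitrary $\gamma(r)\in(0,1)$ with no constraint relating $\gamma$ to $r$, which is strictly stronger than Maz'ya--Shubin and false (the restriction on how fast $\gamma(r)$ may decay is essential, which is why \cite{M-Sh} impose it). The tell is in your Step 3: you ``fix $r$ along a sequence realizing the $\limsup$'' but then only use $Cr^2<\varepsilon$, so the $\limsup$ condition does no work in your assembly. The correct assembly chooses $r_j$ with $\gamma(r_j)/r_j^2\to\infty$ precisely so that the coefficient $Cr_j^2/\gamma(r_j)$ of the Dirichlet integral tends to $0$. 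Until the local inequality is actually proved with the right $\gamma$-dependence (via the truncation and level-set argument you only gesture at), the proposal is an accurate table of contents for the proof in \cite{M-Sh} rather than a proof.
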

Let us notice that in \cite{M-Sh} the above result was formulated for the more general case, where the operator $H$ is defined in $L_2(\Omega)$ ($\Omega$ is an open domain in $\R^d$). Also a necessary condition for discreteness of the spectrum was obtained there, which is close to sufficient one. Notice that it was proved in \cite{Mol}  that the condition
\begin{equation}\label{intVtendinf} 
\forall\;r(0,\,r_0])\quad\lim_{|\by|\rightarrow\infty}\int_{Q_r(\by)}V(\e)\,\mathrm{d}\e=\infty
\end{equation}
is necessary for discreteness of the spectrum of $H$.

As we have noticed in \cite{Zel1}, condition \eqref{cndmolch1} of Theorem \ref{thMazSh} is hardly verifiable, because in order to test it, one needs to solve a difficult optimization problem, whose cost functional is the set function  $\mathcal{I}(F)=\int_{{\mathcal G}_r(\by)\setminus F}V(\e)\, \mathrm{d}\e$ and the constrain 
$F\in\mathcal{N}_{\gamma(r)}(\by,r)$ is submodular (because``cap'' is a submodular set function (definition \eqref{submod})). The goal of the previous and present papers is to obtain some easier verifiable (or at least better intuitively explainable) sufficient conditions for discreteness of the spectrum to $H$. Such kind of conditions was found in the papers \cite{Ben-Fort}, \cite{Si1}, \cite{L-S-W} and \cite{GMD}  without use of the Mazya -Shubin result. In \cite{Zel1} we have estimated the cost functional $\mathcal{I}(F)$ in \eqref{cndmolch1}  from below using the isocapacity inequality and replacing $F\in\mathcal{N}_{\gamma(r)}(\by,r)$ by a weaker but additive constrain. By this way on the base of  Theorem \ref{thMazSh} we have obtained in \cite{Zel1} some easier verifiable sufficient conditions for discreteness of the spectrum in terms of measures, which permit a reformulation in terms of non-increasing rearrangement of the potential $V(\e)$. As we have shown, these conditions are more general than ones obtained in the papers mentioned above.

In the present paper we have obtained some  sufficient conditions
for discreteness of the spectrum of $H$, using along with the arguments mentioned above also the
method of Lagrangian relaxation for the optimization problem, mentioned above. Notice that this problem is an infinite-dimensional generalization a of binary problem from integer linear programming (Problem \ref{extrprobl} and Remark \ref{rembinprob}
). The method of Lagrangian relaxations for solving  such kind of problems was applied, for instance, in the paper \cite{YRMR}. We have used some ideas from this paper. We have obtained also some result on perturbations of   the potential $V(\e)$ preserving discreteness of the spectrum of $H$, which permit to obtain conditions for discreteness of the spectrum, covering potentials that do not belong to the scope of other claims of the paper. We consider  also the case where operator $H$ is defined in the space $L_2(\Omega)$, where $\Omega$ is an open domain in $\R^d$.
 
  Let us describe briefly the main results of this paper.
 
 Theorems \ref{thcondlebesgue},  \ref{thcondlebesrear} and \ref{prlogmdensdoubrearr} were formulated and proved in the previous paper \cite{Zel1}, but in the proof of Theorem \ref{thcondlebesgue}  a technical mistake was admitted (Remark \ref{remmistake}). Therefore we give the proofs of these theorems in the present paper.
 
 Theorem \ref{thcondlebesgue} is obtained by a direct use of the isocapacity inequality. It yields a sufficient condition of discreteness of the spectrum for $H$ in terms of an optimization problem involving Lebesgue measure instead of the capacity. Let us notice that for the case $\gamma(r)\equiv const$ Theorem \ref{thcondlebesgue} was proved in \cite{Kon-Shub} (Theorem 6.1), where the operator $H$ on a Riemannian manifold with bounded geometry was considered.
 
 Theorem \ref{thcondlebesrear}  yields a sufficient condition for discreteness of the spectrum of $H$ in terms of the non-increasing rearrangement of $V(\e)$  with respect to Lebesgue measure on cubes that are  going to infinity. The sense of this theorem is following: the spectrum of $H$ is discrete, if  the normalized Lebesgue measure of subsets of cubes, where $V(\e)$ tends to infinity as the cubes go to infinity does not tend to zero too fast when the sizes of cubes tend to zero.
 
 Theorem \ref{prlogmdensdoubrearr} , based on Theorem \ref{thcondlebesrear}, yields a condition for discreteness of the spectrum for $H$
 in terms of the non-increasing  rearrangement of the potential $V(\e)$ with respect to Lebesgue measure only on cubes from $m$-adic partitions of unit cubes $Q_1(\vec l)\,(\vec l\in\Z^d)$. This circumstance enables us to construct  nontrivial examples of the Schr\"odinger operator with discrete spectrum (Example \ref{ex4}). In the formulation of this theorem we use our concept of  $(\log_m,\,\theta)$- dense system of subsets of a unit cube (Definition \ref{dfdenslogmtetpart1}).  Furthermore, in this formulation  the lower bound of the normalized Lebesgue measure of subsets of cubes from the $m$-adic partition, where $V(\e)$ tends to infinity as the cubes go to infinity, depends on sizes of these cubes. The property of $(\log_m,\,\theta)$-density ensures that on cubes with any placement and arbitrarily small sizes a similar dependence on the size is preserved for the lower bound of the normalized measure of the sets, mentioned above.  This circumstance enables us to use Theorem \ref{thcondlebesrear}  in the proof of Theorem \ref{prlogmdensdoubrearr}.

 Theorem \ref{thestLagr} yields a lower bound for optimal value of the functional in optimization Problem \ref{extrprobl} in terms of expectation and deviation of the function $W(x)$ with respect to a probability measure. In the proof of this theorem  the method of Lagrangian relaxation is used.
 
 Theorem \ref{thdiscLagr} is based on Theorem \ref{thestLagr}. The sense of the sufficient condition for discreteness of the spectrum given by this theorem is following: the spectrum of $H$ is discrete, if the rate of growth to infinity of expectation of the potential $V(\e)$ on a cube with respect to the normalized Lebesgue measure is not less than the rate of growth of its deviation while the cube goes to infinity. Let us notice that Theorem \ref{thdiscLagr} covers more narrow class of potentials than  Theorem \ref{thcondlebesrear}, since its condition limits the rate of growth of the potential in the case where $\lim_{\e\rightarrow\infty}V(\e)=+\infty$ (Example \ref{exrearbutnotLagr} and Remarks \ref{remequivcond}, \ref{remLagrimplrear}). Nevertheless,  Example \ref{exLagr} shows that it covers potentials that do not belong to the scope of criterion from \cite{GMD}, which is most resent among conditions for discreteness of the spectrum of $H$ obtained by other authors. Furthermore, in some cases the condition of Theorem \ref{thdiscLagr} is easier verifiable than the condition of Theorem \ref{thcondlebesrear} (Example \ref{exlagreasier}).
 
 Theorem \ref{thperturb} describes some perturbations $W(\e)$ of the potential $V(\e)$ preserving discreteness of the spectrum of $H$. Its proof uses upper  estimate \eqref{estintlor1} for absolute value of the quadratic form $\int_\Omega W(\e)|u(\e)|^2\,\mathrm{d}\e$, obtained in Section \ref{sec:B} (claim (ii) of Proposition \ref{lmest}). In the right hand side of this  estimate  the $L_d(\Omega)$-norm of  a vector field $\vec\Gamma(\e)$ takes part.  This field is a solution of the divergence equation $\mathrm{div}\vec\Gamma=W$ in the domain $\Omega$, This circumstance permits to make small the norm $\Vert\vec\Gamma\Vert_d$ via fast sign-alternating oscillations of $W(\e)$ (Example \ref{ex5}). Let us notice that a similar cancellation  phenomenon was discovered and used in \cite{M-V}.
 
 Theorem \ref{thFour}, based on Theorem \ref{thperturb}, yields a sufficient condition for discreteness of of the spectrum of $H$ in terms of the Fourier transform on the torus $\T^d$ for the $1$-periodic continuation (by all the variables) of the function $V\vert_{Q_1(\vec l)}\,(\vec l\in\Z^d)$ from the cube $Q_1(\vec l)$ to the whole $\R^d$. In the proof of this theorem we use Proposition \ref{prdiveq}, where the existence of a periodic potential solution of the divergence equation $\mathrm{div}\vec\Gamma=W$ is established and its $L_p$-norm $(p>2)$ is estimated.

Theorem \ref{thlattsupp} is motivated by a counterexample, constructed by A.M. Molchanov in \cite{Mol}, which shows that condition \eqref{intVtendinf} is not sufficient for  discreteness of the spectrum of $H$ for $d>1$. In this example the support of $V(\e)$ is  a union of disjoint balls, 
 whose centers form lattices on cubes $Q_1(\vec l)\,(\vec l\in\Z^d)$ and  densities of the lattices tend to infinity as $|\vec l|\rightarrow\infty$ (Remark \ref{remcountexMolch}). The following question appears: how to choose densities of lattices, radii of balls and values of the potential on balls such that the spectrum of the operator $H$ would be discrete ?  The answer of Theorem \ref{thlattsupp} is following: this occurs when condition \eqref{intVtendinf} is fulfilled with $Q_1(\vec l)$ instead of $\mathcal{G}_r(\by)$ (Remark \ref{remonint}) and density of one-dimensional sections of the lattices tends to infinity faster than values of the potential $V(\e)$ on the balls as the cubes go to infinity. Let us notice that in the counterexample of Molchanov the last condition is not fulfilled, since it contradicts the``small capacity property'' for the support of $V(\e)$. The proof of Theorem \ref{thlattsupp} is based on  Theorem \ref{thFour}. Also notice that this theorem covers such potentials that Lebesgue measures of their supports on unit cubes tend to zero as the cubes go to infinity, hence they do not belong to the scope of all the claims, based on Theorem \ref{thcondlebesgue} (Remark \ref{remoncompar}).  
 
 Theorems \ref{thmeasinstcapRVOm} and \ref{thdiscLagrOm} yield sufficient conditions for discreteness of the spectrum of $H$ in the case where $H$ is defined in  $L_2(\Omega)$ ($\Omega$ is an open domain in $\R^d$). They are generalizations of Theorems \ref{thcondlebesgue} and \ref{thdiscLagr} to this case.
 Let us notice that if in Theorem \ref{thdiscLagrOm} the Lebesgue measure of ${\mathcal G}_r(\by)\cap\Omega$ approaches from above some threshold for large $|\by|$, the condition of this theorem becomes weaker (Remark \ref{reminfldev}). On the other hand, if it is less than this threshold (i.e. the domain $\Omega$ becomes closer to bounded), then even the potential $V(\e)\equiv 0$ satisfies this condition (Corollary \ref{cordiscspecLaplace}).

  Let us notice that in \cite{T} Michael Taylor have found an alternative for the Mazya-Shubin result. His necessary and sufficient conditions for discreteness of the spectrum of $H$ are formulated in terms of the {\it scattering length } of the potential $V(\e)$ on boxes that are going to infinity. It would be interesting to extract from this result an easier verifiable sufficient condition which would be more general than results obtained in the present paper.

The paper is organized as follows. After this Introduction, in Section \ref{sec:notation}, we give basic notations, in Section \ref{sec:prel} (Preliminaries)  we introduce some concepts used in the paper. In Section \ref{sec:prevwork} we formulate some results from the previous work \cite{Zel1}, used in this paper. In Section \ref{sec:mainres} we formulate the main results of the paper and in Section \ref{sec:proofmainres} we prove them. In Section \ref{sec:examples} we recall briefly some examples, constructed in \cite{Zel1} and construct some counterexamples for claims obtained in the present paper. Sections \ref{sec:A} and \ref{sec:B} are appendices. In Section \ref{sec:A} we prove some claims on solutions of the divergence equation and  Section \ref{sec:B} yields some estimates for  $|\int_\Omega W(\e)|u(\e)|^2\,\mathrm{d}\e|$, mentioned above, by means of Sobolev Embedding Theorems. 
 
\section{Basic notations} \label{sec:notation}
\setcounter{equation}{0}

$\langle\e,\,\by\rangle\;(\e,\by\in\R^d)$ is the canonical inner product in the
real vector space $\R^d$; $|\e|=\sqrt{\langle\e\cdot\e\rangle}$ is the Euclidean
norm in $\R^d$;\vskip1mm

\noindent If $\e=(x_1,\,x_2,\,\dots,\,x_d)\in\R^d$, denote $|\e|_\infty=\max_{1\le j\le d}|x_j|$;
\vskip1mm

\noindent $\mathrm{mes}_d$ is Lebesgue measure in $\R^d$;
\vskip1mm

\noindent $\Sigma_L(\Omega)$ is the $\sigma$-algebra of all Lebesgue measurable subsets of of a domain $\Omega\subseteq\R^d$;
\vskip1mm

\noindent$\T$ is the circle (one-dimensional torus) with the length $1$;
\vskip1mm

\noindent$\T^d=\times_{k=1}^d \T$ is the $d$-dimensional torus;\vskip1mm
\vskip1mm
\noindent$\Z$ is the ring of integers; $\N$ is the set of natural numbers;
\vskip1mm

\noindent$\Z^d=\times_{k=1}^d \Z$;\vskip1mm
\vskip1mm

\noindent$[x]$ is the integer part of a real number $x$;
\vskip1mm

\noindent $\mathcal{F}(f)(\vec k)=\int_{\T^d}f(\e)\exp\big(-i\,2\pi\,\langle\vec k,\,
\e\rangle\big)\,\mathrm{d}\e\quad (\vec k\in\Z^d)$ is Fourier transform  of a complex-valued function $f(\e)$ defined on $\R^d$ and $1$-periodic by all the variables $x_1,\,x_2,\,\dots,\,x_d$ (hence $f$ can be considered as defined on $\T^d$ and it is possible to say about Fourier transform of $f $ on $\T^d$);
\vskip1mm

\noindent $\mathcal{F}^{-1}(g)(\e)=\sum_{\vec k\in\Z^d}g(\vec k)\exp\big(i\,2\pi\,\langle\vec k,\,
\e\rangle\big)\,(g:\Z^d\rightarrow\C,\,\e\in\R^d)$ is the transform inverse to $\mathcal{F}$;
\vskip1mm

\noindent $\hat f(\vec\omega):=\int_{\R^d}f(\e)\exp\big(-i2\pi\,\langle\e,\,\vec\omega\rangle\big)\,\mathrm{d}\e\,(\vec\omega\in\R^d)$ is Fourier transform on $\R^d$ of a function $f:\,\R^d\rightarrow\C$;
\vskip1mm

\noindent For Fourier transform and inverse Fourier transform of vector functions we shall use the same notations;
\vskip1mm

\noindent $\Vert f\Vert_p\,(p\in[1,\,\infty])$ is the $L_p$-norm of a function $f\in L_p(\Omega)\,(\Omega\subseteq\R^d$ and the same notation we shall use for the $L_p$-norm in $L_p(\T^d)$;
\vskip1mm

\noindent $l_p(\Z^d)\,(p\in[1,\,\infty])$ is the space of functions $g:\Z^d\rightarrow\C$ such that
\begin{equation*}
\sum_{\vec k\in\Z^d}|g(\vec k)|^p\\<\infty;
\end{equation*}
 \vskip1mm

\noindent $\Vert g\Vert_{l_p}$ is the $l_p$-norm of a function $g\in l_p(\Z^d)$;
\vskip1mm

\noindent For the analogous spaces of vector-valued functions and the norms in them we shall use the same notations;
\vskip1mm

\noindent $C_0^\infty(\Omega)$ is the collection of all functions from $C^\infty(\Omega)$ having compact supports ($\Omega$ is an open domain in $\R^d$);
\vskip1mm

\noindent $\mathrm{supp}(f)$ is the support of a function $f:\,\Omega\rightarrow\C$ ;
\vskip1mm

\noindent
$W_p^l(\Omega)\,(p\ge 1)$ is Sobolev space of functions having generalized partial derivatives up to order $l$, which belong to $L_p(\Omega)$;
\vskip1mm

\noindent$\chi_A:\;X\rightarrow\R$ is the characteristic function of a subset
$A\subseteq X$;\vskip1mm 
\vskip1mm

\noindent $B_r(\by)$ is the open ball in $\R^d$ whose radius and center are $r>0$ and $\by$;
\vskip1mm

\noindent The unit cube $Q=[0,\,1]^d\subset\R^d$ and  dilation of it $Q_r:=r\cdot Q\,(r>0)$ and translation of the latter $Q_r(\by):=Q_r+\{\by\}$.
\vskip1mm

Some specific notations will be introduced in what follows.

\section{Preliminaries} \label{sec:prel}  
\setcounter{equation}{0}

Let us come to agreement on some notations and terminology. Let $\Omega$ be an open domain in $R^d$. We denote by
$\Sigma_B(\bar\Omega)$ the $\sigma$-algebra of all
Borel subsets of $\bar\Omega$. It is known that $\Sigma_B(\bar\Omega\subseteq\Sigma_L(\bar\Omega)$.   
Let us recall the definition of the {\it harmonic (or Newtonian) capacity}\footnote[1]{In the Russian literature it is often called  \it{Wiener capacity.}}  of a compact set
$E\subset\R^d$ (\cite{M-Sh}):
\begin{eqnarray}\label{dfWincap}
&&\mathrm{cap}(E):=\inf\Big(\big\{\int_{\R^d}|\nabla u(\e)|^2\,
\mathrm{d}\e\,:\;u\in C^\infty(\R^d), \;u\ge
1\;\mathrm{on}\;E,\nonumber\\
&&u(\e)\rightarrow 0\;\mathrm{as}\;|\e|\rightarrow\infty \big\}\Big).
\end{eqnarray}
It is known (\cite{Ch}, \cite{Maz}, \cite{Maz1}) that the set function ``cap'' can be extended in a suitable manner from the collection of all compact subsets of the space $\R^d$ to the collection of all Borel subsets of it and the set function ``cap''
is monotonic and {\it submodular} (concave) in the sense that for any pair of sets $A,\,B\in\Sigma_B(\bar\Omega)$
\begin{equation}\label{submod}
\mathrm{cap}(A\cup B)+\mathrm{cap}(A\cap
B)\le\mathrm{cap}(A)+\mathrm{cap}(B)
\end{equation}
and the {\it isocapacity inequality} is valid:
\begin{equation}\label{isocapineq}
\forall\,F\in\Sigma_B(\bar\Omega):\quad\mathrm{mes}_d(F)\le
c_d\,(\mathrm{cap}(F))^{d/(d-2)}.
\end{equation}
with $c_d=\big(d(d-2)(\mathrm{mes}_d(B_1(0)))^{2/d}\big)^{-d/(d-2)}$, which  comes as
identity if $F$ is a closed ball.

In \cite{Zel1} we have used the following concepts:
\begin{definition}\label{dfregpar1}
	We call a subset of $\R^d$ a {\it regular parallelepiped}, if it has the form $\times_{k=1}^d[a_k,\,b_k]$.
\end{definition}
\begin{definition}\label{dfdenslogmtetpart1}
	Suppose that $m>1$	and $\theta\in(0,1)$. A sequence $\{D_n\}_{n=1}^\infty$ of subsets of a cube $Q_1(\by)$ is said to be a {\it $(\log_m,\,\theta)$- dense system} in $Q_1(\by)$, if 
	
	(a) each $D_n$ is a finite union of regular parallelepipeds;
	
	(b) for any 
	cube $Q_r(\bz)\subseteq Q_1(\by)$ with $r\in\big(0,\,\min\{1,\,\frac{1}{\theta m^2}\}\big)$ there is  
	$j\in\{1,2,\dots, \big[\log_m\big(\frac{1}{\theta r}\big)\big]\}$
	such that for some regular parallelepiped $\Pi\subseteq D_j$ there is a cube $Q_{\theta r}(\bs)$, contained in $\Pi\cap Q_r(\bz)$.
\end{definition}

\section{Some results from the previous work} \label{sec:prevwork}
\setcounter{equation}{0}

All the results of our previous paper \cite{Zel1} and partly of the present one are based on the following optimization problem for set functions:

\begin{problem}\label{extrprobl}
	Let $(X,\,\Sigma,\,\mu)$ be a measure space with a non-negative measure
	$\mu$  and $W(x)$ be a
	function defined on $X$ and belonging to
	$L_1(X,\,\mu)$. For $t\in(0,\,\mu(X))$ consider the
	collection $\mathcal{E}(t,X,\,\mu)$ of all $\mu$-measurable
	sets $E\subseteq X$ such that $\mu(E)\ge t$. The goal is to find
	the quantity
	\begin{equation}\label{dfIVOm}
	I_W(t,\,X,\,\mu)=\inf_{E\in\mathcal{E}(t,X,\,\mu)}\int_E W(x)\,
	\mu(\mathrm{d}x).
	\end{equation}
\end{problem}
\begin{remark}\label{rembinprob}
Problem \ref{extrprobl} can be formulated in the following equivalent form: to find
the quantity
\begin{equation*}
I_W(t,\,X,\,\mu)=\inf_{E\in\Sigma}\int_X W(x)\chi_E(x)\,
\mu(\mathrm{d}x)
\end{equation*}
under the constrain $\int_X\chi_E(x)\,
\mu(\mathrm{d}x\ge t$. We see that this problem is an infinite-dimensional generalization of a binary problem from integer linear programming \cite{N-W}.
\end{remark}
\begin{remark}\label{remonVge0}
In \cite{Zel1} we have assumed in the formulation of Problem \ref{extrprobl} that $W(\e)\ge 0$.  Here we omit this assumption, since the method of Lagrangian relaxation, used below for lower estimation of $I_W(t,\,X,\,\mu)$, does not need it. 
\end{remark}
In the formulation of next claim, proved in \cite{Zel1}, we have  used the following notations.
For the measure space and the function $W(x)$, introduced in Problem \ref{extrprobl},
consider the quantity:
\begin{equation}\label{dfJVOm}
J_W(t,\,X,\mu):=\int_{\mathcal{K}_W^-\big(t,\,X,\,\mu\big)}W(x)\,
\mu(\mathrm{d}x)+\big(t-\kappa_W^-(t,\,X,\,\mu)\big) W_\star(t,\,X,\,\mu),
\end{equation}
where $W_\star(t,\,X,\,\mu)$ is the non-decreasing rearrangement of the function $W(x)$, i.e,,
\begin{equation}\label{dfstOm}
W_\star(t,\,X,\,\mu):=\sup\{s>0:\;\lambda_\star(s,\,W,\,X,\,\mu)<
t\}\quad (t>0)
\end{equation}
with
\begin{equation}\label{dfFVOm}
\lambda_\star(s,\,W,\,X,\,\mu)=\mu({\mathcal L} _\star(s,W,X)),
\end{equation}
\begin{equation}\label{dfKVOm}
{\mathcal L} _\star(s,W,X)=\{x\in X:\;W(x)\le s\}.
\end{equation}
Furthermore,
\begin{equation}\label{dfKWt}
\mathcal{K}_W^-\big(t,\,X,\,\mu\big)={\mathcal L} _\star(s^-,W,X)\vert_{s=W_\star(t,\,X,\,\mu)},
\end{equation}
and
\begin{equation}\label{dfFVommin}
\kappa_W^-(t,\,X,\,\mu)=\mu\big(\mathcal{K}_W^-(t,\,X,\,\mu)\big),
\end{equation}
where
\begin{equation}\label{dfKVsminOm}
{\mathcal L} _\star(s^-,W,X)=\bigcup_{u<s}{\mathcal L} _\star(u,W,X)=\{x\in
X:\;W(x)< s\},
\end{equation}

The following claim from \cite{Zel1} solves Problem \ref{extrprobl} for a non-atomic  measure:

\begin{proposition}\label{prsolextrprob}[\cite{Zel1}, Theorem 3.3]
	Suppose that, in addition to  conditions of Problem \ref{extrprobl}, $W(\e)\ge 0$ and the measure 
	$\mu$ is non-atomic. Then
	
\noindent(i) for any $t\in(0,\,\mu(X))$ there exists a $\mu$-measurable set $\tilde{\mathcal K}\subseteq X$ such that 
	\begin{equation}\label{estmesKV}
	\mu(\tilde{\mathcal K})=t,
	\end{equation}
	for the quantity $J_W(t,\,X,\,\mu)$, defined by
	\eqref{dfJVOm}-\eqref{dfFVommin}, the representation
	\begin{equation}\label{reprJV}
	J_W\big(t,\,X,\,\mu\big)= \int_{\tilde{\mathcal
			K}}W(x)\, \mu(\mathrm{d}x)
	\end{equation}
	is valid and
	\begin{eqnarray}\label{propKV}
	&&\forall\;x\in\tilde{\mathcal K}:\quad W(x)\le
	W_\star(t,\,X,\,\mu)\nonumber,\\
	&&\forall\;x\in X\setminus\tilde{\mathcal K}:\quad W(x)\ge W_\star(t,\,X,\,\mu);
	\end{eqnarray}
	\vskip2mm
	\noindent(ii) the equality
	\begin{equation}\label{IeqJ}
	I_W(t,\,X,\,\mu)=J_W(t,\,X,\,\mu)
	\end{equation}
	is valid.
\end{proposition}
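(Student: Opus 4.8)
The plan is to read Problem \ref{extrprobl} as a continuous ``bathtub'' problem: to minimize $\int_E W\,\mu(\mathrm{d}x)$ subject to $\mu(E)\ge t$ with $W\ge0$, one fills the sublevel sets of $W$ in order of increasing value until the required mass $t$ is attained. The critical level is exactly $s_0:=W_\star(t,X,\mu)$, and the entire argument rests on the two inequalities
\begin{equation*}
\kappa_W^-(t,X,\mu)=\mu\big(\{W<s_0\}\big)\le t\le \mu\big(\{W\le s_0\}\big).
\end{equation*}
To get these I would write $\lambda_\star(s)=\mu(\{W\le s\})$; by \eqref{dfstOm}, $s_0=\sup\{s>0:\lambda_\star(s)<t\}$, and since $\lambda_\star$ is non-decreasing, every $u<s_0$ satisfies $\lambda_\star(u)<t$, so continuity of $\mu$ from below along $\{W<s_0\}=\bigcup_{u<s_0}\{W\le u\}$ gives $\kappa_W^-\le t$, while every $s>s_0$ satisfies $\lambda_\star(s)\ge t$, so continuity from above along $\{W\le s_0\}=\bigcap_{s>s_0}\{W\le s\}$ gives $\mu(\{W\le s_0\})\ge t$.

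Granting these inequalities, part (i) reduces to one application of non-atomicity. Since $\mu(\{W=s_0\})=\mu(\{W\le s_0\})-\kappa_W^-\ge t-\kappa_W^-\ge0$, and a non-atomic measure assumes every intermediate value on measurable subsets (Sierpi\'nski), I can select a measurable $A\subseteq\{W=s_0\}$ with $\mu(A)=t-\kappa_W^-$. Put $\tilde{\mathcal K}:=\mathcal{K}_W^-(t,X,\mu)\cup A=\{W<s_0\}\cup A$. Additivity yields \eqref{estmesKV}; since $W\equiv s_0$ on $A$, splitting $\int_{\tilde{\mathcal K}}W$ over $\{W<s_0\}$ and $A$ reproduces the two terms of \eqref{dfJVOm}, which is \eqref{reprJV}; and by construction $W\le s_0$ on $\tilde{\mathcal K}$ while $X\setminus\tilde{\mathcal K}\subseteq\{W\ge s_0\}$, which is \eqref{propKV}.

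For part (ii), admissibility of $\tilde{\mathcal K}$ (that is $\mu(\tilde{\mathcal K})=t$) together with \eqref{reprJV} gives $I_W\le J_W$ immediately. The reverse inequality is the exchange estimate: for any competitor $E$ with $\mu(E)\ge t$,
\begin{align*}
\int_E W\,\mu(\mathrm{d}x)-\int_{\tilde{\mathcal K}} W\,\mu(\mathrm{d}x)
&=\int_{E\setminus\tilde{\mathcal K}} W\,\mu(\mathrm{d}x)-\int_{\tilde{\mathcal K}\setminus E} W\,\mu(\mathrm{d}x)\\
&\ge s_0\big(\mu(E\setminus\tilde{\mathcal K})-\mu(\tilde{\mathcal K}\setminus E)\big)=s_0\,(\mu(E)-t)\ge0,
\end{align*}
where $W\ge s_0$ off $\tilde{\mathcal K}$ and $W\le s_0$ on $\tilde{\mathcal K}$ justify the middle step, and $\mu(\tilde{\mathcal K})=t<\infty$ (cancelling $\mu(E\cap\tilde{\mathcal K})$) gives the last identity. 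Taking the infimum over $E$ proves $I_W\ge J_W$ and hence \eqref{IeqJ}. I expect the only genuine obstacle to be book-keeping at the threshold when $\mu(X)=\infty$: the degenerate case $s_0=0$ (forcing $\kappa_W^-=0$, $J_W=0$) and the possibility that the sublevel sets carry infinite measure must be isolated and handled by hand, though in each such case the asserted identities are immediate from $W\ge0$. The conceptually essential ingredient is non-atomicity, which is exactly what lets me hit the value $t-\kappa_W^-$ on the level set $\{W=s_0\}$; without it one could only sandwich the optimum between $\kappa_W^-$ and $\mu(\{W\le s_0\})$.
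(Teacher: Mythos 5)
The paper does not actually contain a proof of this proposition: it is imported verbatim from \cite{Zel1} (Theorem 3.3), so there is no in-paper argument to compare yours against. On its own merits your proof is correct in the setting in which the proposition is used here (namely $\mu$ finite --- normalized Lebesgue measure on a bounded set), and it is the standard ``bathtub principle'' argument: the sandwich $\mu(\{W<s_0\})\le t\le\mu(\{W\le s_0\})$ follows exactly as you derive it, Sierpi\'nski's intermediate-value theorem for non-atomic measures supplies $A\subseteq\{W=s_0\}$ with $\mu(A)=t-\kappa_W^-$, and the exchange estimate (using $W\ge s_0$ off $\tilde{\mathcal K}$, $W\le s_0$ on $\tilde{\mathcal K}$, and $\mu(\tilde{\mathcal K})=t<\infty$) yields optimality. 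This is surely the same mechanism as in \cite{Zel1}.

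One caveat on your closing remark. When $\mu(X)=\infty$ and the sublevel sets $\{W\le s\}$ all have infinite measure, it is not true that ``the asserted identities are immediate'': part (ii) survives, but the existence claim of part (i) can genuinely fail. For instance, $X=\mathbf{R}$, $\mu=\mathrm{mes}_1$, $W(x)=e^{-x^2}$, any $t>0$: here $s_0=0$, $\kappa_W^-=0$, $J_W=I_W=0$, yet no set of measure $t$ carries zero integral of $W$, so no $\tilde{\mathcal K}$ as in (i) exists; similarly $W(x)=1+e^{-x^2}$ breaks your inequality $t\le\mu(\{W\le s_0\})$, whose proof via continuity from above requires some set $\{W\le s\}$, $s>s_0$, of finite measure. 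The honest scope of the statement and of your proof is therefore finite $\mu$ (or at least $\mu(\{W\le s\})<\infty$ for some $s>s_0$); this costs nothing for the applications in the paper, but it should be said rather than waved at.
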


In the next claim. proved in \cite{Zel1}, we have obtained a two-sided estimate for the solution $J_W(t,\,X,\,\mu)$ of Problem \ref{extrprobl}  via a non-increasing rearrangement of the function $W(x)$ on  $X$. This rearrangement is following:
\begin{equation}\label{dfSVyrdel}
 W^\star(t;\,X;\,\mu):=\sup\{s>0\,:\;\lambda^\star(s;\,\,W;\,X;\,\mu)\ge
t\}\quad (t>0),
 \end{equation}
where
\begin{eqnarray}\label{dfLVsry}
&&\lambda^\star(s;\,W;\,X;\,\mu)=\mu(\mathcal{L}^\star(s;\,W;\,X)),\nonumber\\
&&\mathcal{L}^\star(s;\,W;\,X)=\{x\in X,:\; W(x)\ge s\}.
\end{eqnarray}

The promised claim is following:

\begin{proposition}\label{lmJR}[\cite{Zel1}, Proposition 3.4]
	 Suppose that, in addition to conditions of Problem \ref{extrprobl} and Proposition
	\ref{prsolextrprob}, the measure $\mu$ is finite.
	Then for $\theta>1$	and $t\in(0,\,\mu(X))$ the estimates
	\begin{equation}\label{estJR1}
	J_W(\mu(X)-t/\theta,\,X,\,\mu))\ge\frac{(\theta-1)t}{\theta} W^\star(t;\,X;\,\mu),
	\end{equation}
	\begin{equation}\label{estJR2}
	J_W(\mu(X)-t,\,X,\,\mu)\le (\mu(X)-t) W^\star(t;\,X;\,\mu)
	\end{equation}
	are valid.
\end{proposition}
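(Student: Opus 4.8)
The plan is to reduce both inequalities to the optimization characterization of $J_W$ furnished by Proposition \ref{prsolextrprob}. Since $\mu$ is non-atomic, finite and $W\ge 0$, part (ii) of that proposition gives
$J_W(\tau,X,\mu)=I_W(\tau,X,\mu)=\inf\{\int_E W\,\mu(\mathrm{d}x):\,\mu(E)\ge\tau\}$
for every $\tau\in(0,\mu(X))$, and both values $\tau=\mu(X)-t/\theta$ and $\tau=\mu(X)-t$ lie in this range because $0<t<\mu(X)$ and $\theta>1$. The only structural fact I need about the non-increasing rearrangement is a direct consequence of its definition \eqref{dfSVyrdel}: writing $s^\star:=W^\star(t;X;\mu)$ and $\lambda^\star(s):=\mu(\{x\in X:W(x)\ge s\})$, the monotonicity of $s\mapsto\lambda^\star(s)$ together with the supremum in \eqref{dfSVyrdel} yields $\lambda^\star(s)\ge t$ for every $s<s^\star$ and $\lambda^\star(s)<t$ for every $s>s^\star$; moreover $s^\star<\infty$ because $W\in L_1$ and $\mu$ is finite.

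For the lower bound \eqref{estJR1} I would estimate the integral from below \emph{uniformly} over all admissible sets. Fix a measurable $E$ with $\mu(E)\ge\mu(X)-t/\theta$, so $\mu(X\setminus E)\le t/\theta$, and fix $s<s^\star$. With $A_s:=\{x:W(x)\ge s\}$ one has $\mu(A_s)=\lambda^\star(s)\ge t$, and since $W\ge 0$,
\[
\int_E W\,\mu(\mathrm{d}x)\ge\int_{E\cap A_s}W\,\mu(\mathrm{d}x)\ge s\,\mu(E\cap A_s)\ge s\big(\mu(A_s)-\mu(X\setminus E)\big)\ge s\Big(t-\frac{t}{\theta}\Big),
\]
where I used the elementary inequality $\mu(E\cap A_s)\ge\mu(A_s)-\mu(X\setminus E)$ valid for a finite measure. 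Taking the infimum over admissible $E$ and then letting $s\uparrow s^\star$ gives $J_W(\mu(X)-t/\theta,X,\mu)\ge\frac{(\theta-1)t}{\theta}\,s^\star$, which is \eqref{estJR1}.

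For the upper bound \eqref{estJR2} it suffices to exhibit one admissible set. Consider $E_0:=\{x:W(x)\le s^\star\}$. Since $\lambda^\star(s)<t$ for $s>s^\star$ and the sets $\{W\ge s\}$ increase to $\{W>s^\star\}$ as $s\downarrow s^\star$, continuity of $\mu$ from below gives $\mu(\{W>s^\star\})\le t$, hence $\mu(E_0)\ge\mu(X)-t$. Using non-atomicity of $\mu$ (the range of a non-atomic finite measure on subsets of $E_0$ is the full interval $[0,\mu(E_0)]$) I would choose a measurable $E\subseteq E_0$ with $\mu(E)=\mu(X)-t$ exactly; it is admissible, and since $W\le s^\star$ on $E$,
\[
J_W(\mu(X)-t,X,\mu)\le\int_E W\,\mu(\mathrm{d}x)\le s^\star\,\mu(E)=(\mu(X)-t)\,s^\star,
\]
which is \eqref{estJR2}.

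The substantive content is carried entirely by Proposition \ref{prsolextrprob}; once $J_W$ is identified with the infimum $I_W$, both inequalities follow from the definition of $W^\star$, monotonicity of $\mu$, and (for the upper bound) non-atomicity. Accordingly the only delicate point is the passage $s\uparrow s^\star$ and $s\downarrow s^\star$ across the possible jump of $\lambda^\star$ at $s^\star$: I would keep away from equalities at $s=s^\star$ and rely only on the strict one-sided inequalities $\lambda^\star(s)\ge t\ (s<s^\star)$ and $\lambda^\star(s)<t\ (s>s^\star)$ together with continuity of the finite measure $\mu$ along monotone sequences of sets. I expect this to be the main, though mild, obstacle; the remainder is bookkeeping.
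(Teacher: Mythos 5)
Your argument is correct. Note first that the paper does not actually prove Proposition \ref{lmJR} here --- it is imported verbatim from \cite{Zel1} (Proposition 3.4), so there is no in-text proof to compare against; the natural route in \cite{Zel1} is to work with the explicit representation \eqref{dfJVOm} of $J_W$ through the non-decreasing rearrangement $W_\star$ and the level set $\mathcal{K}_W^-$, and then translate $W_\star(\mu(X)-t)$ into $W^\star(t)$. You instead use only the identification $J_W=I_W$ from \eqref{IeqJ} and attack the optimization Problem \ref{extrprobl} directly, which is a genuinely different and arguably cleaner decomposition: for \eqref{estJR1} the uniform lower bound $\int_E W\ge s\big(\mu(A_s)-\mu(X\setminus E)\big)$ over admissible $E$, followed by $s\uparrow s^\star$, uses nothing beyond subadditivity of $\mu$ and the one-sided inequalities $\lambda^\star(s)\ge t$ for $s<s^\star$ and $\lambda^\star(s)<t$ for $s>s^\star$, both of which you correctly extract from \eqref{dfSVyrdel}; for \eqref{estJR2} your exhibition of a single admissible set $E\subseteq\{W\le s^\star\}$ with $\mu(E)=\mu(X)-t$ (via continuity of $\mu$ from below and the Sierpi\'nski range theorem for non-atomic finite measures) is exactly the point where non-atomicity is needed, and it is needed: taking all of $E_0$ would only give the weaker bound $s^\star\mu(E_0)$. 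The only unaddressed point is the degenerate case $W^\star(t;X;\mu)=0$ (the set in \eqref{dfSVyrdel} empty), where \eqref{estJR1} is trivial since $W\ge0$ and \eqref{estJR2} still follows from your second argument with $E_0=\{W=0\}$; this is a one-line remark, not a gap. What your approach buys is self-containedness --- it never touches $W_\star$, $\mathcal{K}_W^-$ or $\kappa_W^-$ --- at the cost of re-deriving inside the proof the optimal-set structure that Proposition \ref{prsolextrprob}(i) already provides.
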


\section{Main results} \label{sec:mainres}
\setcounter{equation}{0} 

\subsection{Non-increasing rearrangement}

In this subsection we shall assume that $V(\e)\ge 0$.

Denote by $\mathcal{M}_{\gamma}(\by,r)\;(\gamma\in(0,1))$ the
collection of all Borel sets $F\subseteq\mathcal{G}_r(\by)$
satisfying the condition
$\mathrm{mes}_d(F)\le\gamma\,\mathrm{mes}_d(\mathcal{G}_r(\by))$, where the domain $\mathcal{G}_r(\by)$ is defined by 
\eqref{dfcalGry}, \eqref{formofcalG}. A direct use of isocapacity inequality \eqref{isocapineq}
leads to the following claim:
\begin{theorem}\label{thcondlebesgue}
	If for some $r_0>0$ and a  function $\gamma(r)$,	satisfying the conditions 
	\begin{equation}\label{cndtildgam}
	\forall\,r\in(0,\,r_0):\;\gamma(r)\in(0,1)\quad\mathrm{and}\quad\limsup_{r\downarrow
		0}\,r^{-2d/(d-2)}\,\gamma(r)=\infty,
	\end{equation}
	the condition
	\begin{equation}\label{cndlebesgue}
	\lim_{|\by|\rightarrow\infty}\inf_{F\in\mathcal{M}_{\gamma(r)}(\by,r)}\int_{\mathcal{G}_r(\by)\setminus
		F}V(\e)\, \mathrm{d}\e=\infty
	\end{equation}
	is satisfied for any $r\in(0,r_0]$, then the spectrum of the operator
	$H=-\Delta+V(\e)$ is discrete.
\end{theorem}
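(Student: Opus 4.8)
The plan is to deduce Theorem \ref{thcondlebesgue} from the Mazya--Shubin criterion (Theorem \ref{thMazSh}) by showing that the measure-based condition \eqref{cndlebesgue} implies the capacity-based condition \eqref{cndmolch1}, upon a suitable matching of the auxiliary functions $\gamma(r)$. The essential analytic input is the isocapacity inequality \eqref{isocapineq}, which bounds Lebesgue measure by a power of capacity. First I would fix $r\in(0,r_0]$ and $\by\in\R^d$ and compare the two admissible classes. Given a compact $F\in\mathcal{N}_{\gamma_1(r)}(\by,r)$, i.e.\ $\mathrm{cap}(F)\le\gamma_1(r)\,\mathrm{cap}(\bar{\mathcal G}_r(\by))$, I would apply \eqref{isocapineq} to estimate
\begin{equation*}
\mathrm{mes}_d(F)\le c_d\,(\mathrm{cap}(F))^{d/(d-2)}\le c_d\,\gamma_1(r)^{d/(d-2)}\,(\mathrm{cap}(\bar{\mathcal G}_r(\by)))^{d/(d-2)}.
\end{equation*}
The goal is to rewrite the right-hand side as $\gamma_2(r)\,\mathrm{mes}_d(\mathcal{G}_r(\by))$ for an appropriate $\gamma_2(r)$, so that $F\in\mathcal{M}_{\gamma_2(r)}(\by,r)$; then the infimum over the larger measure-class $\mathcal{M}_{\gamma_2(r)}$ is no larger than that over $\mathcal{N}_{\gamma_1(r)}$, and condition \eqref{cndlebesgue} for $\gamma_2$ forces \eqref{cndmolch1} for $\gamma_1$.

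To carry out this matching I would exploit the scaling behaviour of capacity and measure under the dilation and translation defining $\mathcal{G}_r(\by)$ in \eqref{dfcalGry}. Since capacity scales like $\mathrm{cap}(\bar{\mathcal G}_r(\by))=r^{d-2}\,\mathrm{cap}(\bar{\mathcal G})$ and Lebesgue measure like $\mathrm{mes}_d(\mathcal{G}_r(\by))=r^{d}\,\mathrm{mes}_d(\mathcal{G})$ (both independent of $\by$ by translation invariance), the bound above becomes
\begin{equation*}
\mathrm{mes}_d(F)\le c_d\,\gamma_1(r)^{d/(d-2)}\,r^{d}\,(\mathrm{cap}(\bar{\mathcal G}))^{d/(d-2)},
\end{equation*}
so dividing by $\mathrm{mes}_d(\mathcal{G}_r(\by))=r^d\,\mathrm{mes}_d(\mathcal{G})$ the powers of $r$ cancel and I obtain $\gamma_2(r)=C\,\gamma_1(r)^{d/(d-2)}$ with a dimensional constant $C$ depending only on $\mathcal{G}$ and $d$. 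Thus, given a function $\gamma(r)=\gamma_2(r)$ satisfying \eqref{cndtildgam}, I must produce $\gamma_1(r)$ satisfying \eqref{cndgammar} with $C\,\gamma_1(r)^{d/(d-2)}\le\gamma_2(r)$, i.e.\ $\gamma_1(r)=(\gamma_2(r)/C)^{(d-2)/d}$.

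The crux is then the compatibility of the growth conditions on $\gamma$. I need to verify that the hypothesis $\limsup_{r\downarrow0}r^{-2d/(d-2)}\gamma_2(r)=\infty$ in \eqref{cndtildgam} yields the Mazya--Shubin requirement $\limsup_{r\downarrow0}r^{-2}\gamma_1(r)=\infty$ in \eqref{cndgammar}. With $\gamma_1=(\gamma_2/C)^{(d-2)/d}$ one computes
\begin{equation*}
r^{-2}\gamma_1(r)=C^{-(d-2)/d}\,r^{-2}\,\gamma_2(r)^{(d-2)/d}=C^{-(d-2)/d}\,\big(r^{-2d/(d-2)}\gamma_2(r)\big)^{(d-2)/d},
\end{equation*}
and since $x\mapsto x^{(d-2)/d}$ is increasing and unbounded, the $\limsup$ on the right is infinite precisely when $\limsup_{r\downarrow0}r^{-2d/(d-2)}\gamma_2(r)=\infty$. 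This exponent bookkeeping is exactly why the measure-version carries the stronger-looking exponent $2d/(d-2)$ in \eqref{cndtildgam} rather than the capacity-version's $2$, and checking it precisely is the main technical point. I would also confirm $\gamma_1(r)\in(0,1)$ for small $r$ (shrinking $r_0$ if necessary) so that $\mathcal{N}_{\gamma_1(r)}$ is well defined. With the two conditions reconciled, Theorem \ref{thMazSh} applies and the spectrum of $H$ is discrete, completing the proof.
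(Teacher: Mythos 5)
Your proposal is correct and follows essentially the same route as the paper: reduce to the Mazya--Shubin criterion by using the isocapacity inequality to show $\mathcal{N}_{\gamma_1(r)}(\by,r)\subseteq\mathcal{M}_{\gamma_2(r)}(\by,r)$ with $\gamma_1=(\gamma_2/C)^{(d-2)/d}$, and then check that the exponent $2d/(d-2)$ in \eqref{cndtildgam} transforms into the exponent $2$ of \eqref{cndgammar}. The only cosmetic difference is how the constant $C$ is produced — you use the $r^{d-2}$/$r^d$ scaling of capacity and measure, while the paper compares $\mathcal{G}_r(\by)$ with its circumscribed ball $B_{\bar r\cdot r}(\by)$, for which the isocapacity inequality is an identity — and both yield a constant $\ge 1$ depending only on $\mathcal{G}$ and $d$, so the argument goes through identically.
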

\begin{remark}\label{remmistake}
	In \cite{Zel1} in condition \eqref{cndtildgam} the technical mistake was admitted:  the exponent $-2(d-2)/d$ of $r$  was there instead of $-2d/(d-2)$.
\end{remark}

Let us introduce some notations.
We shall omit $\mu$ in the  notation \eqref{dfSVyrdel} of non-increasing rearrangement in the case where $X$ is an open domain $\Omega\subseteq\R^d$ and $\mu=\mathrm{mes}_d$  
, i.e., to write  $ W^\star(t,\Omega)$. 
In the case where $\Omega=\mathcal{G}_r(\by)$  we shall write  $W^\star(t,\by,r)$.  
If $\mathcal{G}_r(\by)$ is a cube $Q_r(\by)$, we shall use the same notation, if it could not lead to a confusion.

On the base of Theorem \ref{thcondlebesgue} we get the following claim:
\begin{theorem}\label{thcondlebesrear}
	If for some $r_0>0$ and a function $\gamma(r)$, satisfying the conditions \eqref{cndtildgam}, 
the condition
	\begin{equation}\label{cndlebesrear}
	\lim_{|\by|\rightarrow\infty} V^\star(\delta(r),\by,r)=\infty
	\end{equation}
	is fulfilled  for any $r\in(0,r_0]$ with $\delta(r)=\gamma(r)\mathrm{mes}_d(\mathcal{G}_r(0)$, then the spectrum of the operator
	$H=-\Delta+V(\e)$ is discrete.
\end{theorem}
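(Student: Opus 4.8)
The plan is to deduce Theorem \ref{thcondlebesrear} from Theorem \ref{thcondlebesgue} by verifying the hypothesis \eqref{cndlebesgue} of the latter for a suitably rescaled threshold function. Fix $\theta>1$ and set $\tilde\gamma(r):=\gamma(r)/\theta$. Since $\gamma(r)\in(0,1)$ and dividing by the constant $\theta$ does not affect the $\limsup$ in \eqref{cndtildgam}, the function $\tilde\gamma(r)$ again satisfies \eqref{cndtildgam}; hence it suffices to establish \eqref{cndlebesgue} with $\tilde\gamma(r)$ in place of $\gamma(r)$.

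First I would rewrite the inner infimum as an instance of Problem \ref{extrprobl}. Writing $X=\mathcal{G}_r(\by)$, $\mu=\mathrm{mes}_d$ and $M=\mathrm{mes}_d(\mathcal{G}_r(\by))$, translation and dilation invariance of Lebesgue measure give $M=\mathrm{mes}_d(\mathcal{G}_r(0))=r^d\,\mathrm{mes}_d(\mathcal{G})$, so that $\delta(r)=\gamma(r)M$. Passing from $F\in\mathcal{M}_{\tilde\gamma(r)}(\by,r)$ to its complement $E=X\setminus F$ sets up a correspondence between sets $F$ with $\mathrm{mes}_d(F)\le\tilde\gamma(r)M$ and sets $E$ with $\mu(E)\ge M-\tilde\gamma(r)M$, under which $\int_{X\setminus F}V=\int_E V$; since $V\ge 0$ is integrable the distinction between Borel and $\mu$-measurable sets does not change the infimum. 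Therefore
\begin{equation*}
\inf_{F\in\mathcal{M}_{\tilde\gamma(r)}(\by,r)}\int_{\mathcal{G}_r(\by)\setminus F}V(\e)\,\mathrm{d}\e
= I_V\big(M-\tilde\gamma(r)M,\,X,\,\mu\big).
\end{equation*}

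Next I would convert this into the rearrangement $V^\star$. Because $V\ge 0$ and $\mathrm{mes}_d$ is non-atomic and finite on $X$, Proposition \ref{prsolextrprob}(ii) gives $I_V=J_V$, and estimate \eqref{estJR1} of Proposition \ref{lmJR} applies with the present $\theta$ and $t:=\theta\,\tilde\gamma(r)M=\gamma(r)M=\delta(r)\in(0,M)$, since then $M-t/\theta=M-\tilde\gamma(r)M$. This yields
\begin{equation*}
I_V\big(M-\tilde\gamma(r)M,\,X,\,\mu\big)
\ge \frac{\theta-1}{\theta}\,\delta(r)\,V^\star\big(\delta(r),\by,r\big).
\end{equation*}
For fixed $r\in(0,r_0]$ the prefactor $\frac{\theta-1}{\theta}\delta(r)$ is a positive constant independent of $\by$, so hypothesis \eqref{cndlebesrear} forces the right-hand side, and hence the infimum, to tend to $+\infty$ as $|\by|\to\infty$. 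This is exactly condition \eqref{cndlebesgue} for $\tilde\gamma(r)$, and Theorem \ref{thcondlebesgue} then gives discreteness of the spectrum.

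I expect the only delicate point to be the bookkeeping in the second and third steps: one must choose the auxiliary threshold $\tilde\gamma(r)=\gamma(r)/\theta$ precisely so that the argument of $V^\star$ produced by \eqref{estJR1} lands exactly at $\delta(r)=\gamma(r)M$ rather than at some other multiple, while simultaneously checking that this rescaling preserves \eqref{cndtildgam}. The measurability identification (Borel versus $\mu$-measurable sets) and the passage to complements are routine, as is the verification that all hypotheses of Propositions \ref{prsolextrprob} and \ref{lmJR} (namely $V\ge 0$, $\mu$ non-atomic and finite, $t\in(0,\mu(X))$, and $\theta>1$) hold in the present setting.
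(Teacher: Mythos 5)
Your proposal is correct and follows essentially the same route as the paper's own proof: the same auxiliary threshold $\tilde\gamma(r)=\gamma(r)/\theta$, the same reduction of the infimum over $\mathcal{M}_{\tilde\gamma(r)}(\by,r)$ to Problem \ref{extrprobl} via complementation, and the same application of Proposition \ref{prsolextrprob} together with estimate \eqref{estJR1} at $t=\delta(r)$ to obtain the lower bound $\frac{\theta-1}{\theta}\delta(r)V^\star(\delta(r),\by,r)$, after which Theorem \ref{thcondlebesgue} concludes. Your bookkeeping of the hypotheses ($V\ge 0$, finiteness and non-atomicity of $\mathrm{mes}_d$ on $\mathcal{G}_r(\by)$, $t\in(0,\mu(X))$) is accurate and, if anything, slightly more explicit than the paper's.
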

\begin{remark}\label{remequivcond}
	Notice that condition \eqref{cndlebesrear} is easier verifiable than condition \eqref{cndlebesgue}. On the other hand, Proposition \ref{prsolextrprob} and 
	estimates \eqref{estJR1}, \eqref{estJR2} imply that these conditions are equivalent in the following sense: for some function $\tilde\gamma(r)$	satisfying conditions \eqref{cndtildgam} the condition \eqref{cndlebesgue} is fulfilled  if and only if
	for some function $\gamma(r)$	satisfying conditions \eqref{cndtildgam} the condition
	\eqref{cndlebesrear} is fulfilled. 
\end{remark}

\begin{remark}\label{remGMD}
	From Theorem 6 of \cite{GMD} the following criterion of
	discreteness of the spectrum for operator $H=-\Delta+V(\e)$
	follows (in our notations): if for some numbers $\delta>0$,
	$c\in(0,1)$ and $r_0>0$ and for any $\by\in\R^d$, $r\in(0, r_0)$
	the condition
	\begin{equation}\label{cndGMD}
	\lambda^\star\Big(\frac{\delta}{\mathrm{mes}_d(Q_r(0))}\int_{Q_r(\by)}V(\e)\,d\e,V,\,r,\,\by\Big)\ge
	c\;\mathrm{mes}_d(Q_r(0))
	\end{equation}
	is fulfilled, then discreteness of the spectrum of the operator $H$ is
	equivalent to the condition:
	$\lim_{|\by|\rightarrow\infty}\int_{Q_r(\by)}V(\e)\,d\e=\infty$
	for some (hence for every) $r>0$. 
	 It is easy to see that as a sufficient condition  this
	criterion follows from Theorem \ref{thcondlebesrear} with $\mathcal{G}_r(\by)=Q_{r_1}(\by-r\vec a)$ 
	($r_1=2r/\sqrt{d}$, $\vec a=(d^{-1/2}, d^{-1/2},\dots,d^{-1/2})$), if one takes
	$\gamma(r)\equiv c$.  
\end{remark}

Consider the covering of the space $\R^d$ by the cubes $Q_1(\vec l)\;(\vec l\in\Z^d)$ and for any $\vec l\in\Z^d$ consider a sequence $\{D_j(\vec l)\}_{j=1}^\infty$ of 
subsets of $Q_1(\vec l)$. Furthermore, for some integers $n>0$ and $m>1$ consider the $m$-adic partition of each cube $Q_1(\vec l)$: $\{Q(\vec\xi,n)\}_{\vec\xi\in\Xi(\vec l,\,n)}$,
where $Q(\vec\xi,n)=Q_{m^{-n}}(\vec\xi)$ and $\Xi(\vec l,\,n)=\{\vec\xi\in m^{-n}\cdot\Z^d\,:\,Q(\vec\xi,n)\subset  Q_1(\vec l)\}$. Denote
\begin{equation}\label{dfXijvecl}
\Xi_j(\vec l,\,n)=\{\vec\xi\in m^{-n}\cdot\Z^d\,:\,Q(\vec\xi,n)\subseteq  D_j(\vec l)\},
\end{equation} 

The following claim is based on the previous claim and on the concept of a $(\log_m,\,\theta)$- dense system (Definition \ref{dfdenslogmtetpart1}):

\begin{theorem}\label{prlogmdensdoubrearr}[\cite{Zel1}, Theorem 3.14]
	Suppose that $\theta\in(0,1)$ and for each $\vec l\in\Z^d$ the sequence $\{D_j(\vec l)\}_{j=1}^\infty$ forms a $(\log_m,\,\theta)$- dense system in $Q_1(\vec l)$. Furthermore,
	suppose that 
	\begin{equation}\label{Xinonempty2}
	\forall\;\vec l\in\Z^d,\;j\in\N,\;:\quad \Xi_{j}(\vec l,j)\neq\emptyset. 
	\end{equation}
 Let $\gamma(r)$ be a non-decreasing monotone function 
	satisfying condition \eqref{cndtildgam}. If for 
	any natural $n$ the condition 
	\begin{equation}\label{cndlogdensdoubrearrprev}
	\lim_{|\vec l|\rightarrow\infty}\;\min_{\vec\xi\in\bigcup_{j=1}^n\Xi_{j}(\vec l,\,n)} V^\star\big(\psi(n);\,Q(\vec\xi,n)\big)=\infty
	\end{equation}
	is satisfied with $\psi(n)=\gamma(m^{-n})\mathrm{mes}_d(Q(\vec 0,n))$, then the spectrum of the operator $H=-\Delta+V(\e)\cdot$ is discrete.
\end{theorem}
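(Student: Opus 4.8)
The plan is to verify condition \eqref{cndlebesrear} of Theorem \ref{thcondlebesrear}, applied with the star-shaped domain $\mathcal{G}_r(\by)$ specialized to the cube $Q_r(\by)$, and then to invoke that theorem. The crucial flexibility is that Theorem \ref{thcondlebesrear} lets me use \emph{any} weight function — call it $\tilde\gamma(r)$ — subject to \eqref{cndtildgam}; it need not be the $\gamma$ fixed in the present statement (this is the freedom reflected in Remark \ref{remequivcond}). So it suffices to construct such a $\tilde\gamma$ and show $\lim_{|\by|\to\infty} V^\star\big(\tilde\gamma(r)\,\mathrm{mes}_d(Q_r(0)),\by,r\big)=\infty$ for all small $r$.

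Fix a small $r$ and a large $|\by|$. First I would pass to a single unit cube: since $r<1$, the cube $Q_r(\by)$ meets at most $2^d$ of the cubes $Q_1(\vec l)$, and choosing in each coordinate the longer of the two subintervals cut out by the integer lattice yields a sub-cube $Q_{r/2}(\bz)\subseteq Q_r(\by)$ contained in one $Q_1(\vec l)$, with $|\vec l|\to\infty$ as $|\by|\to\infty$. Then I would apply the $(\log_m,\theta)$-density of $\{D_j(\vec l)\}$ (Definition \ref{dfdenslogmtetpart1}) to $Q_{r/2}(\bz)$: this produces an index $j\le[\log_m(2/(\theta r))]$ and a cube $Q_{\theta r/2}(\bs)\subseteq D_j(\vec l)\cap Q_{r/2}(\bz)$. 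Choosing $n=n(r)$ minimal with $m^{-n}\le\theta r/4$, a level-$n$ $m$-adic cube $Q(\vec\xi,n)$ fits inside $Q_{\theta r/2}(\bs)$, hence $Q(\vec\xi,n)\subseteq D_j(\vec l)$, i.e. $\vec\xi\in\Xi_j(\vec l,n)$ by \eqref{dfXijvecl}; a short size comparison gives $j\le n$, so $\vec\xi\in\bigcup_{j'=1}^n\Xi_{j'}(\vec l,n)$.

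Since $Q(\vec\xi,n)\subseteq Q_r(\by)$, the defining formula \eqref{dfSVyrdel} makes the non-increasing rearrangement monotone under inclusion, whence $V^\star(\psi(n),\by,r)\ge V^\star\big(\psi(n);Q(\vec\xi,n)\big)$; the right-hand side is one of the terms in the minimum in \eqref{cndlogdensdoubrearrprev} and therefore tends to $\infty$ as $|\vec l|\to\infty$, i.e. as $|\by|\to\infty$. Now I set $\tilde\gamma(r):=\psi(n(r))/r^d=\gamma(m^{-n(r)})\,m^{-n(r)d}/r^d$, so that $\psi(n(r))=\tilde\gamma(r)\,\mathrm{mes}_d(Q_r(0))$; the limit just obtained is exactly condition \eqref{cndlebesrear} for this weight.

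It remains to check \eqref{cndtildgam}. The bound $\tilde\gamma(r)\in(0,1)$ is immediate from $m^{-n(r)}\le r/4$ and $\gamma<1$. The step I expect to be the main obstacle is the $\limsup$ requirement. Writing $\rho=m^{-n(r)}$ and using $r\asymp\rho/\theta$, one computes that $r^{-2d/(d-2)}\tilde\gamma(r)$ is, up to the constant factor $(\theta/4)^{d^2/(d-2)}$, comparable to $\rho^{-2d/(d-2)}\gamma(\rho)$ evaluated along the discrete geometric sequence $\rho=m^{-n}$. The hypothesis supplies $\limsup_{\rho\downarrow 0}\rho^{-2d/(d-2)}\gamma(\rho)=\infty$ only over the continuum, so I must transfer it to $\{m^{-n}\}$; this is precisely where the monotonicity of $\gamma$ enters essentially, via the estimates $\gamma(\rho)\le\gamma(m^{-n})$ and $\rho^{-2d/(d-2)}\le m^{2d/(d-2)}(m^{-n})^{-2d/(d-2)}$ for $\rho\in[m^{-(n+1)},m^{-n}]$, which force $\limsup_n (m^{-n})^{-2d/(d-2)}\gamma(m^{-n})=\infty$. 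With \eqref{cndtildgam} verified, Theorem \ref{thcondlebesrear} gives discreteness of the spectrum of $H$.
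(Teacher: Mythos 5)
Your argument is correct and follows essentially the same route as the paper's proof: pass from $Q_r(\by)$ to a sub-cube inside a single $Q_1(\vec l)$, use the $(\log_m,\theta)$-density to locate an $m$-adic cube of $D_j(\vec l)$ inside $Q_r(\by)$, apply monotonicity of the non-increasing rearrangement under inclusion, and feed the resulting weight $\tilde\gamma$ into Theorem \ref{thcondlebesrear}. The only differences are that you reprove inline what the paper cites as Lemmas \ref{prestbarVstarbelow} and \ref{lminclrear2} from \cite{Zel1}, and that you spell out the transfer of the $\limsup$ condition \eqref{cndtildgam} to the geometric sequence $\{m^{-n}\}$ via the monotonicity of $\gamma$ — a point the paper's proof passes over in a single sentence.
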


\subsection{Lagrangian relaxation}

Let us return to Problem \ref{extrprobl} and assume that  $(X,\,\Sigma,\,\mu)$ is a probability space, i.e. $\mu(X)=1$.  We shall estimate from below the value $I_W(t,\,X,\,\mu)$, defined by \eqref{dfIVOm}, using the method of Lagrangian relaxation (\cite{YRMR}). Before formulation of the results let us introduce some notations. Consider the expectation $\mathrm{E}(W)$ of the function $W(x)$: 
\begin{equation}\label{dfedxpec}
\mathrm{E}(W)=\int_XW(x)\mu(dx)
\end{equation}
 and its (standard) deviation 
\begin{eqnarray}\label{dfVar}
&&\mathrm{Dev}(W)=\sqrt{E\big((W-\mathrm{E}(W))^2\big)}=\sqrt{\int_X\big(W(x)-\mathrm{E}(W)\big)^2\mu(dx)}=\nonumber\\
&&\sqrt{\mathrm{E}(W^2)-\big(\mathrm{E}(W)\big)^2}, 
\end{eqnarray}
assuming that  $W\in L_2(X,\mu)$.
The following claim is valid:
\begin{theorem}\label{thestLagr}
	If $t\in(1/2,1)$ and $\sigma(t)=2t-1$, then
\begin{equation}\label{estLagr}
I_W(t,\,X,\,\mu)\ge(1/2)(1+\sigma(t))\mathrm{E}(W)-(1/2)\sqrt{1-\sigma^2(t)}\cdot\mathrm{Dev}(W)
\end{equation}
\end{theorem}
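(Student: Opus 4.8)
The plan is to estimate $I_W(t,X,\mu)$ from below by the weak duality of Lagrangian relaxation (in the spirit of \cite{YRMR}) applied to the problem in the form of Remark \ref{rembinprob}, and then to collapse the resulting dual function to the two moments $\mathrm{E}(W)$ and $\mathrm{Dev}(W)$ by a single Cauchy--Schwarz step. First I would attach a multiplier $\lambda\ge 0$ to the constraint $\int_X\chi_E\,\mu(dx)\ge t$. For any admissible $E$ (so $\mu(E)\ge t$) and any $\lambda\ge 0$ the quantity $\lambda\big(t-\mu(E)\big)$ is $\le 0$, whence
\begin{equation*}
\int_E W\,\mu(dx)\ \ge\ \lambda t+\int_X\big(W(x)-\lambda\big)\chi_E(x)\,\mu(dx)\ \ge\ \lambda t+\int_X\min\big(W(x)-\lambda,\,0\big)\,\mu(dx),
\end{equation*}
where the last inequality holds pointwise because $\chi_E\in\{0,1\}$ forces $(W-\lambda)\chi_E\ge\min(W-\lambda,0)$, with equality when $E=\{W<\lambda\}$ (so the inner infimum is attained by thresholding). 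Setting $(a)^{+}:=\max(a,0)$ and using $W\in L_2\subset L_1$ to guarantee integrability, taking the infimum over admissible $E$ gives the dual bound
\begin{equation*}
I_W(t,X,\mu)\ \ge\ D(\lambda):=\lambda t-\int_X\big(\lambda-W(x)\big)^{+}\,\mu(dx)\qquad(\lambda\ge 0).
\end{equation*}

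Next I would control $D(\lambda)$ by the moments of $W$. Put $Y:=\lambda-W$, so that $\mathrm{E}(Y)=\lambda-\mathrm{E}(W)$ and the variance of $Y$ equals $\mathrm{Dev}(W)^2$. From the identity $Y^{+}=\tfrac12(Y+|Y|)$ together with the Lyapunov (Cauchy--Schwarz) inequality $\mathrm{E}|Y|\le\sqrt{\mathrm{E}(Y^2)}=\sqrt{(\lambda-\mathrm{E}(W))^2+\mathrm{Dev}(W)^2}$ I obtain
\begin{equation*}
\int_X(\lambda-W)^{+}\,\mu(dx)=\mathrm{E}(Y^{+})\le\tfrac12\Big(\lambda-\mathrm{E}(W)+\sqrt{(\lambda-\mathrm{E}(W))^2+\mathrm{Dev}(W)^2}\Big),
\end{equation*}
and hence, writing $\sigma=\sigma(t)=2t-1$ so that $t-\tfrac12=\tfrac{\sigma}{2}$,
\begin{equation*}
D(\lambda)\ \ge\ \phi(\lambda):=\tfrac{\sigma}{2}\,\lambda+\tfrac12\mathrm{E}(W)-\tfrac12\sqrt{(\lambda-\mathrm{E}(W))^2+\mathrm{Dev}(W)^2},
\end{equation*}
a concave function of $\lambda$.

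Finally I would maximize $\phi$. Solving $\phi'(\lambda)=0$, i.e. $(\lambda-\mathrm{E}(W))/\sqrt{(\lambda-\mathrm{E}(W))^2+\mathrm{Dev}(W)^2}=\sigma$, which is solvable precisely because $\sigma\in(0,1)$ for $t\in(1/2,1)$, gives $\lambda_0=\mathrm{E}(W)+\sigma\,\mathrm{Dev}(W)/\sqrt{1-\sigma^2}$ and $\sqrt{(\lambda_0-\mathrm{E}(W))^2+\mathrm{Dev}(W)^2}=\mathrm{Dev}(W)/\sqrt{1-\sigma^2}$; substituting and using $(\sigma^2-1)/\sqrt{1-\sigma^2}=-\sqrt{1-\sigma^2}$ yields $\phi(\lambda_0)=\tfrac12(1+\sigma)\mathrm{E}(W)-\tfrac12\sqrt{1-\sigma^2}\,\mathrm{Dev}(W)$, which is exactly the asserted estimate \eqref{estLagr}. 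The step requiring genuine care — and which I expect to be the main obstacle — is the admissibility of the optimal multiplier: weak duality was invoked only for $\lambda\ge0$, so one must verify $\lambda_0\ge0$, equivalently $\mathrm{E}(W)\ge-\sigma\,\mathrm{Dev}(W)/\sqrt{1-\sigma^2}$. This holds in particular for the nonnegative potentials of the intended application (and whenever $\mathrm{E}(W)\ge0$); if it fails, the supremum of $D$ over $\lambda\ge0$ is attained at $\lambda=0$ and the bound must be reexamined, so this sign condition is the delicate point to pin down, together with the verification that the thresholding set indeed realizes the inner infimum so that $D(\lambda)$ is the true dual value.
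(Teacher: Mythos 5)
Your proof is correct and reaches \eqref{estLagr}, but by a genuinely different route from the paper's. The paper passes to $u=2\chi_E-1$, relaxes the binary constraint to $u^2\le 1$, and introduces \emph{two} multipliers (a constant $\lambda>0$ for $1-u^2\ge 0$ and $\nu>0$ for $\int_X u\,\mu(dx)\ge\sigma(t)$); its inner infimum is then a strictly convex quadratic functional minimized at $u_\star=(\nu-W)/(2\lambda)$, and the dual function $m(\lambda,\nu)$ is maximized via a Hessian computation. You instead keep a single multiplier for the measure constraint, compute the inner infimum over indicator functions exactly by thresholding ($E=\{W<\lambda\}$), and then reduce $\int_X(\lambda-W)^{+}\,\mu(dx)$ to the two moments via $Y^{+}=\tfrac12(Y+|Y|)$ and $\mathrm{E}|Y|\le\sqrt{\mathrm{E}(Y^2)}$. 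Your version is more elementary (no Gateaux differentials, no Hessian analysis) and makes transparent where sharpness is lost (only in the single Cauchy--Schwarz step); the one-variable maximization of $\phi$ is routine and your algebra checks out, giving exactly $t\,\mathrm{E}(W)-\sqrt{t(1-t)}\,\mathrm{Dev}(W)$, which is \eqref{estLagr}.

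The sign condition you flag at the end is a genuine issue, and not only for your argument: weak duality needs $\lambda_0=\mathrm{E}(W)+\sigma\,\mathrm{Dev}(W)/\sqrt{1-\sigma^2}\ge 0$, and when $\mathrm{E}(W)$ is sufficiently negative the asserted inequality is in fact false as stated for general $W\in L_2(X,\mu)$ (cf.\ Remark \ref{remonVge0}): for $W\equiv -c$ with $c>0$ one has $I_W(t,\,X,\,\mu)=-c$ by taking $E=X$, while the right-hand side of \eqref{estLagr} equals $-tc>-c$. The paper's own proof leaves the same point untreated (its critical point $\nu_\star=\mathrm{E}(W)+2\lambda_\star\sigma(t)$ need not lie in the domain $\nu>0$, and its closing remark that $\mathrm{Dev}(W)=0$ implies $I_W\ge t\,\mathrm{E}(W)$ fails for negative $\mathrm{E}(W)$). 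Since the theorem is only invoked with $W=V\ge 0$ (Theorem \ref{thdiscLagr}), where $\lambda_0\ge 0$ holds automatically, your proof is complete for every use made of the statement; you should simply record the hypothesis $\mathrm{E}(W)\ge 0$ explicitly.
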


Consider the probability space $\big(\mathcal{G}_r(\by),\Sigma_L(\mathcal{G}_r(\by)),m_{d,r}\big)$,
where $m_{d,r}$ is the normalized Lebesgue measure
\begin{equation}\label{dfmdr}
m_{d,r}(A):=\frac{\mathrm{mes}_d(A)}{\mathrm{mes}_d({\mathcal G}_r(\by))}\quad(A\in\Sigma_L(\bar {\mathcal G}_r(\by))).
\end{equation}
 Denote by $\mathrm{E}_{\by,r}(W)$ and $\mathrm{Dev}_{\by,r}(W)$ the expectation and deviation of a real-valued  function $W\in L_2(\mathcal{G}_r(\by),\, m_{d,r})$. 
 
The following claim is based on Theorem \ref{thestLagr}:
\begin{theorem}\label{thdiscLagr}
Suppose that  $V(\e)\ge 0$,
$V\in L_{2,loc}(\R^d)$ and the condition 
\begin{equation}\label{cnddisclarg}
\lim_{|\by|\rightarrow\infty}\Big(\mathrm{E}_{\by,r}(V)-\sqrt{\gamma(r)}\cdot\mathrm{Dev}_{\by,r}(V)\Big) =+\infty
\end{equation}
is satisfied for some $r_0>0$ and any $r\in(0,r_0)$, where $\gamma(r)$ satisfies the condition \eqref{cndtildgam}. Then the spectrum of the operator
$H=-\Delta+V(\e)$ is discrete.
\end{theorem}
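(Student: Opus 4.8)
The plan is to deduce Theorem \ref{thdiscLagr} from Theorem \ref{thcondlebesgue} by bounding the infimum in condition \eqref{cndlebesgue} from below through the Lagrangian-relaxation estimate \eqref{estLagr} of Theorem \ref{thestLagr}, applied on each domain $\mathcal{G}_r(\by)$ equipped with the normalized Lebesgue measure $m_{d,r}$.

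First I would identify the functional in \eqref{cndlebesgue} with a multiple of the value $I_V$ of Problem \ref{extrprobl}. Writing $E=\mathcal{G}_r(\by)\setminus F$, the constraint $F\in\mathcal{M}_{\tilde\gamma(r)}(\by,r)$ reads $\mathrm{mes}_d(E)\ge(1-\tilde\gamma(r))\,\mathrm{mes}_d(\mathcal{G}_r(\by))$, i.e. $m_{d,r}(E)\ge 1-\tilde\gamma(r)$ after the normalization \eqref{dfmdr}; since $\int_E V\,\mathrm{d}\e=\mathrm{mes}_d(\mathcal{G}_r(\by))\int_E V\,m_{d,r}(\mathrm{d}\e)$ and Borel sets form a subclass of the measurable sets admissible in Problem \ref{extrprobl}, this gives
\begin{equation*}
\inf_{F\in\mathcal{M}_{\tilde\gamma(r)}(\by,r)}\int_{\mathcal{G}_r(\by)\setminus F}V(\e)\,\mathrm{d}\e\ge\mathrm{mes}_d(\mathcal{G}_r(\by))\,I_V\big(1-\tilde\gamma(r),\,\mathcal{G}_r(\by),\,m_{d,r}\big),
\end{equation*}
where $\tilde\gamma(r)$ is an auxiliary density to be chosen below (in fact equality holds, but only this inequality is needed). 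Since $\mathrm{mes}_d(\mathcal{G}_r(\by))$ is a positive constant independent of $\by$, it suffices to prove $I_V\big(1-\tilde\gamma(r),\mathcal{G}_r(\by),m_{d,r}\big)\to\infty$ as $|\by|\to\infty$.

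Next I would apply Theorem \ref{thestLagr} on the probability space $(\mathcal{G}_r(\by),\Sigma_L(\mathcal{G}_r(\by)),m_{d,r})$ with $W=V$ (finiteness of the deviation is guaranteed by $V\in L_{2,loc}$ together with boundedness of $\mathcal{G}_r(\by)$) and with $t=1-\tilde\gamma(r)$. Taking $\tilde\gamma(r)\in(0,1/2)$ ensures $t\in(1/2,1)$, and then $\sigma(t)=2t-1=1-2\tilde\gamma(r)$, so $\tfrac12(1+\sigma(t))=1-\tilde\gamma(r)$ and $\tfrac12\sqrt{1-\sigma^2(t)}=\sqrt{\tilde\gamma(r)(1-\tilde\gamma(r))}$. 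Hence \eqref{estLagr} becomes, after factoring out $1-\tilde\gamma(r)$,
\begin{equation*}
I_V\big(1-\tilde\gamma(r),\mathcal{G}_r(\by),m_{d,r}\big)\ge(1-\tilde\gamma(r))\Big(\mathrm{E}_{\by,r}(V)-\sqrt{\tfrac{\tilde\gamma(r)}{1-\tilde\gamma(r)}}\,\mathrm{Dev}_{\by,r}(V)\Big).
\end{equation*}

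The decisive step, which I expect to be the main obstacle, is the choice of $\tilde\gamma(r)$ matching the coefficient $\sqrt{\gamma(r)}$ of hypothesis \eqref{cnddisclarg}. The naive choice $\tilde\gamma(r)=\gamma(r)$ fails, since it produces the coefficient $\sqrt{\gamma(r)/(1-\gamma(r))}>\sqrt{\gamma(r)}$ on the deviation and so points the inequality the wrong way. Instead I would set $\tilde\gamma(r)=\gamma(r)/(1+\gamma(r))$, for which $\tilde\gamma(r)/(1-\tilde\gamma(r))=\gamma(r)$ and $1-\tilde\gamma(r)=1/(1+\gamma(r))$ exactly, so the last display reduces to
\begin{equation*}
I_V\big(1-\tilde\gamma(r),\mathcal{G}_r(\by),m_{d,r}\big)\ge\frac{1}{1+\gamma(r)}\Big(\mathrm{E}_{\by,r}(V)-\sqrt{\gamma(r)}\,\mathrm{Dev}_{\by,r}(V)\Big).
\end{equation*}
Finally I would verify that this $\tilde\gamma(r)$ is admissible for Theorem \ref{thcondlebesgue}: from $\gamma(r)\in(0,1)$ one gets $\tilde\gamma(r)\in(0,1/2)\subset(0,1)$, and from $1+\gamma(r)<2$ one gets $\tilde\gamma(r)\ge\gamma(r)/2$, whence $\limsup_{r\downarrow0}r^{-2d/(d-2)}\tilde\gamma(r)\ge\tfrac12\limsup_{r\downarrow0}r^{-2d/(d-2)}\gamma(r)=\infty$, so $\tilde\gamma(r)$ satisfies \eqref{cndtildgam}. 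For fixed $r$ the factor $1/(1+\gamma(r))$ is a positive constant, so \eqref{cnddisclarg} forces the right-hand side to tend to $+\infty$ as $|\by|\to\infty$; thus $I_V\to\infty$, condition \eqref{cndlebesgue} holds with $\tilde\gamma(r)$, and Theorem \ref{thcondlebesgue} yields discreteness of the spectrum of $H$.
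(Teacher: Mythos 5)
Your proof is correct and follows essentially the same route as the paper: both reduce condition \eqref{cnddisclarg} to condition \eqref{cndlebesgue} of Theorem \ref{thcondlebesgue} by applying the Lagrangian-relaxation bound \eqref{estLagr} on $(\mathcal{G}_r(\by),m_{d,r})$ with an adjusted density $\tilde\gamma(r)$ still satisfying \eqref{cndtildgam}. The only (immaterial) difference is the choice of that density: you take $\tilde\gamma(r)=\gamma(r)/(1+\gamma(r))$ so that the deviation coefficient equals $\sqrt{\gamma(r)}$ exactly, whereas the paper takes $\tilde\gamma(r)=\gamma(r)/2$ and uses the bound $\sqrt{\gamma(r)/(2-\gamma(r))}\le\sqrt{\gamma(r)}$.
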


\begin{remark}\label{remLagrimplrear}
We shall see from the proof of Theorem \ref{thdiscLagr} that condition \eqref{cnddisclarg} implies condition \eqref{cndlebesgue} of Theorem \ref{thcondlebesgue}. Hence, in view of Remark \ref{remequivcond}, conditions of Theorem \ref{thdiscLagr} imply conditions of  Theorem \ref{thcondlebesrear}.
\end{remark}

\subsection{Perturbations of the potential}
\label{subsec:perturb}

In this section we shall use  a localization
principle, which was established in \cite{Iwas}  and \cite{K-Sh}  for the case
of the magnetic Schr\"odinger operator, but in particular it is true
also in absence of the magnetic field.
Let $\Omega$ be an open domain in $\R^d$ whose closure is compact.
 We denote by $(\cdot,\cdot)_\Omega$ and $\Vert\cdot\Vert_\Omega$ the
inner product and the norm in the space $L_2(\Omega)$. Consider
the quantities:
\begin{eqnarray*}
&&\lambda_0(\Omega):=\inf_{u\in C_{0}^\infty(\Omega),\,u\neq
	0}\frac{(H u,u)_\Omega}{\Vert u\Vert_\Omega^2}=\nonumber\\
&&\inf_{u\in C_{0}^\infty(\Omega),\,u\neq 0}\frac{\int_\Omega(|\nabla
	u(\e)|^2+V(\e)|u(\e)|^2)\, \mathrm{d}\e}{\int_\Omega|u(\e)|^2\,
	\mathrm{d}\e},
\end{eqnarray*}
\begin{equation*}
\mu_0(\Omega):=\inf_{u\in C^\infty(\Omega),\,u\neq
	0}\frac{(H u,u)_\Omega}{\Vert u\Vert_\Omega^2}
\end{equation*}
In essence $\lambda_0(\Omega)$ is the minimal eigenvalue of the
generalized Dirichlet  boundary problem in the domain $\Omega$:
$Hu=\lambda u\;u\vert_{\partial\Omega}=0$ and $\mu_0(\Omega)$ is the minimal eigenvalue of the generalized Neumann problem for the same equation, i.e., the corresponding boundary condition is $\frac{\partial u}{\partial\vec n}\vert_{\partial\Omega}=0$.  As it is known, the spectra
of these problems are discrete. We denote briefly
$\lambda_0(\by,r)=\lambda_0(Q_r(\by))$ and $\mu_0(\by,r)=\mu_0(Q_r(\by))$
\begin{proposition}\label{prloc}[\cite{K-Sh} Theorems 1.1, 1.2]
	
	\noindent$(i)$ The operator $H$ is bounded below if and only if
	\begin{equation}\label{boundbel}
	\exists\;r>0:\quad
	\liminf_{|\by|\rightarrow\infty}\lambda_0(\by,r)>-\infty;
	\end{equation}
	\vskip2mm
	
	\noindent$(ii)$ The spectrum of $H$ is discrete and bounded below if
	and only if
	\begin{equation*}
	\exists\;r>0:\quad
	\lim_{|\by|\rightarrow\infty}\lambda_0(\by,r)=+\infty;
	\end{equation*}
	\vskip2mm
	
	\noindent$(iii)$ If $V(\e)\ge 0$, the spectrum of $H$ is discrete if
	and only if
	\begin{equation*}
	\exists\;r>0:\quad
	\lim_{|\by|\rightarrow\infty}\mu_0(\by,r)=+\infty.
	\end{equation*}
\end{proposition}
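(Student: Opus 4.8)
The plan is to derive all three equivalences from a single localization identity obtained by applying the (IMS) localization formula to a quadratic partition of unity adapted to the covering of $\R^d$ by the cubes $Q_r(\by)$. Fix $r>0$ and choose real functions $\varphi_{\by}\in C_0^\infty(\R^d)$, indexed by the centers $\by\in r\Z^d$, with uniformly bounded overlap, $\mathrm{supp}\,\varphi_{\by}$ contained in a fixed dilate of $Q_r(\by)$, $\sum_{\by}\varphi_{\by}^2\equiv 1$, and $|\nabla\varphi_{\by}|\le C/r$. Differentiating $\sum_{\by}\varphi_{\by}^2\equiv 1$ kills the cross term $\sum_{\by}\varphi_{\by}\nabla\varphi_{\by}=0$, so for every $u\in C_0^\infty(\R^d)$ one obtains
\begin{equation*}
\int_{\R^d}\big(|\nabla u|^2+V|u|^2\big)\,\mathrm{d}\e=\sum_{\by}\int_{\R^d}\big(|\nabla(\varphi_{\by}u)|^2+V|\varphi_{\by}u|^2\big)\,\mathrm{d}\e-\sum_{\by}\int_{\R^d}|\nabla\varphi_{\by}|^2|u|^2\,\mathrm{d}\e ,
\end{equation*}
in which the last sum is bounded in absolute value by $(C/r^2)\Vert u\Vert^2$ by the finite-overlap property. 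Since ``some $r$'' occurs in the statement, the precise size of the dilate is immaterial and I identify the local scale with $r$ throughout.

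For statement $(i)$ I bound each local term from below by the Dirichlet eigenvalue, $\int(|\nabla(\varphi_{\by}u)|^2+V|\varphi_{\by}u|^2)\ge\lambda_0(\by,r)\Vert\varphi_{\by}u\Vert^2$, which is legitimate because $\varphi_{\by}u$ is compactly supported in the relevant cube; summing and using $\sum_{\by}\Vert\varphi_{\by}u\Vert^2=\Vert u\Vert^2$ yields $(Hu,u)\ge(\inf_{\by}\lambda_0(\by,r)-C/r^2)\Vert u\Vert^2$. If $\liminf_{|\by|\to\infty}\lambda_0(\by,r)>-\infty$, then only finitely many cubes lie below any prescribed level and each local eigenvalue is finite, so $\inf_{\by}\lambda_0(\by,r)>-\infty$ and $H$ is bounded below. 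Conversely, if $\lambda_0(\by_n,r)\to-\infty$ along some $\by_n\to\infty$, I pick near-minimizers supported in the disjoint cubes $Q_r(\by_n)$ to produce unit vectors $u_n$ with $(Hu_n,u_n)\to-\infty$, contradicting boundedness below.

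For statement $(ii)$ the same lower bound gives the $\Leftarrow$ direction: if $\lambda_0(\by,r)\to+\infty$, then for any level $M$ the form exceeds $M\Vert u\Vert^2$ once $u$ is supported outside a sufficiently large ball $B_R(0)$, while on the bounded complementary region the embedding $W_2^1\hookrightarrow L_2$ is compact (Rellich--Kondrachov); together these are exactly the hypotheses under which a semibounded form has discrete spectrum. The $\Rightarrow$ direction is again by contradiction: a sequence $\by_n\to\infty$ with $\lambda_0(\by_n,r)$ bounded produces, via Dirichlet near-minimizers with mutually disjoint supports escaping to infinity, an infinite-dimensional subspace on which $H$ stays below a fixed constant, which is impossible when the spectrum is discrete.

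Finally, for statement $(iii)$ the hypothesis $V\ge 0$ makes $H\ge 0$, so ``bounded below'' is automatic and may be dropped. Since $\mu_0(\by,r)\le\lambda_0(\by,r)$, the implication $\mu_0(\by,r)\to+\infty\Rightarrow\lambda_0(\by,r)\to+\infty$ reduces the $\Leftarrow$ direction to $(ii)$. The substantive direction is $\Rightarrow$, and this is where I expect the real obstacle: one must convert Neumann quasimodes---which live in $C^\infty(Q_r(\by))$ and carry no boundary vanishing---into admissible global trial functions. The plan is to cut them off by the $\varphi_{\by}$, estimate the resulting gradient error by $O(r^{-2})$, and absorb the genuinely new boundary-type contributions using $V\ge 0$; this yields a reverse comparison turning a bounded subsequence of $\mu_0(\by_n,r)$ into an infinite-dimensional form-bounded subspace at infinity, again contradicting discreteness. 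The delicate bookkeeping of the localization error uniformly in $\by$ and, above all, the passage between the Dirichlet and Neumann quadratic forms for $V\ge 0$ are the main points where care is required; the compactness input (Rellich) and the min--max characterization of discrete spectrum are the standard threads tying the estimates to the spectral conclusions.
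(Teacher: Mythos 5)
First, a remark on the comparison you asked for: the paper does not prove this proposition at all --- it is imported verbatim from Kondratiev--Shubin (\cite{K-Sh}, Theorems 1.1, 1.2) and used as a black box, so there is no in-paper argument to measure yours against. Judged on its own terms, your treatment of (i) and (ii) is the standard IMS localization route and is essentially sound: the identity with error term $\sum_{\by}|\nabla\varphi_{\by}|^2|u|^2\le Cr^{-2}\Vert u\Vert^2$, the resulting lower bound $(Hu,u)\ge(\inf_{\by}\lambda_0(\by,r)-Cr^{-2})\Vert u\Vert^2$, disjointly supported Dirichlet near-minimizers for the converse implications, and the Glazman/Persson decomposition principle for discreteness. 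This is also, in substance, how \cite{K-Sh} argue for those two parts.

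The genuine gap is the forward direction of (iii); you have correctly located the obstacle but the fix you sketch does not close it. The plan to ``cut the Neumann quasimodes off by the $\varphi_{\by}$, estimate the gradient error by $O(r^{-2})$, and absorb the boundary-type contributions using $V\ge 0$'' fails for two concrete reasons. First, $\varphi_{\by}$ is supported in a dilate of $Q_r(\by)$, while the Neumann quasimode $v$ is defined only on $\bar Q_r(\by)$; any extension of $v$ beyond the cube (say by reflection) yields a Dirichlet trial function whose potential energy $\int V|\cdot|^2$ over the added region is completely uncontrolled, and because $V\ge 0$ that term cannot be discarded --- nonnegativity works against you here, not for you. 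Second, if you instead truncate inside $Q_r(\by)$, the quasimode may carry almost all of its $L_2$ mass near $\partial Q_r(\by)$ and almost none where the cutoff equals $1$, so the Rayleigh quotient of the truncated function is not bounded by $C(\mu_0(\by,r)+r^{-2})$. The real obstruction is not a boundary term: a low-energy Neumann function can be small precisely on the set where $V$ is large, so no soft cutoff converts it into a low-energy compactly supported trial function. In \cite{K-Sh} this implication is the hard half of the Neumann characterization and requires quantitative input: a Poincar\'e inequality forces the quasimode to be close to its mean once $r^2\mu_0(\by,r)$ is small, and a capacity estimate shows that the exceptional set where it deviates from its mean is negligible, which is what ultimately controls $\int_{Q_r(\by)}V$ and hence the Dirichlet quantity $\lambda_0$. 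Without an ingredient of this kind --- capacity, or at least a quantitative non-concentration statement for the quasimode relative to the bulk of $V$ --- your reduction of (iii) to (ii) does not go through.
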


Let $\Omega$ be an open and bounded domain in $\R^d.$ Following to \cite{M-V}, for $W\in L_1(\Omega)$ and $s\ge 1$ consider the
 set ${\mathcal D}_s(W,\,\Omega)$ of all vector fields $\vec\Gamma(\e)$ on $\Omega$ satisfying the divergence equation $\mathrm{div}\vec\Gamma=W$ and belonging to the space $L_s(\Omega)$.
We assume that  this equation  is
satisfied in the generalized sense (see \eqref{defgensol}). By Propositions 
\ref{prdiv} and \eqref{prdiveq} ${\mathcal D}_s(W,\,\Omega)\neq\emptyset$ under some  additional conditions.
Denote
\begin{equation}\label{defDsW}
D_s(W,\Omega)=\inf_{\vec\Gamma\in{\mathcal D}_s(W,\,\Omega)}\Vert\vec\Gamma\Vert_s,
\end{equation}
In the case where $\Omega=Q_r(\by)$ we shall write briefly 
$D_s(W,\by,r)$.
Consider the constant:
\begin{equation}\label{dfC}
C(d)=\sqrt{\frac{1}{\pi
		d(d-2)}}\Big(\frac{\Gamma(d)}{\Gamma(d/2)}\Big)^{1/d},
\end{equation}

The following claim on perturbation of Schr\"odinger operator, preserving discreteness of its spectrum, is valid:

\begin{theorem}\label{thperturb}
Suppose that the potential $V(\e)$ 
 permit the representation
$V(\e)=V_0(\e)+W(\e)$ such that $V_0\in L_{1,loc}(\R^d)$
and for some $r_0>0$ the condition 
\begin{equation}\label{cndpert}
\bar
D(r_0,W)=\limsup_{|\by|\rightarrow\infty}D_d(W,\by,r_0)<
1/(2C(d)),
\end{equation}
is satisfied.

\noindent (i) If for some 
\begin{equation}\label{sigmain}
\sigma\in[]2C(d)\bar D(r_0,W),\,1)
\end{equation}
the operator $H_\sigma=-(1-\sigma)\Delta+V_0(\e)\cdot$ is bounded below,
then so does operator $H=-\Delta+V(\e)\cdot$ ;
\vskip2mm

\noindent (ii)  If additionally the spectrum of operator $H_\sigma$ is
discrete, then so does the spectrum of operator $H$.
\end{theorem}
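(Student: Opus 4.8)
The plan is to turn the hypothesis on $D_d(W,\by,r_0)$ into a pointwise comparison of the Dirichlet quadratic forms of $H$ and $H_\sigma=-(1-\sigma)\Delta+V_0(\e)\cdot$ on the cubes $Q_{r_0}(\by)$ as $|\by|\to\infty$, and then to transfer boundedness below and discreteness from $H_\sigma$ to $H$ by the localization principle (Proposition \ref{prloc}). The engine is estimate \eqref{estintlor1} of Proposition \ref{lmest}(ii). First I would recall how it arises: for $u\in C_0^\infty(Q_{r_0}(\by))$ and any $\vec\Gamma\in\Dc_d(W,Q_{r_0}(\by))$, integrating $\mathrm{div}\,\vec\Gamma=W$ by parts (no boundary term, since $u$ has compact support) and using $\nabla(|u|^2)=2\mathrm{Re}(\bar u\,\nabla u)$ gives
\begin{equation*}
\Big|\int_{Q_{r_0}(\by)}W(\e)|u(\e)|^2\,\mathrm{d}\e\Big|=2\Big|\mathrm{Re}\int_{Q_{r_0}(\by)}(\vec\Gamma\cdot\nabla u)\,\bar u\,\mathrm{d}\e\Big|\le 2\Vert\vec\Gamma\Vert_d\,\Vert\nabla u\Vert_2\,\Vert u\Vert_{2d/(d-2)},
\end{equation*}
by H\"older with exponents $d,2,2d/(d-2)$. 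The Sobolev inequality $\Vert u\Vert_{2d/(d-2)}\le C(d)\Vert\nabla u\Vert_2$, whose sharp constant is exactly \eqref{dfC}, together with the infimum over $\vec\Gamma$, yields $\big|\int_{Q_{r_0}(\by)}W|u|^2\,\mathrm{d}\e\big|\le 2C(d)\,D_d(W,\by,r_0)\,\Vert\nabla u\Vert_2^2$.

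Next I would fix $\sigma$ as in \eqref{sigmain} and write $\lambda_0^{\sigma}(\by,r):=\inf_{u\in C_0^\infty(Q_r(\by)),\,u\neq 0}(H_\sigma u,u)_{Q_r(\by)}/\Vert u\Vert_{Q_r(\by)}^2$. Since $\sigma\ge 2C(d)\bar D(r_0,W)=2C(d)\limsup_{|\by|\to\infty}D_d(W,\by,r_0)$, for all sufficiently large $|\by|$ one has $2C(d)D_d(W,\by,r_0)\le\sigma$, so the estimate above gives $\int_{Q_{r_0}(\by)}W|u|^2\,\mathrm{d}\e\ge-\sigma\Vert\nabla u\Vert_2^2$ and hence
\begin{equation*}
(Hu,u)_{Q_{r_0}(\by)}=\Vert\nabla u\Vert_2^2+\int_{Q_{r_0}(\by)}(V_0+W)|u|^2\,\mathrm{d}\e\ge(1-\sigma)\Vert\nabla u\Vert_2^2+\int_{Q_{r_0}(\by)}V_0|u|^2\,\mathrm{d}\e=(H_\sigma u,u)_{Q_{r_0}(\by)}
\end{equation*}
for every $u\in C_0^\infty(Q_{r_0}(\by))$. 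Dividing by $\Vert u\Vert^2$ and taking the infimum over $u$ gives the key comparison $\lambda_0(\by,r_0)\ge\lambda_0^{\sigma}(\by,r_0)$ for all large $|\by|$.

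Then I would transfer. For (i): if $H_\sigma$ is bounded below, say $(H_\sigma u,u)_{\R^d}\ge -M\Vert u\Vert^2$ on $C_0^\infty(\R^d)$, then restricting test functions to a cube only raises the infimum, so $\lambda_0^\sigma(\by,r_0)\ge -M$ for every $\by$; with the comparison, $\lambda_0(\by,r_0)\ge -M$ for all large $|\by|$, whence $\liminf_{|\by|\to\infty}\lambda_0(\by,r_0)>-\infty$ and $H$ is bounded below by Proposition \ref{prloc}(i). For (ii): if moreover the spectrum of $H_\sigma$ is discrete, then (after rescaling the principal part so that Proposition \ref{prloc} applies to $H_\sigma$, valid at the scale $r_0$) $\lim_{|\by|\to\infty}\lambda_0^\sigma(\by,r_0)=+\infty$; the comparison forces $\lim_{|\by|\to\infty}\lambda_0(\by,r_0)=+\infty$, and Proposition \ref{prloc}(ii) then gives discreteness of the spectrum of $H$.

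The step I expect to be the main obstacle is the \emph{uniformity} in the quadratic-form comparison: the bound $\lambda_0(\by,r_0)\ge\lambda_0^\sigma(\by,r_0)$ must hold simultaneously for all far-away cubes, and this rests on $2C(d)D_d(W,\by,r_0)\le\sigma$ holding for \emph{all} large $|\by|$. When $\sigma>2C(d)\bar D(r_0,W)$ this is immediate from the definition of the upper limit, but the closed left endpoint $\sigma=2C(d)\bar D(r_0,W)$ is genuinely delicate, since a limsup need not be an eventual upper bound; note that the monotonicity $H_{\sigma'}\ge H_{\sigma}$ for $\sigma'<\sigma$ runs the wrong way for a naive approximation, so the endpoint should be reached by a more careful limiting argument exploiting that the positive part of $2C(d)D_d(W,\by,r_0)-\sigma$ tends to $0$. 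A secondary technical point is the use of Proposition \ref{prloc} at the specific scale $r_0$: since the principle is stated ``for some $r$'', one must invoke the scale-independence of the criterion $\lim_{|\by|\to\infty}\lambda_0(\by,r)=+\infty$, which follows from the monotonicity of $\lambda_0(\by,r)$ in $r$.
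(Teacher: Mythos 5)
Your proposal is correct and follows essentially the same route as the paper: estimate \eqref{estintlor1} of Proposition \ref{lmest}(ii), applied to a near-optimal field in $\Dc_d(W,Q_{r_0}(\by))$, gives $(Hu,u)_{Q_{r_0}(\by)}\ge(H_\sigma u,u)_{Q_{r_0}(\by)}$ for all large $|\by|$, and the localization principle (Proposition \ref{prloc}) then transfers boundedness below and discreteness from $H_\sigma$ to $H$. The endpoint subtlety you flag at $\sigma=2C(d)\bar D(r_0,W)$ is genuine but is equally present in the paper's own proof, which simply asserts that for all large $|\by|$ one can choose $\vec\Gamma_{\by,r_0}$ with $\Vert\vec\Gamma_{\by,r_0}\Vert_d\le\sigma/(2C(d))$ --- a step that is immediate only when the inequality in \eqref{sigmain} is strict.
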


In the next claim we shall use the Fourier transform $\mathcal{F}(f)$  on $\T^d$ of a function $f\in L_2(\T^d)$.

 \begin{theorem}\label{thFour}
Suppose that  $V\in L_{2,\,loc}(\R^d)$ and 
\begin{equation}\label{cndtransfour}
\limsup_{|\vec l|\rightarrow\infty}\Big\Vert \frac{\mathcal{F}\big(W_{\vec l}\big)(\vec k)}{2\pi i|\vec k|^2}\vec k\Big\Vert_{l_{d^\prime}}<\frac{1}{d2^{d+1}C(d)}\quad (\vec l\in\Z^d),
\end{equation}
where $d^\prime=d/(d-1)$ and $W_{\vec l}(\e)\,(\vec l\in\Z^d)$ is the $1$-periodic continuation (by all the variables) of the function $\big(V(\e)-\int_{Q_1(\vec l)}V(\bs)\,\mathrm{d}\bs\big)\vert_{Q_1(\vec l)}$ from the cube $Q_1(\vec l)$ to the whole $\R^d$. Then the spectrum of the operator $H$ is discrete and bounded below, if
\begin{equation}\label{intQitendinfty}
\lim_{|\vec l|\rightarrow\infty}\int_{Q_1(\vec l)}V(\bs)\,\mathrm{d}\bs=+\infty.
\end{equation}
\end{theorem}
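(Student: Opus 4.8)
The plan is to derive Theorem~\ref{thFour} from the perturbation result Theorem~\ref{thperturb} by using exactly the splitting dictated by the statement. I would set $V_0$ to be the piecewise-constant potential equal to $\int_{Q_1(\vec l)}V(\bs)\,\mathrm{d}\bs$ on each cube $Q_1(\vec l)$, and $W=V-V_0$. By construction $W|_{Q_1(\vec l)}$ is the restriction of $W_{\vec l}$, it has vanishing mean over every integer cube, and $V_0\ge 0$. Moreover hypothesis \eqref{intQitendinfty} says precisely that the value of $V_0$ on $Q_1(\vec l)$ tends to $+\infty$ as $|\vec l|\to\infty$. Granting for the moment that $\bar D(1,W)<1/(2C(d))$, I fix $\sigma\in[2C(d)\bar D(1,W),\,1)$, so that $1-\sigma>0$, and the operator $H_\sigma=-(1-\sigma)\Delta+V_0$ is a well-defined scaled Schr\"odinger operator.

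Next I would check that $H_\sigma$ has discrete spectrum and is bounded below. Since $V_0\ge 0$ is bounded on bounded sets and $V_0(\e)\to+\infty$ as $|\e|\to\infty$, the operator $-(1-\sigma)\Delta+V_0$ has compact resolvent; this is classical, and it can also be read off from Theorem~\ref{thMazSh} applied to the potential $V_0$, for which \eqref{cndmolch1} holds trivially because $\int_{\mathcal G_r(\by)}V_0\to\infty$ while one may take $F=\emptyset$. In particular $H_\sigma$ is bounded below with discrete spectrum, so both hypotheses of Theorem~\ref{thperturb}(i)--(ii) about $H_\sigma$ are met.

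The decisive step is to bound $D_d(W,\by,1)=\inf_{\vec\Gamma}\Vert\vec\Gamma\Vert_{L_d(Q_1(\by))}$ over solutions of $\mathrm{div}\,\vec\Gamma=W$, and to show it stays below $1/(2C(d))$ in the limit. For an integer shift $\by=\vec l$ this is clean: since $W=W_{\vec l}$ is $1$-periodic with zero mean, Proposition~\ref{prdiveq} provides the periodic field $\vec\Gamma_{\vec l}$ with Fourier coefficients $\frac{\mathcal{F}(W_{\vec l})(\vec k)}{2\pi i|\vec k|^2}\vec k$ solving $\mathrm{div}\,\vec\Gamma_{\vec l}=W_{\vec l}$ on $\T^d$. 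By the Hausdorff--Young inequality on the torus (here $d>2$, with dual exponent $d^\prime=d/(d-1)\le 2$) one gets $\Vert\vec\Gamma_{\vec l}\Vert_{L_d(Q_1(\vec l))}\le\Vert\vec\Gamma_{\vec l}\Vert_{L_d(\T^d)}\le\big\Vert\frac{\mathcal{F}(W_{\vec l})(\vec k)}{2\pi i|\vec k|^2}\vec k\big\Vert_{l_{d^\prime}}$, which is exactly the quantity controlled by \eqref{cndtransfour}. Hence $D_d(W,\vec l,1)<\frac{1}{d2^{d+1}C(d)}$ for all large $|\vec l|$.

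The main obstacle is to upgrade this from integer shifts to arbitrary $\by$, as \eqref{cndpert} demands. A generic unit cube $Q_1(\by)$ meets up to $2^d$ integer cubes $Q_1(\vec l_1),\dots,Q_1(\vec l_{2^d})$, on each of which $W$ agrees with a different periodic function $W_{\vec l_i}$; thus $W$ is not periodic on $Q_1(\by)$ and its mean there need not vanish. I would construct a solution on $Q_1(\by)$ by patching the periodic fields $\vec\Gamma_{\vec l_i}$ with cutoffs and adding correction terms that repair $\mathrm{div}\,\vec\Gamma=W$ across the interfaces between adjacent integer cubes. Summing the $2^d$ pieces yields the factor $2^d$, while controlling the interface corrections in $L_d$ costs at most a further factor $d$; combined with the strict bound of \eqref{cndtransfour} this gives $\bar D(1,W)\le \frac{2^d}{d2^{d+1}C(d)}\cdot d<\frac{1}{2C(d)}$. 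Carrying out this patching so that the divergence equation still holds in the generalized sense \eqref{defgensol} while each correction's $L_d$-norm stays within this budget is the delicate part, and it is what pins down the constant $d2^{d+1}$ in \eqref{cndtransfour}. Once $\bar D(1,W)<1/(2C(d))$ is in hand, Theorem~\ref{thperturb}(i)--(ii), applied with $r_0=1$ and the chosen $\sigma$, shows that $H=-\Delta+V$ is bounded below with discrete spectrum, as claimed.
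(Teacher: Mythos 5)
Your overall strategy coincides with the paper's: the same splitting $V=V_0+W$ with $V_0$ the step function of cube averages, discreteness of $H_\sigma=-(1-\sigma)\Delta+V_0$ from $V_0(\e)\to+\infty$, Proposition \ref{prdiveq} to produce the periodic potential solutions $\vec\Gamma_{\vec l}$ on the integer cubes, the count of at most $2^d$ integer cubes met by a small cube, and finally Theorem \ref{thperturb}. Two points of bookkeeping differ from the paper. First, the paper takes $r_0\in(0,1)$ rather than $r_0=1$, and it does not patch anything: it simply uses the periodic solutions on each of the at most $2^d$ integer cubes that $Q_{r_0}(\by)$ meets and bounds $D_d(W,\by,r_0)$ by the sum of at most $2^d$ terms, each strictly less than $1/(2^{d+1}C(d))$. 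Second, the factor $d$ in the constant $d2^{d+1}$ is not an interface cost; it comes from estimate \eqref{estsoldiveq} of Proposition \ref{prdiveq}, namely $\Vert\vec\Gamma\Vert_p\le d\Vert\vec G\Vert_{l_q}$ for vector fields, so your Hausdorff--Young step without the factor $d$ claims more than that proposition gives, and your subsequent conclusion $D_d(W,\vec l,1)<\frac{1}{d2^{d+1}C(d)}$ should read $D_d(W,\vec l,1)<\frac{1}{2^{d+1}C(d)}$.

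The genuine gap is the step you yourself flag as delicate: the construction of a field on $Q_1(\by)$ solving $\mathrm{div}\vec\Gamma=W$ in the sense of \eqref{defgensol} across the interfaces between adjacent integer cubes. You propose cutoffs plus correction terms and assert that the corrections cost ``at most a further factor $d$'', but no construction or estimate is given, and it is not clear why it would hold: multiplying $\vec\Gamma_{\vec l_i}$ by a cutoff $\chi_i$ changes the divergence by $\langle\nabla\chi_i,\vec\Gamma_{\vec l_i}\rangle$, which is not small and must itself be compensated by solving another divergence equation whose solution's $L_d$-norm is not controlled by the quantity in \eqref{cndtransfour}. Since the whole argument hinges on the numerical inequality $\bar D(r_0,W)<1/(2C(d))$, an unquantified correction term defeats it. The paper avoids introducing corrections altogether (using the restriction of the piecewise periodic fields on a cube of side $r_0<1$), so if you want to keep $r_0=1$ and the cutoff construction you must actually carry out the interface estimate; otherwise, follow the paper and justify why the piecewise field (or, equivalently, a cube-by-cube integration by parts in the quadratic form $\int W|u|^2$) yields the bound required in \eqref{cndpert}.
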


The next claim is motivated by a counterexample, constructed by A.M. Molchanov in \cite{Mol}, and it answers the question, raised in Introduction. Let us construct the potential $V(\e)$ in the following manner. Consider a function 
\begin{equation}\label{SinL2}
S\in L_{2}(\R^d)
\end{equation}
 such that 
\begin{equation}\label{intSposit}
S(\e)\ge 0, \quad \mathrm{supp}(S)\subseteq \bar B_{1/2}(\vec 0)\quad\mathrm{and}\quad\int_{\R^d}S(\e)\,\mathrm{d}\e>0,
\end{equation} 
 its dilation $S(\e/r)\,(r\in(0,1/2))$, translation 
 \begin{equation*}
S\big((\e-\vec c)/r\big)\quad(\vec c=(1/2,\,1/2,\,\dots,\,1/2)) 
 \end{equation*}
 and the periodic summation of the last function
\begin{equation}\label{persumS}
V_r(\e)=\sum_{\vec k\in\Z^d}S\big((\e-\vec c-\vec k)/r\big),
\end{equation} 
which is $1$-periodic by all  the variables $x_1,\,x_2,\,\dots,\,x_d$.
For a natural $m$ consider dilation $V_r(m\e)$ of $V_r(\e)$, which is $1/m$-periodic by all  the variables. For some  functions $\mathcal{N}:\,\Z^d\rightarrow\R_+$, $r:\,\Z^d\rightarrow(0,\,1/2)$ and $m:\,\Z^d\rightarrow\N$ define the desired potential $V(\e)$ in the following way:
\begin{equation}\label{dfVrbysumm}
V(\e):=\mathcal{N}(\vec l)V_{r(\vec l)}\big(m(\vec l)\e\big)\quad\mathrm{for}\quad\e\in Q_1(\vec l).
\end{equation}
We shall use Fourier transform $\hat S(\vec\omega)$ on $\R^d$ of the function $S(\e)$.
Denote by $\chi_r(\e)$ the characteristic function of the ball $\bar B_r(\vec 0)$. Recall that the constant $C(d)$ is defined by \eqref{dfC}

The promised claim is following::

\begin{theorem}\label{thlattsupp}
Suppose that along with \eqref{SinL2}, \eqref{intSposit} the condition 
\begin{equation}\label{inclHr}
\bar H=\sup_{r\in(0,\,1)}r^d\Vert  H_r\Vert_{l_{d^\prime}(\Z^d)}<\infty 
\end{equation}	
is fulfilled with $d^\prime=d/(d-1)$, where
\begin{eqnarray}\label{dfHrveck}
H_r(\vec k)=\left\{\begin{array}{ll}
\frac{\hat S(r\vec k)}{2\pi |\vec k|}&\mathrm{for}\quad \vec k\in\Z^d\setminus\{\vec 0\},\\
0&\mathrm{for}\quad \vec k=\vec 0.
\end{array}
\right.
\end{eqnarray}
Then

\noindent (i) the conditions
\begin{equation}\label{cndfracNlml}
\limsup_{|\vec l|\rightarrow\infty}\frac{\mathcal{N}(\vec l)}{m(\vec l)}<\big(d2^{d+1}C(d)\bar H\big)^{-1}
\end{equation}
and 
\begin{equation}\label{cndNtimesrd}
\lim_{|\vec l|\rightarrow\infty}\mathcal{N}(\vec l)(r(\vec l))^d=\infty
\end{equation}
imply discreteness of the spectrum of $H=-\Delta+V(\e)\cdot$;
\vskip2mm

\noindent (ii) if $S(\e)=\chi_{1/2}(\e)$, condition \eqref{inclHr}-\eqref{dfHrveck} is satisfied for $d\in\{3,\,4\}$.
\end{theorem}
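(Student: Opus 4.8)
The plan is to derive Theorem~\ref{thlattsupp} from Theorem~\ref{thFour} by translating the latter's Fourier-side hypothesis \eqref{cndtransfour} into the concrete quantities $\mathcal{N}(\vec l)$, $r(\vec l)$, $m(\vec l)$ that define the potential in \eqref{dfVrbysumm}. First I would fix $\vec l\in\Z^d$ and compute the function $W_{\vec l}$ required by Theorem~\ref{thFour}: since $V\vert_{Q_1(\vec l)}=\mathcal{N}(\vec l)V_{r(\vec l)}(m(\vec l)\e)$ and $W_{\vec l}$ is the $1$-periodic continuation of $V-\int_{Q_1(\vec l)}V\,d\bs$, it is simply the periodic function $\mathcal{N}(\vec l)V_{r(\vec l)}(m(\vec l)\,\cdot\,)$ with its mean subtracted. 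Because $V_r(\e)$ is a periodic summation of the translated dilation $S((\e-\vec c)/r)$, its Fourier coefficients on $\T^d$ are, by the Poisson-type relation between a periodic summation and the Fourier transform on $\R^d$, given by $r^d\hat S(r\vec k)$ (up to the translation phase $e^{-2\pi i\langle \vec c,\vec k\rangle}$, which has modulus $1$ and so drops out of every norm). The $m(\vec l)$-dilation rescales the lattice of active frequencies, replacing $\vec k$ by $m(\vec l)\vec k$, so I would track how the $l_{d'}$-norm transforms under this dilation.

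The key computation is then to evaluate the left-hand side of \eqref{cndtransfour}. With $r=r(\vec l)$ and $m=m(\vec l)$, the ratio $\mathcal{F}(W_{\vec l})(\vec k)/(2\pi i|\vec k|^2)\cdot\vec k$ has modulus $\mathcal{N}(\vec l)\,r^d\,|\hat S(r m\vec k)|/(2\pi|\vec k|)$ on the surviving frequencies $\vec k\in m\Z^d\setminus\{\vec 0\}$, and the mean-subtraction kills only the $\vec k=\vec 0$ term. Writing $\vec k=m\vec j$ and using $|\vec k|=m|\vec j|$ converts this into $\mathcal{N}(\vec l)r^d m^{-1}\cdot|\hat S(rm\cdot m\vec j)|/(2\pi|\vec j|)$—here I would be careful about whether the inner dilation argument produces $S(\e/r)$ or $S(rm\e)$ and track the exact scaling factor, since that is where errors are easy to make. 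The point is that the quantity $|\hat S(\rho\vec j)|/(2\pi|\vec j|)$ is exactly $H_\rho(\vec j)$ from \eqref{dfHrveck}, so after the dilation the $l_{d'}$-norm becomes a constant multiple of $\Vert H_\rho\Vert_{l_{d'}}$ for the appropriate $\rho$, and hypothesis \eqref{inclHr} provides the uniform bound $\rho^d\Vert H_\rho\Vert_{l_{d'}}\le \bar H$. Collecting the powers of $r$ and $m$, the whole expression should reduce to a bounded multiple of $\mathcal{N}(\vec l)/m(\vec l)$ times $\bar H$, and condition \eqref{cndfracNlml} is precisely what forces this $\limsup$ below $1/(d2^{d+1}C(d))$, verifying \eqref{cndtransfour}.

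Next I would verify the second hypothesis \eqref{intQitendinfty} of Theorem~\ref{thFour}. The mean of $V$ over $Q_1(\vec l)$ is $\mathcal{N}(\vec l)$ times the mean of $V_{r(\vec l)}(m(\vec l)\,\cdot\,)$ over the unit cube; by periodicity the $m$-dilation does not change the mean, which equals the integral of the single bump $S((\e-\vec c)/r)$, namely $r^d\int_{\R^d}S$. Thus $\int_{Q_1(\vec l)}V\,d\bs=\mathcal{N}(\vec l)(r(\vec l))^d\int_{\R^d}S(\e)\,d\e$, and since $\int_{\R^d}S>0$ by \eqref{intSposit}, condition \eqref{cndNtimesrd} gives exactly $\int_{Q_1(\vec l)}V\to\infty$. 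With both hypotheses of Theorem~\ref{thFour} in force, discreteness of the spectrum follows, proving claim (i).

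For claim (ii), with $S=\chi_{1/2}$ the Fourier transform $\hat S(\vec\omega)$ is an explicit radial function (a product/ratio of Bessel-type factors, equivalently the normalized characteristic-function transform of a ball), decaying like $|\vec\omega|^{-(d+1)/2}$. I would insert this into $H_r(\vec k)=\hat S(r\vec k)/(2\pi|\vec k|)$ and estimate $r^d\Vert H_r\Vert_{l_{d'}}$ uniformly in $r\in(0,1)$, splitting the sum over $\vec k$ into the low-frequency regime $|r\vec k|\lesssim 1$, where $\hat S$ is bounded and the $1/|\vec k|$ factor controls convergence, and the high-frequency tail $|r\vec k|\gtrsim 1$, where the Bessel decay takes over. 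The main obstacle—and the only genuinely delicate part of the whole proof—is showing this combined sum converges and stays bounded as $r\downarrow 0$ for the specific dimensions $d\in\{3,4\}$: one must check that the decay rate $(d+1)/2$ of $\hat S$ together with the extra $|\vec k|^{-1}$ is fast enough to make the $l_{d'}$-sum (with $d'=d/(d-1)$) finite, and that the factor $r^d$ compensates the growth of the low-frequency count, which is a dimension-sensitive borderline estimate that fails for large $d$.
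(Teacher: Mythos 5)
Your proposal follows essentially the same route as the paper: reduce to Theorem \ref{thFour}, compute $\mathcal{F}(W_{\vec l})$ via the Poisson/dilation rules for the periodic summation so that the left side of \eqref{cndtransfour} collapses to $\frac{\mathcal{N}(\vec l)}{m(\vec l)}\,(r(\vec l))^d\Vert H_{r(\vec l)}\Vert_{l_{d'}}\le\frac{\mathcal{N}(\vec l)}{m(\vec l)}\bar H$, verify \eqref{intQitendinfty} from $\int_{Q_1(\vec l)}V=\mathcal{N}(\vec l)(r(\vec l))^d\int S$, and for (ii) use the Bessel-function bound $|\hat\chi_{1/2}(\vec\omega)|\lesssim|\vec\omega|^{-(d+1)/2}$ to get $r^dH_r(\vec k)\lesssim|\vec k|^{-(d+3)/2}$ and convergence of the $l_{d'}$-sum exactly for $d<5$. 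The one bookkeeping point you flagged resolves as follows: at the surviving frequencies $\vec k=m\vec j$ one has $\mathcal{F}(V_r(m\cdot))(m\vec j)=r^de^{-2\pi i\langle\vec c,\vec j\rangle}\hat S(r\vec j)$, so the argument of $\hat S$ is $r\vec j$ (not $rm\vec j$ or $rm^2\vec j$), the relevant $\rho$ in \eqref{inclHr} is $\rho=r(\vec l)\in(0,1)$, and the factor $1/m$ comes solely from $|\vec k|=m|\vec j|$ in the denominator.
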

\begin{remark}\label{remonint}
	In view of \eqref{intSposit}-\eqref{dfVrbysumm}, we have for $\vec l\in\Z^d:$
\begin{eqnarray*}
&&\int_{Q_1(\vec l)}V(\e)\,\mathrm{d}\e=\mathcal{N}(\vec l)\int_{Q_1(\vec 0)}V_{r(\vec l)}(m(\vec l)\e)\,\mathrm{d}\e=\\
&&\frac{\mathcal{N}(\vec l)}{(m(\vec l))^d}\int_ {Q_{m(\vec l)}(\vec 0)}V_{r(\vec l)}(\bs)\,\mathrm{d}\bs
=\mathcal{N}(\vec l)\int_{Q_1(\vec 0)}V_{r(\vec l)}(\e)\,\mathrm{d}\e=\\
&&\mathcal{N}(\vec l)\int_{\R^d}S\big((\e-\vec c)/r(\vec l)\big)\,\mathrm{d}\e=\mathcal{N}(\vec l)\big(r(\vec l)\big)^d\int_{\R^d}S(\e)\,\mathrm{d}\e.
\end{eqnarray*}
Hence condition \eqref{cndNtimesrd} is equivalent to \eqref{intQitendinfty}.
\end{remark}

\begin{remark}\label{remoncompar}
If $S(\e)=\chi_{1/2}(\e)$, then using arguments from the previous remark, we get:
\begin{eqnarray*}
&&\mathrm{mes}_d\big(\{\e\in Q_1(\vec l):\;V(\e)>0 \} \big)=\int_{Q_1(\vec 0)}V_{r(\vec l)}(m(\vec l)\e)\,\mathrm{d}\e=\\
&&
\big(r(\vec l)\big)^d\mathrm{mes}_d(B_{1/2}(\vec 0)).
\end{eqnarray*}
Hence if  $\lim_{|\vec l|\rightarrow\infty}r(\vec l)=0$, the potential $V(\e)$, constructed above, does not satisfy condition \eqref{cndlebesrear} of Theorem \ref{thcondlebesrear} and condition \eqref{cnddisclarg} of Theorem \ref{thdiscLagr}. But in this case we can choose the functions $\mathcal{N}(\vec l)$ and $m(\vec l)$ such that they satisfy conditions \eqref{cndfracNlml} and \eqref{cndNtimesrd} of Theorem \ref{thlattsupp}, i.e., the spectrum of $H=-\Delta+V(\e)\cdot$ will be discrete. 
\end{remark}

\begin{remark}\label{remcountexMolch}
	The idea of a counterexamle from \cite{Mol}, mentioned above, is following. The potential $V(\e)$ have been constructed as above with $S(\e)=\chi_{1/2}(\e)$,  such that 
\begin{equation}\label{cndforrl}
\forall\,\vec l\in\Z^d:\quad r(\vec l)\in(0,\,1/2)
\end{equation}
and condition \eqref{intQitendinfty} (i,e,. \eqref{cndNtimesrd}) is fulfilled. We see from \eqref{intSposit}-\eqref{dfVrbysumm} that on each cube $Q_1(\vec l)\;(\vec l\in\Z^d)$ the set $A(\vec l):=\mathrm{supp}(V)\cap Q_1(\vec l)$ is a union of $\big(m(\vec l)\big)^d$ balls having the radius $a(\vec l)=r(\vec l)/m(\vec l)$. Consider the set $B(\vec l)$, which is the union of all balls concentric with these balls and having the radius $b(\vec l)=2a(\vec l)$. In \cite{Mol} on each cube $Q_1(\vec l)$ a nonnegative ``test function'' $\phi_{\vec l}(\e)$ have been constructed such that it belongs to $W_2^1(Q_1(\vec l))$ and has the properties: $\phi_{\vec l}(\e)=0$ for $\e\in A(\vec l)$, $\phi_{\vec l}(\e)=1$ for $\e\in Q_1(\vec l)\setminus B(\vec l)$ and it is defined in $B(\vec l)\setminus A(\vec l)$ such that
\begin{eqnarray}\label{cndsmallcap}
&&\int_{Q_1(\vec l)}|\nabla\phi_{\vec l}(\e)|^2\,\mathrm{d}\e=(d-2)\omega_d\big(m(\vec l)\big)^d
\frac{\big(a(\vec l)\big)^{d-2}}{1-\big(a(\vec l)/b(\vec l)\big)^{d-2}}=\nonumber\\
&&G_d\big(m(\vec l)\big)^2r(\vec l)\big)^{d-2}<1,
\end{eqnarray}
where $\omega_d$ is the volume of unit sphere in $\R^d$ and $G_d=(d-2)\omega_d\frac{2^{d-2}}{2^{d-2}-1}$. Condition \eqref{cndsmallcap} was called in \cite{Mol} ``small capacity property'' of the sets $A(\vec l)$. On the other hand, it is easy to show using \eqref{cndforrl} that for some $c>0$
\begin{equation*}
\forall\,\vec l\in\Z^d:\quad \int_{Q_1(\vec l)}|\phi_{\vec l}(\e)|^2\,\mathrm{d}\e\ge\mathrm{mes}_d\big(Q_1(\vec l)\setminus B(\vec l)\big)\ge c.
\end{equation*}
Then, taking into account the density of $C^\infty(Q_1(\vec l))$ in $W_2^1(Q_1(\vec l))$, we obtain:
\begin{eqnarray*}
&&\mu_0(Q_1(\vec l))=\inf_{u\in W_2^1(Q_1(\vec l)) ,\,u\neq 0}\frac{\int_{Q_1(\vec l)}(|\nabla
	u(\e)|^2+V(\e)|u(\e)|^2)\,\mathrm{d}\e}{\int_{Q_1(\vec l)}|u(\e)|^2\,\mathrm{d}\e}\le\\
&&\frac{\int_{Q_1(\vec l)}|\nabla\phi_{\vec l}(\e)|^2\,\mathrm{d}\e}{\int_{Q_1(\vec l)}|\phi_{\vec l}(\e)|^2\,\mathrm{d}\e}<1/c
\end{eqnarray*}
for all $\vec l\in\Z^d$. Hence by the localization principle (Proposition \ref{prloc}), the spectrum of operator $H=-\Delta+V(\e)\cdot$ is not discrete, despite condition \eqref{intVtendinf} is satisfied for the potential $V(\e)$. Notice that in view of \eqref{cndsmallcap} and \eqref{cndforrl}
\begin{equation*}
\frac{\mathcal{N}(\vec l)}{m(\vec l)}\ge 2^{d/2+1}\sqrt{G_d}\mathcal{N}(\vec l)(r(\vec l))^d. 
\end{equation*}
Hence \eqref{cndNtimesrd} implies that condition \eqref{cndfracNlml} of Theorem \ref{thlattsupp}
 is not satisfied. This fact is predictable, because otherwise by this theorem the spectrum of $H$ would be discrete for $d\in\{3,\,4\}$.
\end{remark}

\subsection{The case where the operator $H$ is defined in $L_2(\Omega)\;(\Omega\subset\R^d)$}\label{subsec:Omega}

Let us notice that in \cite{M-Sh}  Theorem \ref{thMazSh} have been formulated in a more general form. Assume that a nonnegative potential $V(\e)$ is defined in a open domain $\Omega\subset\R^d$ (which can be unbounded), $V\in L_{1,\,loc}(\Omega)$ and the Schr\"odinger operator $H=-\Delta+V(\cdot)$ acts in the space $L_2(\Omega)$. Denote by
$\mathcal{N}_{\gamma}(\by,r,\Omega)\;(\gamma\in(0,1))$ the collection of all
compact sets $F\subseteq\bar{\mathcal{G}}_r(\by)$ satisfying the conditions
\begin{equation}\label{inclGminOm}
\bar{\mathcal{G}}_r(\by)\setminus\Omega\subseteq F
\end{equation}
and \eqref{defNcap}, where $``\mathrm{cap}''$ is the harmonic capacity and the domain $\mathcal{G}_r(\by)$ is defined by 
\eqref{dfcalGry}, \eqref{formofcalG}. The more general formulation of Theorem \ref{thMazSh} is:
\begin{theorem}\label{thMazShOm}[\cite{M-Sh}, Theorem 2.2]
	The spectrum of the operator $H$ is discrete, if  
	for some $r_0>0$ and for any $r\in(0,r_0)$ the condition
	\begin{equation}\label{cndmolch1Om}
	\lim_{|\by|\rightarrow\infty}\inf_{F\in\mathcal{N}_{\gamma(r)}(\by,r,\Omega)}\int_{{\mathcal G}_r(\by)\setminus
		F}V(\e)\, \mathrm{d}\e=+\infty,
	\end{equation}
	is satisfied, where $\gamma(r)$ satisfies the conditions \eqref{cndgammar}.
	\end{theorem}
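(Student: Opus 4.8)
The statement is the Maz'ya--Shubin discreteness criterion in the generality of an open domain $\Omega$, cited here from \cite{M-Sh}. The cleanest route is to reconstruct their argument for $H$ acting in $L_2(\R^d)$ and then isolate the single new feature forced by $\Omega$, namely the boundary constraint \eqref{inclGminOm}. The plan is to reduce discreteness to a local lower bound for the bottom of the Dirichlet spectrum and to derive that bound from a capacitary Poincar\'e inequality on the star-shaped domains $\mathcal{G}_r(\by)$.

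First I would pass to the variational side. For the Friedrichs realization of $H$ on $L_2(\Omega)$ with $V\ge0$, discreteness is equivalent to the bottom of the essential spectrum being $+\infty$, and by Persson's characterization (in the spirit of the localization principle, Proposition \ref{prloc}) this amounts to
\[
\lim_{R\to\infty}\ \inf\Big\{\tfrac{\int_\Omega(|\nabla u|^2+V|u|^2)\,\mathrm{d}\e}{\int_\Omega|u|^2\,\mathrm{d}\e}\ :\ u\in C_0^\infty(\Omega),\ \mathrm{supp}(u)\cap B_R(\vec 0)=\emptyset\Big\}=+\infty .
\]
Covering the support of such a $u$ by finitely many domains $\mathcal{G}_r(\by_i)$ with $|\by_i|$ large and a bounded-overlap partition, it suffices to bound the local quotient on one $\mathcal{G}_r(\by)$ from below and then optimize over $r$. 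Here the boundary enters structurally: any $u\in C_0^\infty(\Omega)$, extended by zero, vanishes on $\bar{\mathcal{G}}_r(\by)\setminus\Omega$, so this set may be adjoined to any exceptional set on which $u$ is forced to be small without changing the quadratic form. This is exactly why $\mathcal{N}_{\gamma}(\by,r,\Omega)$ demands $\bar{\mathcal{G}}_r(\by)\setminus\Omega\subseteq F$ in \eqref{inclGminOm}, and with this remark the domain estimate reduces to the whole-space one applied to functions vanishing on the prescribed boundary piece.

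The analytic heart is a dichotomy built on a capacitary Chebyshev inequality $\mathrm{cap}(\{|u-\bar u|\ge\lambda\})\lesssim\lambda^{-2}\int_{\mathcal{G}_r(\by)}|\nabla u|^2\,\mathrm{d}\e$ for the relative capacity, where $\bar u$ is the mean of $u$ over $\mathcal{G}_r(\by)$. Put $F=\{\,|u-\bar u|\ge|\bar u|/2\,\}$. If $\mathrm{cap}(F)>\gamma(r)\,\mathrm{cap}(\mathcal{G}_r(\by))$, then the capacity bound forces $\int_{\mathcal{G}_r(\by)}|\nabla u|^2\gtrsim\gamma(r)\,r^{d-2}\bar u^2$, and with the Poincar\'e estimate $\int|u-\bar u|^2\lesssim r^2\int|\nabla u|^2$ one gets $\int_{\mathcal{G}_r(\by)}|\nabla u|^2\gtrsim\gamma(r)\,r^{-2}\int_{\mathcal{G}_r(\by)}|u|^2$. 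If instead $F\in\mathcal{N}_{\gamma(r)}(\by,r,\Omega)$, then $|u|\ge|\bar u|/2$ off $F$, so $\int_{\mathcal{G}_r(\by)}V|u|^2\ge\tfrac14\bar u^2\inf_{F'}\int_{\mathcal{G}_r(\by)\setminus F'}V\,\mathrm{d}\e$; feeding this and $\int|u|^2\lesssim r^2\int|\nabla u|^2+r^d\bar u^2$ into the Rayleigh quotient gives the potential-driven bound. Combining the two cases yields, with a constant depending only on the shape parameters $\rho$ and $\mathrm{Lip}(r(\cdot))$ of $\mathcal{G}$,
\[
\lambda_0(\mathcal{G}_r(\by))\ \gtrsim\ \min\Big\{\gamma(r)\,r^{-2},\ r^{-d}\inf_{F\in\mathcal{N}_{\gamma(r)}(\by,r,\Omega)}\int_{\mathcal{G}_r(\by)\setminus F}V\,\mathrm{d}\e\Big\}.
\]

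Finally I would run the two-parameter limit. For fixed $r$, condition \eqref{cndmolch1Om} sends the second entry of the minimum to $+\infty$ as $|\by|\to\infty$, so for all sufficiently distant $\by$ the minimum equals $\gamma(r)r^{-2}$ and $\liminf_{|\by|\to\infty}\lambda_0(\mathcal{G}_r(\by))\gtrsim\gamma(r)r^{-2}$; feeding this into the covering estimate shows the bottom of the essential spectrum is $\gtrsim\gamma(r)r^{-2}$ for every admissible $r$. Since \eqref{cndgammar} gives $\limsup_{r\downarrow0}r^{-2}\gamma(r)=\infty$, the supremum over $r$ is $+\infty$, which is discreteness. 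The main obstacle I expect is the capacitary Poincar\'e inequality itself, established uniformly over the whole family $\{\mathcal{G}_r(\by)\}$ with constants independent of $\by$ and the correct power of $r$; a second, more conceptual point is precisely this two-parameter argument, since for a single fixed $r$ one controls only the $\liminf$ of $\lambda_0(\mathcal{G}_r(\by))$, and it is the freedom to let $r\downarrow0$ through \eqref{cndgammar} that forces an infinite limit.
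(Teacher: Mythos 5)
The paper does not prove this statement: Theorem \ref{thMazShOm} is quoted from \cite{M-Sh} (their Theorem 2.2), so there is no internal proof to compare yours against. Judged as a reconstruction of the Maz'ya--Shubin sufficiency argument, your outline is essentially the right one: reduction to a local Rayleigh-quotient bound via a Persson-type characterization and a bounded-overlap covering, a capacitary dichotomy on each body $\mathcal{G}_r(\by)$ built on the set $F=\{|u-\bar u|\ge|\bar u|/2\}$, and the observation that a test function from $C_0^\infty(\Omega)$ extended by zero satisfies $|u-\bar u|=|\bar u|$ on $\bar{\mathcal{G}}_r(\by)\setminus\Omega$, so that this set is automatically absorbed into $F$ --- which is exactly why the negligibility class $\mathcal{N}_{\gamma}(\by,r,\Omega)$ carries the constraint \eqref{inclGminOm}. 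The concluding two-parameter limit, using \eqref{cndmolch1Om} for fixed $r$ and then \eqref{cndgammar} as $r\downarrow 0$, is also handled correctly.

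Two points need attention. First, the local quantity that must be bounded from below is the unrestricted (Neumann-type) quotient, the analogue of $\mu_0$ in Proposition \ref{prloc}(iii), not the Dirichlet bottom eigenvalue $\lambda_0(\mathcal{G}_r(\by))$ that you name: the restriction of a global test function to $\mathcal{G}_r(\by)$ does not vanish on $\partial\mathcal{G}_r(\by)$, and since $\mu_0\le\lambda_0$ a lower bound on $\lambda_0$ alone cannot feed into the covering argument. Your dichotomy in fact applies to arbitrary Lipschitz $u$ on $\mathcal{G}_r(\by)$ and therefore does bound the Neumann quotient; only the label is wrong. Second, the entire weight of the proof sits in the uniform capacitary Poincar\'e inequality $\int_{\mathcal{G}_r(\by)}|u|^2\,\mathrm{d}\e\le C\,r^d\,(\mathrm{cap}(F))^{-1}\int_{\mathcal{G}_r(\by)}|\nabla u|^2\,\mathrm{d}\e$ for $u$ vanishing on $F$ (equivalently, your capacitary Chebyshev bound for $u-\bar u$), with $C$ depending only on the shape parameters of $\mathcal{G}$; this requires a scale-invariant Sobolev extension across $\partial\mathcal{G}_r(\by)$ for the whole family of star-shaped bodies, and it is the main lemma of \cite{M-Sh}, not something obtained for free. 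You correctly flag it as the principal obstacle, but as written the proposal assumes it rather than establishes it, so what you have is a faithful outline of the known proof rather than a self-contained argument.
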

Denote by
\begin{equation*}
\mathcal{P}_{\gamma}(\by,r,\Omega)\quad(\gamma\in(0,1))
\end{equation*}
the
collection of all $\mathrm{mes}_d$-measurable sets $E\subseteq\mathcal{G}_r(\by)\cap\Omega$
satisfying the condition $\mathrm{mes}_d(E)\ge(1-\gamma)\mathrm{mes}_d(\mathcal{G}_r(\by))$.  For $W\in L_{1,\,loc}(\Omega)$ and $\gamma\in(0,1)$ consider the quantities:
\begin{eqnarray}\label{dfRWyrOm}
R_W(\by,r,\Omega,\gamma)=\left\{\begin{array}{ll}
+\infty,&\mathrm{if}\\\mathrm{mes}_d(\mathcal{G}_r(\by)\cap\Omega)<(1-\gamma)\,\mathrm{mes}_d(\mathcal{G}_r(\by)),\\
\\
\int_{\mathcal{G}_r(\by)\cap\Omega} W(\e)\,\mathrm{d}\e,&\mathrm{if}\\\mathrm{mes}_d(\mathcal{G}_r(\by)\cap\Omega)=\\(1-\gamma)\,\mathrm{mes}_d(\mathcal{G}_r(\by)),\\
\\
\inf_{E\in\mathcal{P}_{\gamma}(\by,r,\Omega)}\int_E W(\e)\,\mathrm{d}\e,&\mathrm{if}\\\mathrm{mes}_d(\mathcal{G}_r(\by)\cap\Omega)>(1-\gamma)\,\mathrm{mes}_d(\mathcal{G}_r(\by)),
\end{array}
\right.	
\end{eqnarray}

A direct use of isocapacity inequality \eqref{isocapineq} leads to the following claim:
\begin{theorem}\label{thmeasinstcapRVOm}
	Suppose that for some $r_0>0$ and any $r\in(0,r_0)$ the condition
	\begin{equation}\label{cndmeasinstcapcalGOm}
	\lim_{|\by|\rightarrow\infty}R_V(\by,r,\Omega,\gamma(r))=+\infty
	\end{equation}
	is fulfilled with a function $\gamma(r)$ satisfying condition \eqref{cndtildgam}.
	Then the spectrum of the operator $H=-\Delta+V(\e)$ is discrete.
\end{theorem}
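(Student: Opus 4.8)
The plan is to reduce Theorem~\ref{thmeasinstcapRVOm} to the general Mazya--Shubin criterion in the domain setting, namely Theorem~\ref{thMazShOm}, by replacing the capacity constraint on the competitor sets $F$ by a Lebesgue-measure constraint. The bridge between the two is the isocapacity inequality \eqref{isocapineq}, exactly as in the proof of Theorem~\ref{thcondlebesgue}, but now carrying along the extra requirement \eqref{inclGminOm} that the exceptional set $F$ must contain $\bar{\mathcal G}_r(\by)\setminus\Omega$. So the first step is to take an arbitrary compact $F\in\mathcal N_{\gamma(r)}(\by,r,\Omega)$, i.e.\ a set with $\bar{\mathcal G}_r(\by)\setminus\Omega\subseteq F$ and $\mathrm{cap}(F)\le\gamma(r)\,\mathrm{cap}(\bar{\mathcal G}_r(\by))$, and convert its capacity bound into a measure bound via \eqref{isocapineq}, yielding
\begin{equation*}
\mathrm{mes}_d(F)\le c_d\big(\gamma(r)\,\mathrm{cap}(\bar{\mathcal G}_r(\by))\big)^{d/(d-2)}.
\end{equation*}
Since $\mathrm{cap}(\bar{\mathcal G}_r(\by))$ scales like $r^{d-2}$ under the dilation \eqref{dfcalGry}, the right-hand side is bounded by a constant times $\big(\gamma(r)\,r^{d-2}\big)^{d/(d-2)}=\gamma(r)^{d/(d-2)}\,r^{d}$, and the condition \eqref{cndtildgam} on $\gamma$ is precisely what is needed so that, after choosing an auxiliary $\tilde\gamma(r)$, this measure is at most $\tilde\gamma(r)\,\mathrm{mes}_d(\mathcal G_r(\by))$ with $\tilde\gamma$ again of the admissible form.

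The second step is the key translation. On the complement $\mathcal G_r(\by)\setminus F$ we have $V\ge 0$ and $F\supseteq\bar{\mathcal G}_r(\by)\setminus\Omega$, so $\mathcal G_r(\by)\setminus F\subseteq\mathcal G_r(\by)\cap\Omega$; moreover the set $E:=\mathcal G_r(\by)\setminus F$ satisfies
\begin{equation*}
\mathrm{mes}_d(E)\ge\mathrm{mes}_d(\mathcal G_r(\by))-\tilde\gamma(r)\,\mathrm{mes}_d(\mathcal G_r(\by))=(1-\tilde\gamma(r))\,\mathrm{mes}_d(\mathcal G_r(\by)),
\end{equation*}
so $E\in\mathcal P_{\tilde\gamma(r)}(\by,r,\Omega)$ whenever that collection is nonempty. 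Consequently $\int_{\mathcal G_r(\by)\setminus F}V\,\mathrm d\e\ge R_V(\by,r,\Omega,\tilde\gamma(r))$, where $R_V$ is the quantity defined by \eqref{dfRWyrOm}. Here the three-case structure of $R_W$ does the bookkeeping automatically: if $\mathrm{mes}_d(\mathcal G_r(\by)\cap\Omega)<(1-\tilde\gamma(r))\,\mathrm{mes}_d(\mathcal G_r(\by))$ then $\mathcal N_{\tilde\gamma(r)}(\by,r,\Omega)$ is empty (no admissible $F$ exists because the forced set $\bar{\mathcal G}_r(\by)\setminus\Omega$ already has measure exceeding $\tilde\gamma(r)\,\mathrm{mes}_d(\mathcal G_r(\by))$), so the infimum in \eqref{cndmolch1Om} is $+\infty$ by convention, matching the $+\infty$ branch of $R_V$; the boundary and interior cases correspond to the integral over $\mathcal G_r(\by)\cap\Omega$ and the genuine infimum over $\mathcal P_{\tilde\gamma}$ respectively. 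Thus taking the infimum over all admissible $F$ gives
\begin{equation*}
\inf_{F\in\mathcal N_{\tilde\gamma(r)}(\by,r,\Omega)}\int_{\mathcal G_r(\by)\setminus F}V\,\mathrm d\e\ge R_V(\by,r,\Omega,\tilde\gamma(r)).
\end{equation*}

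The final step is to combine these. The hypothesis \eqref{cndmeasinstcapcalGOm} says $R_V(\by,r,\Omega,\gamma(r))\to+\infty$ as $|\by|\to\infty$ for each small $r$, so by the lower bound just established the Mazya--Shubin quantity \eqref{cndmolch1Om} also tends to $+\infty$; invoking Theorem~\ref{thMazShOm} then gives discreteness of the spectrum. \textbf{The main obstacle} I anticipate is the careful matching of the two $\gamma$-type functions and verifying that $\tilde\gamma$ inherits the admissibility condition \eqref{cndtildgam}: one must check that the passage from the capacity bound to the measure bound, through the exponent $d/(d-2)$ in \eqref{isocapineq}, is compatible with $\limsup_{r\downarrow0}r^{-2d/(d-2)}\gamma(r)=\infty$ transforming into the Mazya--Shubin requirement $\limsup_{r\downarrow0}r^{-2}\tilde\gamma(r)=\infty$ of \eqref{cndgammar}. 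This is exactly the exponent that was corrected in Remark~\ref{remmistake}, so it warrants explicit care. A secondary but genuine subtlety is the correct handling of the empty-collection case and the boundary case in \eqref{dfRWyrOm}, where the convention $\inf\emptyset=+\infty$ must be reconciled with the definition of $\mathcal N_{\gamma}(\by,r,\Omega)$ so that no admissible competitor is overlooked.
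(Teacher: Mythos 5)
Your proposal is correct and follows essentially the same route as the paper: reduce to Theorem \ref{thMazShOm} by converting the capacity constraint on $F$ into a measure constraint via the isocapacity inequality, observe that \eqref{inclGminOm} forces $\mathcal{G}_r(\by)\setminus F\subseteq\mathcal{G}_r(\by)\cap\Omega$ so that the Mazya--Shubin infimum is bounded below by $R_V$, with the $+\infty$ branch of \eqref{dfRWyrOm} absorbing the case where the competitor class is empty. The one piece of bookkeeping to straighten out is the direction of the substitution: you should take $F\in\mathcal{N}_{\tilde\gamma(r)}(\by,r,\Omega)$ with $\tilde\gamma(r)=\big(\gamma(r)/G\big)^{(d-2)/d}$ (which satisfies \eqref{cndgammar} precisely because $\gamma$ satisfies \eqref{cndtildgam}), so that the isocapacity inequality produces a measure bound with the original $\gamma(r)$ and the lower bound is $R_V(\by,r,\Omega,\gamma(r))$ --- the quantity your hypothesis actually controls --- rather than starting from $\mathcal{N}_{\gamma(r)}$ and landing on $R_V(\by,r,\Omega,\tilde\gamma(r))$ with $\tilde\gamma\sim\gamma^{d/(d-2)}$, which the hypothesis does not control.
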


Taking $V(\e)\equiv 0$, we obtain the following consequence of Theorem \ref{thmeasinstcapRVOm}:
\begin{corollary}\label{cordiscspecLaplace}
	If for some $r_0>0$ and any $r\in(0,r_0)$ the condition
	\begin{equation}\label{cnddiscspecLap}
	\limsup_{|\by|\rightarrow\infty}\frac{\mathrm{mes}_d(\mathcal{G}_r(\by)\cap\Omega)}{\mathrm{mes}_d(\mathcal{G}_r(\by))}\le 1-\gamma(r) 
	\end{equation}
	is fulfilled with a function $\gamma(r)$ satisfying condition \eqref{cndtildgam}, then the spectrum of the operator $-\Delta$ is discrete.
\end{corollary}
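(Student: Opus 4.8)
The plan is to deduce this directly from Theorem \ref{thmeasinstcapRVOm} by specializing to the potential $V(\e)\equiv 0$; the only genuine subtlety is bridging the non-strict inequality in hypothesis \eqref{cnddiscspecLap} to the strict inequality needed to force the quantity $R_0$ to take the value $+\infty$.

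First I would inspect the definition \eqref{dfRWyrOm} of $R_W(\by,r,\Omega,\gamma)$ in the case $W\equiv 0$. Since the integrand then vanishes identically, both the middle branch and the bottom branch return the value $0$, so that $R_0(\by,r,\Omega,\gamma)$ equals $+\infty$ precisely when $\mathrm{mes}_d(\mathcal{G}_r(\by)\cap\Omega)<(1-\gamma)\,\mathrm{mes}_d(\mathcal{G}_r(\by))$ and equals $0$ otherwise. Hence, to verify the hypothesis \eqref{cndmeasinstcapcalGOm} of Theorem \ref{thmeasinstcapRVOm}, namely $\lim_{|\by|\to\infty}R_0(\by,r,\Omega,\gamma(r))=+\infty$, it suffices to show that for each fixed $r$ the strict inequality
\[
\frac{\mathrm{mes}_d(\mathcal{G}_r(\by)\cap\Omega)}{\mathrm{mes}_d(\mathcal{G}_r(\by))}<1-\gamma(r)
\]
holds for all sufficiently large $|\by|$.

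The obstacle is that hypothesis \eqref{cnddiscspecLap} only furnishes $\limsup_{|\by|\to\infty}$ of this ratio to be $\le 1-\gamma(r)$, which by itself does not guarantee the strict inequality eventually (the ratio could equal $1-\gamma(r)$ along a subsequence). I would resolve this by replacing $\gamma$ with a slightly smaller weight. Setting $\tilde\gamma(r):=\gamma(r)/2$, one has $\tilde\gamma(r)\in(0,1)$ and
\[
\limsup_{r\downarrow 0}r^{-2d/(d-2)}\tilde\gamma(r)=\tfrac12\limsup_{r\downarrow 0}r^{-2d/(d-2)}\gamma(r)=\infty,
\]
so $\tilde\gamma$ again satisfies condition \eqref{cndtildgam}. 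Since $1-\tilde\gamma(r)>1-\gamma(r)$, hypothesis \eqref{cnddiscspecLap} yields, for each fixed $r\in(0,r_0)$,
\[
\limsup_{|\by|\to\infty}\frac{\mathrm{mes}_d(\mathcal{G}_r(\by)\cap\Omega)}{\mathrm{mes}_d(\mathcal{G}_r(\by))}\le 1-\gamma(r)<1-\tilde\gamma(r),
\]
and hence, by the definition of $\limsup$, there is $\rho>0$ with $\mathrm{mes}_d(\mathcal{G}_r(\by)\cap\Omega)<(1-\tilde\gamma(r))\,\mathrm{mes}_d(\mathcal{G}_r(\by))$ for all $|\by|>\rho$.

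Consequently $R_0(\by,r,\Omega,\tilde\gamma(r))=+\infty$ for all $|\by|>\rho$, so that $\lim_{|\by|\to\infty}R_0(\by,r,\Omega,\tilde\gamma(r))=+\infty$ for every $r\in(0,r_0)$. Applying Theorem \ref{thmeasinstcapRVOm} with the potential $V\equiv 0$ and the weight $\tilde\gamma$ then delivers discreteness of the spectrum of $-\Delta$, completing the argument. I expect the reduction to the strict inequality via the shrinking $\tilde\gamma=\gamma/2$ to be the only nontrivial point; everything else is a bookkeeping specialization of the cited theorem.
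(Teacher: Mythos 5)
Your argument is correct and coincides with the paper's own proof: the paper likewise sets $\tilde\gamma(r)=\gamma(r)/2$, observes that \eqref{cnddiscspecLap} then gives the strict inequality $\limsup_{|\by|\rightarrow\infty}\mathrm{mes}_d(\mathcal{G}_r(\by)\cap\Omega)/\mathrm{mes}_d(\mathcal{G}_r(\by))<1-\tilde\gamma(r)$, and concludes via definition \eqref{dfRWyrOm} and Theorem \ref{thmeasinstcapRVOm} with $V\equiv 0$. Your write-up merely spells out the intermediate steps (the case analysis in \eqref{dfRWyrOm} and the passage from the limsup bound to an eventual strict inequality) in more detail.
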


\begin{remark}\label{remcor48}
	Condition \eqref{cnddiscspecLap} can be written in the equivalent form:
	\begin{equation*}
	\liminf_{|\by|\rightarrow\infty}\frac{\mathrm{mes}_d(\mathcal{G}_r(\by)\setminus\Omega)}{\mathrm{mes}_d(\mathcal{G}_r(\by))}\ge \gamma(r) 
	\end{equation*}
	In this formulation for the case $\gamma(r)\equiv const$ the result of Corollary \ref{cordiscspecLaplace} was obtained in \cite{Kon-Shub} (Corollary 6.8).	
\end{remark}

Consider the probability space
\begin{equation*}
\big(\mathcal{G}_r(\by)\cap\Omega,\,\Sigma_L(\mathcal{G}_r(\by)\cap\Omega),\,m_{d,r,\Omega}\big),
\end{equation*}
 where $m_{d,r,\Omega}$ is the normalized Lebesgue measure on $\mathcal{G}_r(\by))\cap\Omega$:
 \begin{equation*}
 m_{d,r,\Omega}(A):=\frac{\mathrm{mes}_d(A)}{\mathrm{mes}_d({\mathcal G}_r(\by)\cap\Omega)}\quad(A\in\Sigma_L(\bar {\mathcal G}_r(\by)\cap\Omega)).
 \end{equation*}
  Denote by $\mathrm{E}_{\by,r,\Omega}(W)$ and $\mathrm{Dev}_{\by,r,\Omega}(W)$ the expectation and deviation of a function $W\in L_2(\mathcal{G}_r(\by)\cap\Omega,\,m_{d,r,\Omega})$. Consider the quantity:
 \begin{eqnarray}\label{dfYWyrOm}
 Y_W(\by,r,\Omega,\gamma)=\left\{\begin{array}{ll}
 +\infty,&\mathrm{if}\\\mathrm{mes}_d(\mathcal{G}_r(\by)\cap\Omega)<(1-\gamma/2)\,\mathrm{mes}_d(\mathcal{G}_r(\by)),\\
 \\
\mathrm{E}_{\by,r,\Omega}(W)-\sqrt{2\bar\gamma(\gamma,\Omega)}\cdot\mathrm{Dev}_{\by,r,\Omega}(W), &\mathrm{if}\\\mathrm{mes}_d(\mathcal{G}_r(\by)\cap\Omega)\ge(1-\gamma/2)\,\mathrm{mes}_d(\mathcal{G}_r(\by)),
 \end{array}
 \right.	
 \end{eqnarray} 
 where 
\begin{equation}\label{bargamOm}
\bar\gamma(\gamma,\Omega)=1-(1-\gamma/2)\frac{\mathrm{mes}_d(\mathcal{G}_r(\by))}{\mathrm{mes}_d({\mathcal G}_r(\by)\cap\Omega)}.
\end{equation} 

The following claim is based on Theorems \ref{thestLagr} and \ref{thmeasinstcapRVOm}:
\begin{theorem}\label{thdiscLagrOm}
	Suppose that $V\in L_{2,loc}(\Omega)$ and the condition 
	\begin{equation}\label{cnddisclargOm}
	\lim_{|\by|\rightarrow\infty} Y_V(\by,r,\Omega,\gamma(r))=+\infty
	\end{equation}
	is satisfied for some $r_0>0$ and any $r\in(0,r_0)$, where $\gamma(r)$ satisfies the condition \eqref{cndtildgam}. Then the spectrum of the operator
	$H=-\Delta+V(\e)$ is discrete.
\end{theorem}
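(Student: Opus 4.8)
The plan is to deduce the hypothesis of Theorem \ref{thmeasinstcapRVOm} from condition \eqref{cnddisclargOm} and then invoke that theorem. Concretely, I would show that for every $r\in(0,r_0)$ the condition \eqref{cnddisclargOm}, i.e. $Y_V(\by,r,\Omega,\gamma(r))\to+\infty$ as $|\by|\to\infty$, forces $R_V(\by,r,\Omega,\gamma(r)/2)\to+\infty$. Since halving $\gamma$ preserves both $\gamma\in(0,1)$ and the divergent $\limsup$ in \eqref{cndtildgam}, the function $\gamma/2$ again satisfies \eqref{cndtildgam}, so Theorem \ref{thmeasinstcapRVOm} applied with $\gamma$ replaced by $\gamma/2$ will give discreteness. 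The reason for passing to $\gamma/2$ is that the $R_V$-threshold then becomes $(1-\gamma/2)\mathrm{mes}_d(\mathcal{G}_r(\by))$, which is exactly the threshold appearing in the definition \eqref{dfYWyrOm} of $Y_V$; everything therefore reduces to a pointwise (in $\by$) comparison of $R_V(\by,r,\Omega,\gamma/2)$ with $Y_V(\by,r,\Omega,\gamma)$.

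Fixing $r,\by$, I abbreviate $G=\mathcal{G}_r(\by)$, $b=\mathrm{mes}_d(G\cap\Omega)$ and $a=(1-\gamma/2)\,\mathrm{mes}_d(G)$; note that $\mathrm{mes}_d(G)$ does not depend on $\by$, so $a>0$ is a constant. The three regimes in \eqref{dfRWyrOm} and \eqref{dfYWyrOm} are matched as follows. If $b<a$, both quantities are $+\infty$. If $b=a$, then $\bar\gamma(\gamma,\Omega)=0$ by \eqref{bargamOm}, so $Y_V=\mathrm{E}_{\by,r,\Omega}(V)$ while $R_V=\int_{G\cap\Omega}V\,\mathrm{d}\e=b\,\mathrm{E}_{\by,r,\Omega}(V)=a\,Y_V$. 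The substantive regime is $b>a$: writing the constraint $\mathrm{mes}_d(E)\ge(1-\gamma/2)\mathrm{mes}_d(G)$ in terms of the probability measure $\mu=m_{d,r,\Omega}$ as $\mu(E)\ge t$ with $t=a/b$, and using $\int_E V\,\mathrm{d}\e=b\int_E V\,\mu(\mathrm{d}x)$, identifies $R_V(\by,r,\Omega,\gamma/2)=b\,I_V(t,G\cap\Omega,\mu)$ in the sense of Problem \ref{extrprobl}. Since $\gamma\in(0,1)$ and $b\le\mathrm{mes}_d(G)$ one checks $t\in(1/2,1)$, exactly the range demanded by Theorem \ref{thestLagr}.

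Applying Theorem \ref{thestLagr} with $\sigma(t)=2t-1$ and simplifying $\tfrac12(1+\sigma(t))=t$, $\tfrac12\sqrt{1-\sigma^2(t)}=\sqrt{t(1-t)}$ yields $I_V(t,G\cap\Omega,\mu)\ge t\,\mathrm{E}_{\by,r,\Omega}(V)-\sqrt{t(1-t)}\,\mathrm{Dev}_{\by,r,\Omega}(V)$; multiplying by $b$ and inserting $bt=a$ and $1-t=\bar\gamma(\gamma,\Omega)$ turns the bound into $R_V\ge a\,\mathrm{E}_{\by,r,\Omega}(V)-\sqrt{a(b-a)}\,\mathrm{Dev}_{\by,r,\Omega}(V)$. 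The decisive elementary estimate is $b\le 2a$, valid because $2a=(2-\gamma)\mathrm{mes}_d(G)\ge\mathrm{mes}_d(G)\ge b$ for $\gamma\in(0,1)$; it gives $\sqrt{(b-a)/a}\le\sqrt{2(b-a)/b}=\sqrt{2\bar\gamma(\gamma,\Omega)}$, and since $\mathrm{Dev}_{\by,r,\Omega}(V)\ge0$ this upgrades the bound to $R_V(\by,r,\Omega,\gamma/2)\ge a\big(\mathrm{E}_{\by,r,\Omega}(V)-\sqrt{2\bar\gamma(\gamma,\Omega)}\,\mathrm{Dev}_{\by,r,\Omega}(V)\big)=a\,Y_V(\by,r,\Omega,\gamma)$. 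Letting $|\by|\to\infty$ and invoking \eqref{cnddisclargOm} completes the reduction. I expect the only real difficulty to be bookkeeping rather than analysis: reconciling the two thresholds $1-\gamma$ and $1-\gamma/2$ together with the definition \eqref{bargamOm} of $\bar\gamma$ so that the clean inequality $\sqrt{(b-a)/a}\le\sqrt{2\bar\gamma}$ appears, and verifying that $t$ stays strictly inside $(1/2,1)$ so that Theorem \ref{thestLagr} genuinely applies.
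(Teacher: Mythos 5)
Your proposal is correct and follows essentially the same route as the paper: both reduce to Theorem \ref{thmeasinstcapRVOm} with $\gamma(r)/2$, apply Theorem \ref{thestLagr} with $t=1-\bar\gamma(\gamma(r),\Omega)$ (equivalently $t=a/b$ in your notation) to bound $R_V(\by,r,\Omega,\gamma(r)/2)$ from below by a constant multiple of $Y_V(\by,r,\Omega,\gamma(r))$, and handle the degenerate regimes of the definitions separately. Your write-up is in fact slightly more explicit than the paper's, since you spell out the inequality $\sqrt{(b-a)/a}\le\sqrt{2\bar\gamma}$ via $b\le 2a$ and verify $t\in(1/2,1)$, both of which the paper uses only implicitly.
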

\begin{remark}\label{reminfldev}
We see from \eqref{dfYWyrOm} and \eqref{bargamOm} that if in Theorem \ref{thdiscLagrOm} the quantity $\frac{(1-\gamma/2)\mathrm{mes}_d(\mathcal{G}_r(\by))}{\mathrm{mes}_d({\mathcal G}_r(\by)\cap\Omega)}$ approaches $1$ from below for large $|\by|$, the condition \eqref{cnddisclargOm} becomes weaker because of a weaker influence of  deviation of the potential $V(\e)$ in ${\mathcal G}_r(\by)\cap\Omega$.  On the other hand, if it is bigger than $1$, then by Corollary \ref{cordiscspecLaplace} even $V(\e)\equiv 0$ satisfies this condition.
\end{remark} 
 
\section{Proof of main results} \label{sec:proofmainres}
\setcounter{equation}{0}

\subsection{Proof of Theorem \ref{thcondlebesgue}}
\begin{proof}
Consider the following quantities connected 
with the domain $\mathcal{G}$ having the form \eqref{formofcalG}:
\begin{equation}\label{dfbarrrm}
\bar r:=\max_{\omega\in S^{d-1}}r(\omega),\quad r_m:=\min_{\omega\in S^{d-1}}r(\omega),
\end{equation}
\begin{equation}\label{dfconstG}
G:=\Big(\frac{\bar r}{r_m}\Big)^d.
\end{equation}
we see from \eqref{formofcalG}, \eqref{dfcalGry}, \eqref{dfbarrrm}
and \eqref{dfconstG} that $\mathcal{G}_r(\by)\subseteq B_{\bar r\cdot r}(\by)$ 
and 
\begin{equation*}
\mathrm{mes}_d(B_{\bar r\cdot r}(\by))\le G\cdot\mathrm{mes}_d(\mathcal{G}_r(\by)).
\end{equation*}
Let us define
\begin{equation}\label{dfgamcalG}
\tilde\gamma(r)=\big(\gamma(r)/G\big)^{(d-2)/d}.
\end{equation}
In view of \eqref{cndtildgam}. the function $\tilde\gamma(r)$ satisfies condition \eqref{cndgammar}.
Suppose that $F\in\mathcal{N}_{\tilde\gamma(r)}(\by,r)$, where the collection $\mathcal{N}_{\gamma}(\by,r)$
is defined by \eqref{defNcap}.  In view of
the isocapacity inequality \eqref{isocapineq}
and the fact that it comes as
identity for $F=\bar B_{\bar r\cdot r}(\by)$, we have, taking into account
\eqref{dfgamcalG}:
\begin{eqnarray*}\
	&&\frac{\mathrm{mes}_d(F)}{\mathrm{mes}_d
		(\mathcal{G}_r(\by))}\le G\frac{\mathrm{mes}_d(F)}{\mathrm{mes}_d
		(B_{\bar r\cdot r}(\by))}\le G\Big(\frac{\mathrm{cap}(F)}{\mathrm{cap}(\bar
		B_{\bar r\cdot r}(\by))}\Big)^{d/(d-2)}\le\nonumber\\
	&&  G\Big(\frac{\mathrm{cap}(F)}{\mathrm{cap}(\bar
		{\mathcal G}_r(\by))}\Big)^{d/(d-2)}\le G(\tilde\gamma(r))^{d/(d-2)}=\gamma(r),
\end{eqnarray*}
i.e., $F\in\mathcal{M}_{\gamma(r)}(\by,r)$. Thus,
$\mathcal{N}_{\tilde\gamma(r)}(\by,r)\subseteq\mathcal{M}_{\gamma(r)}(\by,r)$.
Hence
\begin{equation*}
\inf_{F\in\mathcal{N}_{\tilde\gamma(r)}(\by,r)}\int_{\mathcal{G}_r(\by)\setminus
	F}V(\e)\, \mathrm{d}\e\ge
\inf_{F\in\mathcal{M}_{\gamma(r)}(\by,r)}\int_{\mathcal{G}_r(\by)\setminus
	F}V(\e)\, \mathrm{d}\e.
\end{equation*}
Therefore in view of condition \eqref{cndlebesgue}, condition \eqref{cndmolch1} is satisfied with $\tilde\gamma(r)$ insyead of $\gamma(r)$. 
Hence by Theorem \ref{thMazSh} the spectrum of the operator $H$ is discrete.
Theorem \ref{thcondlebesgue} is proven.	
\end{proof}

\subsection{Proof of Theorem \ref{thcondlebesrear}}
\begin{proof}
	Let us take $\theta>1$, $\tilde\gamma(r)=\gamma(r)/\theta$ and $\sigma(r)=(1-\tilde\gamma(r))\mathrm{mes}_d(\mathcal{G}_r(0))$.
	Since  $\gamma(r)$ satisfies conditions \eqref{cndtildgam}, then $\tilde\gamma(r)$ satisfies these conditions. 
	Denote briefly for $t\in(0,1)$: $\mathcal{E}(t,\,\by\,,r):=\mathcal{E}(t,\,\mathcal{G}_r(\by),\,\mathrm{mes}_d)$
(Problem \ref{extrprobl}).
	 In view of Proposition \ref{prsolextrprob} and  estimate \eqref{estJR1} (Proposition \ref{lmJR}) with $W(\e)=V(\e)$, $X=\mathcal{G}_r(\by)$ and
	$t=\delta(r)$, we have:
	\begin{eqnarray*}\label{estJvfrbelow}
&&	\hskip-5mm\inf_{F\in\mathcal{M}_{\tilde\gamma(r)}(\by,r)}\int_{\mathcal{G}_r(\by)\setminus
		F}V(\e)\, \mathrm{d}\e\ge\inf_{G\in\mathcal{E}((1-\tilde\gamma(r))\mathrm{mes}_d(\mathcal{G}_r(\by)),\,\by,\,r)}\int_{G}V(\e)\, \mathrm{d}\e\ge\\
&&	\hskip-5mm\frac{\theta-1}{\theta}\delta(r)V^\star(\delta(r),\by,r).
	\end{eqnarray*}
	This estimate, condition \eqref{cndlebesrear} and Theorem \ref{thcondlebesgue} imply the desired claim.
	Theorem  \ref{thcondlebesrear} is proven.
\end{proof}

\subsection{Proof of Theorem \ref{prlogmdensdoubrearr}}

\begin{proof}
Let us take  $r\in(0,1)$ and a cube $Q_{r}(\by)$. It is clear that this cube intersects 
not more than $2^d$ adjacent cubes $Q_1(\vec l_k)\,(\vec l_k\in\Z^d)$  and among them
there is a cube $Q_1(\vec l_k)$ such that for some $\tilde\by\in Q_1(\vec l_k)$: $Q_{\tilde r}(\tilde\by)\subseteq Q_1(\vec l_k)\cap Q_{r}(\by)$
with $\tilde r=r/2$. Thus, for any $\by\in\R^d$ we can choose $\vec l(\by,r)\in Z^d$ such that
\begin{equation}\label{incltointersect}
Q_{\tilde r}(\tilde\by)\subseteq Q_r(\by)\cap Q_1(\vec l(\by,r)).
\end{equation}
Denote
	\begin{equation}\label{dfhatgamrKnew}
	\hat\gamma(\rho,K)=K\gamma(\theta \rho/m^2)\theta^d\quad (K>0).
	\end{equation}
	By Lemma \ref{lminclrear2} and  Lemma \ref{prestbarVstarbelow} with 
	 $W(\e)=V(\e)$, we get that for some  $K>0$ and $j\in\{1,2,\dots,n\}$ with
	$n=\big[\log_m\big(\frac{1}{\theta\tilde r}\big)\big]+2$  
	\begin{eqnarray}\label{ineqforrearV1}
&&V^\star\big(\hat\gamma(\tilde r,K),\,Q_{ r}(\by)
	 \big)\ge\nonumber\\
	 && V^\star\big(\hat\gamma(\tilde r,K),\,Q_{\tilde r}(\tilde\by)
	\big)\ge\min_{\vec\xi:\,Q(\vec\xi,\,n)\subseteq F_j} V^\star\big(\gamma(m^{-n}),\,\vec\xi\,,n),
	\end{eqnarray}
	where $\bar\gamma(r)=\kappa\hat\gamma(r/\sqrt{d},\,K)$ and $F_j$ is a non-empty union of cubes $Q(\vec\xi,\,n)$ with $\vec\xi\in m^{-n}\cdot\Z^d$, such that $F_j\subseteq Q_{\bar r}(\tilde \by)\cap D_j(\vec l(\by,r))$. Notice that since the function $\gamma(r)$ satisfies conditions \eqref{cndtildgam}, the function $\bar\gamma(r)$ satisfies this condition too for some $r_0>0$.
	Then in view of estimate \eqref{ineqforrearV1},  the property \eqref{incltointersect} and condition \eqref{cndlogdensdoubrearrprev}, condition \eqref{cndlebesrear}
	of Theorem  \ref{thcondlebesrear} is satisfied with $\gamma(r)=\bar\gamma(r)$. 
	Then the spectrum of the operator $H=-\Delta+V(\e)\cdot$ is discrete. Theorem \ref{prlogmdensdoubrearr} is proven.
\end{proof}

In the proof of Theorem \ref{prlogmdensdoubrearr} we have used the following claims, proved in \cite{Zel1}:

\begin{lemma}\label{prestbarVstarbelow}[\cite{Zel1}, Lemma 4.3]
	Suppose that in a cube $Q_1(\vec l)\,(\vec l\in\Z^d)$ there is  a sequence of  subsets $\{D_n(\vec l)\}_{n=1}^\infty$ forming in it a $(\log_m,\,\theta)$-dense system.
	Let $\gamma:\,(0,\,r_0)\rightarrow\R$ be a monotone non-increasing function with $r_0=\min\{1,\,1/(m^2\theta)\}$. Then 
	for some  $K>0$ and for
	any cube $Q_r(\by)\subset Q_1(\vec l)$ there are $j\in\{1,2,\dots,n\}$ with
	\begin{equation}\label{dfnlog1}
	n=\big[\log_m\big(\frac{1}{\theta r}\big)\big]+2
	\end{equation}
	and a non-empty set $F_j\subseteq Q_r(\by)\cap D_j(\vec l)$, which is a union of cubes 
	$Q(\vec\xi\,,n)$ with $\vec\xi\in m^{-n}\cdot\Z^d$,
	such that for any nonnegative function $W\in L_1(Q_r(\by))$ the inequality
	\begin{eqnarray}\label{ineqWstaryrxin}
	W^\star(\hat\gamma(r,K),Q_r(\by))\ge
	\min_{\vec\xi:\;Q(\vec\xi,\,n)\subseteq F_j}W^\star\big(\gamma(m^{-n}),\,\vec\xi,\,n)\big) 
	\end{eqnarray}
	is valid, where the function $\hat\gamma(r,K)$ is defined by \eqref{dfhatgamrKnew}.
\end{lemma}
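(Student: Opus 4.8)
The plan is to reduce the claim to a distribution-function inequality, after first using the $(\log_m,\theta)$-density to locate inside $Q_r(\by)$ a controlled family of grid-aligned $m$-adic cubes all contained in a single set $D_j(\vec l)$. First I would apply Definition \ref{dfdenslogmtetpart1} to the cube $Q_r(\by)\subseteq Q_1(\vec l)$; this is legitimate because $r<r_0=\min\{1,1/(m^2\theta)\}$. It produces an index $j\in\{1,\dots,[\log_m(1/(\theta r))]\}$, a regular parallelepiped $\Pi\subseteq D_j(\vec l)$, and a cube $Q_{\theta r}(\bs)$ with $Q_{\theta r}(\bs)\subseteq\Pi\cap Q_r(\by)$. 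Since $j\le[\log_m(1/(\theta r))]\le n$ for $n$ given by \eqref{dfnlog1}, the index is in the required range $\{1,\dots,n\}$.

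Next I would define $F_j$ as the union of all $m$-adic cubes $Q(\vec\xi,n)$ (with $\vec\xi\in m^{-n}\Z^d$, side $m^{-n}$) that are contained in $Q_{\theta r}(\bs)$. The quantitative heart of the construction is the choice $n=[\log_m(1/(\theta r))]+2$, which forces $m^{-n}\in[\theta r/m^2,\,\theta r/m)$; thus each grid cube has side strictly below $\theta r/m$, so the side $\theta r$ of $Q_{\theta r}(\bs)$ exceeds that of a grid cube by a factor larger than $m$. This headroom guarantees both that at least one aligned grid cube fits inside $Q_{\theta r}(\bs)$, so $F_j$ is non-empty, and that the number $N$ of such cubes admits a lower bound depending only on $m,d$ (for integer $m\ge 2$, at least $(m-1)^d$), uniform in $\by$ and $r$. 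Because $Q_{\theta r}(\bs)\subseteq\Pi\cap Q_r(\by)$ and $\Pi\subseteq D_j(\vec l)$, we also obtain $F_j\subseteq Q_r(\by)\cap D_j(\vec l)$, as required.

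Then I would carry out the rearrangement comparison. Set $s_0:=\min_{\vec\xi:\,Q(\vec\xi,n)\subseteq F_j}W^\star(\gamma(m^{-n}),\vec\xi,n)$. By the definition \eqref{dfSVyrdel}--\eqref{dfLVsry} of the non-increasing rearrangement and the down-set structure of $\{s:\lambda^\star(s)\ge\gamma(m^{-n})\}$, for each grid cube $Q(\vec\xi,n)\subseteq F_j$ and every $s<s_0$ one has $\mathrm{mes}_d\{\e\in Q(\vec\xi,n):W(\e)\ge s\}\ge\gamma(m^{-n})$. Summing over the $N$ essentially disjoint grid cubes yields $\mathrm{mes}_d\{\e\in F_j:W(\e)\ge s\}\ge N\gamma(m^{-n})$, and since $F_j\subseteq Q_r(\by)$ this gives $\lambda^\star(s,W,Q_r(\by))\ge N\gamma(m^{-n})$ for all $s<s_0$; hence $W^\star(N\gamma(m^{-n}),Q_r(\by))\ge s_0$. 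It remains to absorb the count into the constant: since $m^{-n}\ge\theta r/m^2$, monotonicity of $\gamma$ gives $\gamma(m^{-n})\ge\gamma(\theta r/m^2)$, so choosing $K\le N/\theta^d$ (admissible since $N$ is bounded below uniformly) ensures $\hat\gamma(r,K)=K\theta^d\gamma(\theta r/m^2)\le N\gamma(m^{-n})$. As $W^\star(\cdot,Q_r(\by))$ is non-increasing in its first argument, this yields $W^\star(\hat\gamma(r,K),Q_r(\by))\ge W^\star(N\gamma(m^{-n}),Q_r(\by))\ge s_0$, which is exactly \eqref{ineqWstaryrxin}.

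The main obstacle is the discretization step: the density property furnishes only an \emph{arbitrarily placed} cube $Q_{\theta r}(\bs)$, whereas $F_j$ must be assembled from cubes aligned to the fixed grid $m^{-n}\Z^d$. Securing a lower bound on the count $N$ that is uniform in $(\by,r)$ — so that a single $K$ works for every admissible cube — is the crux, and this is precisely what the ``$+2$'' in \eqref{dfnlog1} buys, by forcing the side-length ratio $\theta r/m^{-n}>m$. The rearrangement estimate itself is then routine, its only inputs being the down-set structure of the superlevel thresholds, the additivity of Lebesgue measure over the disjoint grid cubes, and the one-sided comparison of $\gamma(m^{-n})$ with $\gamma(\theta r/m^2)$ coming from the monotonicity of $\gamma$.
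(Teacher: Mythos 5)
The paper does not actually reproduce a proof of this lemma (it is imported verbatim from \cite{Zel1}, Lemma 4.3), so I assess your argument on its own merits. Your construction — invoking Definition \ref{dfdenslogmtetpart1} on $Q_r(\by)$ to obtain $j\le\big[\log_m\big(\frac{1}{\theta r}\big)\big]\le n$ and a cube $Q_{\theta r}(\bs)\subseteq\Pi\cap Q_r(\by)$ with $\Pi\subseteq D_j(\vec l)$; taking $F_j$ to be the union of the generation-$n$ grid cubes inside $Q_{\theta r}(\bs)$; using the ``$+2$'' in \eqref{dfnlog1} to force $m^{-n}\in[\theta r/m^2,\,\theta r/m)$ and hence a count $N\ge(m-1)^d\ge 1$ uniform in $(\by,r)$; and then running the superlevel-set comparison $\lambda^\star(s,W,Q_r(\by))\ge N\gamma(m^{-n})$ for all $s<s_0$ — is sound and is evidently the intended route.

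One step, as written, does not follow from the hypothesis you quoted. You assert that $m^{-n}\ge\theta r/m^2$ together with the monotonicity of $\gamma$ gives $\gamma(m^{-n})\ge\gamma(\theta r/m^2)$. For a monotone \emph{non-increasing} $\gamma$ — which is what the lemma states — the inequality goes the other way, $\gamma(m^{-n})\le\gamma(\theta r/m^2)$, and then no uniform $K>0$ can guarantee $K\theta^d\gamma(\theta r/m^2)\le N\gamma(m^{-n})$ for all small $r$ (a rapidly decaying $\gamma$ defeats any fixed $K$). The inequality you use is exactly what a \emph{non-decreasing} $\gamma$ yields, and indeed Theorem \ref{prlogmdensdoubrearr} — the only place the lemma is applied, with $\gamma$ satisfying \eqref{cndtildgam} — assumes $\gamma$ non-decreasing, so ``non-increasing'' in the lemma's hypothesis is almost certainly a misprint carried over from \cite{Zel1}. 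You should either say explicitly that you are using the non-decreasing direction (flagging the misprint) or restate the hypothesis; as literally written, your monotonicity step contradicts the assumption you invoke. Everything else — the uniform lower bound on $N$ and non-emptiness of $F_j$, the down-set structure of $\{s:\lambda^\star(s)\ge t\}$, the additivity of Lebesgue measure over the essentially disjoint grid cubes, and the monotonicity of $W^\star$ in its first argument — is correct.
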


\begin{lemma}\label{lminclrear2}[\cite{Zel1}, Lemma 4.5]
	Let $\Omega_1$ and $\Omega_2$ be measurable subsets of $\R^d$ such that $\Omega_1\subseteq\Omega_2$ and $W(\e)$ be a non-negative measurable function 
	defined on $\Omega_2$. Then for any $t>0$ the inequality
	$W^\star(t,\Omega_1)\le W^\star(t,\Omega_2)$
	is valid.
\end{lemma}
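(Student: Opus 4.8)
The plan is to derive the inequality directly from the monotonicity of Lebesgue measure combined with the supremum characterization \eqref{dfSVyrdel} of the non-increasing rearrangement. First I would fix a level $s>0$ and compare the two superlevel sets. Since $W$ is a nonnegative function defined on all of $\Omega_2$ and $\Omega_1\subseteq\Omega_2$, we have
\begin{equation*}
\mathcal{L}^\star(s;W;\Omega_1)=\{\e\in\Omega_1:\,W(\e)\ge s\}\subseteq\{\e\in\Omega_2:\,W(\e)\ge s\}=\mathcal{L}^\star(s;W;\Omega_2).
\end{equation*}
By monotonicity of $\mathrm{mes}_d$ this inclusion yields $\lambda^\star(s;W;\Omega_1)\le\lambda^\star(s;W;\Omega_2)$ for every $s>0$, where $\lambda^\star(s;W;\Omega_i)=\mathrm{mes}_d\big(\mathcal{L}^\star(s;W;\Omega_i)\big)$ as in \eqref{dfLVsry}.

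Next I would transfer this inequality of the distribution functions to the rearrangements themselves. Fix $t>0$. If $s>0$ satisfies $\lambda^\star(s;W;\Omega_1)\ge t$, then a fortiori $\lambda^\star(s;W;\Omega_2)\ge\lambda^\star(s;W;\Omega_1)\ge t$, so such an $s$ also belongs to the index set defining $W^\star(t,\Omega_2)$. Hence
\begin{equation*}
\{s>0:\,\lambda^\star(s;W;\Omega_1)\ge t\}\subseteq\{s>0:\,\lambda^\star(s;W;\Omega_2)\ge t\},
\end{equation*}
and taking suprema, which preserves inclusion of sets, gives $W^\star(t,\Omega_1)\le W^\star(t,\Omega_2)$, the desired assertion.

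I do not expect any serious obstacle here, as the claim is essentially a monotonicity property of rearrangement in the underlying domain. The only point deserving a word of care is the passage from the pointwise-in-$s$ inequality for $\lambda^\star$ to the inequality of the suprema: it suffices to observe the inclusion of the two index sets, since the supremum over a set never exceeds the supremum over any superset. I would also note the degenerate case in which the index set for $\Omega_1$ is empty (for instance when $\mathrm{mes}_d(\Omega_1)<t$), where $W^\star(t,\Omega_1)$ is set to $0$ by convention and the inequality holds trivially because $W^\star(t,\Omega_2)\ge 0$.
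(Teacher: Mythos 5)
Your proof is correct: the inclusion of superlevel sets gives $\lambda^\star(s;W;\Omega_1)\le\lambda^\star(s;W;\Omega_2)$ for every $s>0$, the index sets in definition \eqref{dfSVyrdel} are therefore nested, and the suprema are ordered accordingly. The present paper does not reprove this lemma (it only cites \cite{Zel1}, Lemma 4.5), but your argument is the standard monotonicity proof and is essentially the intended one; your remark on the empty index set is a sensible precaution.
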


\subsection{Proof of Theorem \ref{thestLagr}}
\begin{proof}
Let us use the reformulation of Problem \ref{extrprobl} given in Remark \ref{rembinprob}.
Denote $u(x)=2\chi_\mathrm{E}(x)-1$. Then it is easy to see that
\begin{equation*}\label{relaxIW}
I_W(t,\,X,\,\mu)\ge(1/2)\inf_{u\in\mathcal{F}(t,X,\,\mu)}\int_X W(x)(1+u(x))\,
\mu(\mathrm{d}x),
\end{equation*}
where 
\begin{eqnarray}\label{dfFtXmu}
&&\mathcal{F}(t,X,\,\mu):=\nonumber\\
&&\{u\in L_2(X,\mu):\;u^2(x)\le1\;\mathrm{a.e.}\;\mathrm{in}\,X,\;\int_X u(x)\,\mu(dx) \ge\sigma(t)\}.
\end{eqnarray}
Consider the Lagrangian:
\begin{eqnarray}\label{dfLagr}
&&L(u,\lambda,\nu)=\int_X W(x)(1+u(x))\,
\mu(\mathrm{d}x)-\int_X \lambda(x)(1-u^2(x))\,\mu(dx)-\nonumber\\
&&\nu\big(\int_X u(x)\,\mu(dx)-\sigma(t)\big),
\end{eqnarray}
where $\nu\ge 0$, $\lambda\in L_2^+(X,\,\mu)=\{\lambda\in L_2(X,\mu):\;\lambda(x)\ge 0\;\mathrm{a.e.}\;\mathrm{in}\,X \}$. It is clear that
\begin{equation*}
I_W(t,\,X,\,\mu)\ge(1/2)\sup_{\lambda\in L_1^+(X,\mu),\,\nu>0}\;\inf_{u\in L_2(X,\mu)}L(u,\lambda,\nu).
\end{equation*}
We shall use an easier version of the Lagrangian relaxation, assuming that $\lambda(x)\equiv\lambda=const>0$ and $\nu>0$. Then
\begin{equation}\label{estIwLagr}
I_W(t,\,X,\,\mu)\ge(1/2)\sup_{\lambda>0,\,\nu>0}m(\lambda,\nu),
\end{equation}
where
\begin{equation}\label{dfmlamnu}
m(\lambda,\nu)=\inf_{u\in L_2(X,\mu)}L(u,\lambda,\nu).
\end{equation}
Let us find $m(\lambda,\nu)$. To this end we shall find critical points of the functional $L(u,\lambda,\nu)$ for fixed $\lambda>0,\,\nu>0$. We see that its Gateaux differential by $u$ is:
\begin{eqnarray*}
&&D_uL(u,\lambda,\nu)(\eta)=\\
&&\int_XW(x)\eta(x)\,\mu(dx)+2\lambda\int_Xu(x)\eta(x)\,\mu(dx)-\nu\int_x\eta(x)\,\mu(dx),
\end{eqnarray*}
where $\eta\in L_2(X,\mu)$. Hence its gradient by $u$ is:
\begin{equation*}
\nabla_uL(u,\lambda,\nu)=W(x)-\nu+2\lambda u(x).
\end{equation*}
Therefore for any fixed $\lambda>0,\,\nu>0$ the functional $L(u,\lambda,\nu)$ has only one critical point in $L_2(X,\mu)$:
\begin{equation*}
u_\star(x,\lambda,\nu)=\frac{\nu-W(x)}{2\lambda}.
\end{equation*}
Consider the second Gateaux differential of $L(u,\lambda,\nu)$ by $u$: 
\begin{equation*}
D_u^2L(u,\lambda,\nu)(\eta,\eta)=2\lambda\int_X(\eta(x))^2\mu(dx)>0,\quad
\mathrm{if}\quad \Vert\eta\Vert_{L_2(X,\mu)}=1.
\end{equation*}
This means that the functional $L(u,\lambda,\nu)$ is strictly convex on $L_2(X,\,\mu)$ for any fixed $\lambda>0,\,\nu>0$, Hence the point $u_\star(x,\lambda,\nu)$ yields the minimum for this  functional.   Therefore, in view of \eqref{dfmlamnu}, we have:
\begin{eqnarray}\label{formformlamnu}
&&\hskip-10mm m(\lambda,\nu)=\int_XW(x)\Big(1+\frac{\nu-W(x)}{2\lambda}\Big)\,\mu(dx)+\nonumber\\
&&\hskip-10mm\lambda\int_X\Big(\frac{(\nu-W(x))^2}{4\lambda^2}-1\Big)\,\mu(dx)-\nu\Big(\int_X\frac{\nu-W(x)}{2\lambda}\,\mu(dx)-\sigma(t)\Big)=\nonumber\\
&&\hskip-10mm \big(1+\frac{\nu}{2\lambda}\big)\mathrm{E}(W)-\frac{1}{2\lambda}\mathrm{E}(W^2)-\lambda+\frac{\nu^2}{4\lambda}-
\frac{\nu}{2\lambda}\mathrm{E}(W)+\frac{1}{4\lambda}\mathrm{E}(W^2)-\nonumber\\
&&\hskip-10mm\frac{\nu^2}{2\lambda}+\frac{\nu}{2\lambda}\mathrm{E}(W)+\nu\sigma(t)=\big(1+\frac{\nu}{2\lambda}\big)\mathrm{E}(W)-\frac{1}{4\lambda}\mathrm{E}(W^2)-\lambda-\frac{\nu^2}{4\lambda}+
\nonumber\\
&&\hskip-10mm\nu\sigma(t)=\mathrm{E}(W)-\frac{1}{4\lambda}\big(\nu-\mathrm{E}(W)\big)^2-\frac{1}{4\lambda}\Big(\mathrm{E}(W^2)-
(\mathrm{E}(W))^2\Big)-\lambda+\nonumber\\
&&\hskip-10mm\nu\sigma(t)
\end{eqnarray}
Now let us find $\sup_{\lambda>0,\,\nu>0}m(\lambda,\nu)$. We see that
\begin{eqnarray}\label{derivmlamnu}
&&\frac{\partial}{\partial\lambda}m(\lambda,\nu)=\frac{1}{4\lambda^2}\Big[\big(\nu-\mathrm{E}(W)\big)^2+
\mathrm{E}(W^2)-(\mathrm{E}(W))^2\Big]-1,\nonumber\\
&&\frac{\partial}{\partial\nu}m(\lambda,\nu)=\frac{1}{2\lambda}\mathrm{E}(W)-\frac{\nu}{2\lambda}+\sigma(t).
\end{eqnarray}
Hence we have the following system of equations for critical points of the function $m(\lambda,\nu)$:
\begin{eqnarray*}
&&\big(\nu-\mathrm{E}(W)\big)^2+
\mathrm{E}(W^2)-(\mathrm{E}(W))^2=4\lambda^2,\nonumber\\
&&\nu-\mathrm{E}(W)=2\lambda\sigma(t).
\end{eqnarray*}
Suppose that $\mathrm{E}(W^2)-(\mathrm{E}(W))^2>0$. We see that in this case the function $m(\lambda,\nu)$ has the unique critical point in the domain $\lambda>0,\,\nu>0$:
\begin{equation*}
\lambda_\star=\frac{\sqrt{\mathrm{E}(W^2)-(\mathrm{E}(W))^2}}{2\sqrt{1-\sigma^2(t)}},\quad \nu_\star=\mathrm{E}(W)+2\lambda_\star\sigma(t),
\end{equation*}
Then in view of \eqref{formformlamnu},
\begin{eqnarray}\label{mlamstnust}
&&\hskip-5mm m(\lambda_\star,\,\nu_\star)=\mathrm{E}(W)-\lambda_\star(1+\sigma^2(t))-\frac{1}{4\lambda_\star}\Big(\mathrm{E}(W^2)-
(\mathrm{E}(W))^2\Big)+\nonumber\\
&&\hskip-5mm\mathrm{E}(W)\sigma(t)+2\lambda_\star\sigma^2(t)=\mathrm{E}(W)(1+\sigma(t))-\lambda_\star(1-\sigma^2(t))-\nonumber\\
&&\hskip-5mm\frac{1}{4\lambda_\star}\Big(\mathrm{E}(W^2)-
(\mathrm{E}(W))^2\Big)=(1+\sigma(t))\mathrm{E}(W)-\\
&&\hskip-5mm\sqrt{1-\sigma^2(t)}\sqrt{\mathrm{E}(W^2)-(\mathrm{E}(W))^2}=(1+\sigma(t))\mathrm{E}(W)-\sqrt{1-\sigma^2(t)}\cdot\mathrm{Dev}(W).\nonumber
\end{eqnarray}
Let us show that $m(\lambda_\star,\,\nu_\star)$ is the maximal value of $m(\lambda,\nu)$ in the domain $\lambda>0,\,\nu>0$. To this end consider the Hessian matrix $\mathrm{Hes}(m)(\lambda,\nu)$ of $m(\lambda,\nu)$ in this domain. Using \eqref{derivmlamnu}, we have:
\begin{eqnarray*}\label{Hessmlannu}
&&-\mathrm{Hes}(m)(\lambda,\nu)=\left(\begin{array}{ll}
-\frac{\partial^2}{\partial\lambda^2}m(\lambda,\nu),&-\frac{\partial^2}{\partial\lambda\partial\nu}m(\lambda,\nu)\\
-\frac{\partial^2}{\partial\lambda\partial\nu}m(\lambda,\nu),&-\frac{\partial^2}{\partial\nu^2}m(\lambda,\nu)
\end{array}\right)=\nonumber\\
&&\left(\begin{array}{ll}
\frac{1}{2\lambda^3}\Big[\big(\nu-\mathrm{E}(W)\big)^2+
\mathrm{E}(W^2)-(\mathrm{E}(W))^2\Big],&\frac{1}{2\lambda^2}(\nu-\mathrm{E}(W))\\
\frac{1}{2\lambda^2}(\mathrm{E}(W)-\nu),&\frac{1}{2\lambda}
\end{array}\right)
\end{eqnarray*}
We see that $-\frac{\partial^2}{\partial\lambda^2}m(\lambda,\nu)> 0$, $-\frac{\partial^2}{\partial\nu^2}m(\lambda,\nu)>0$ and
\begin{equation*}
\det(-\mathrm{Hes}(m)(\lambda,\nu))> 0,
\end{equation*}
i.e., the matrix $-\mathrm{Hes}(m)(\lambda,\nu)$ is positive-definite in any point of the convex domain $\lambda>0$, $\nu>0$. This means that the function $m(\lambda,\nu)$ is strictly concave in this domain. 
 Thus, in the case where $\mathrm{Dev}(W)>0$ the equality holds: $m(\lambda_\star,\nu_\star)=\max_{\lambda>0,\nu>0}m(\lambda,\nu)$. Hence in view of 
\eqref{estIwLagr} and \eqref{mlamstnust} the inequality \eqref{estLagr} is valid. It is clear that if
$\mathrm{Dev}(W)=0$, then $W(x)=\mathrm{E}(W)$ for $\mu$-almost all $x\in X$. In view of definition \eqref{dfIVOm}, in this case $I_W(t,\,X,\,\mu)\ge t\cdot \mathrm{E}(W)$. Since $\sigma(t)=2t-1$, the inequality \eqref{estLagr} is valid also in this case. Theorem \ref{thestLagr} is proven.
\end{proof}

\subsection{Proof of Theorem \ref{thdiscLagr}}
\begin{proof}
	In view of estimate \eqref{estLagr} (Theorem \ref{thestLagr}) with $t=1-\gamma(r)/2$ and $\sigma(t)=2t-1=1-\gamma(r)$,
\begin{eqnarray*}
&&\inf_{F\in\mathcal{M}_{\gamma(r)/2}(\by,r)}\int_{\mathcal{G}_r(\by)\setminus
	F}V(\e)\, \mathrm{d}\e=\mathrm{mes}_d\big(\mathcal{G}_r(\by)\big)\times\\
&&\inf_{F\in\mathcal{M}_{\gamma(r)/2}(\by,r)}\int_{\mathcal{G}_r(\by)\setminus
	F}V(\e)\,m_{\by,r}(\mathrm{d}\e)\ge\\
&&\mathrm{mes}_d\big(\mathcal{G}_r(\by)\big)\frac{1+\sigma(t)}{2}\Big(\mathrm{E}_{\by,r}(V)-\sqrt{\frac{1-\sigma(t)}{1+\sigma(t)}}\cdot\mathrm{Dev}_{\by,r}(V)\Big)\ge\\
&&\mathrm{mes}_d\big(\mathcal{G}_r(\by)\big)(1-\gamma(r)/2)\Big(\mathrm{E}_{\by,r}(V)-\sqrt{\gamma(r)}\cdot\mathrm{Dev}_{\by,r}(V)\Big).
\end{eqnarray*}
This estimate, condition \eqref{cnddisclarg} and Theorem \ref{thcondlebesgue} (with $\gamma(r)/2$ instead of $\gamma(r)$) imply the desired claim. Theorem \ref{thdiscLagr} is proven. 
\end{proof}

\subsection{Proof of Theorem \ref{thperturb}}

\begin{proof}
In view of definition \eqref{defDsW} and
	condition \eqref{sigmain} there exists $R>0$ such that for any
	$\by\in\R^d\setminus B_R(0)$ there is a vector field
	$\Gamma_{\by,r}\in {\mathcal D}_{\by,\,r}(W)$ such that
	$\Vert \Gamma_{\by,r}\Vert_d\le\sigma/(2C(d))$. Taking into
	account that
	$W(\e)=\mathrm{div}\vec\Gamma_{\by,r}(\e)\;(\e\in Q_r(\by))$
	and using claim (ii) of Proposition \ref{lmest}, we obtain:
	\begin{eqnarray*}
		\int_{Q_r(\by)}W(\e)|u(\e)|^2\, \mathrm{d}\e\le
		\sigma\int_{Q_r(\by)}|\nabla u(\e)|^2\,\mathrm{d}\e,
	\end{eqnarray*}
	hence
	\begin{eqnarray*}
		&&\hskip-5mm(H u,u)_{Q_r(\by)}=\int_{Q_r(\by)}\Big(|\nabla
		u(\e)|^2+\big(V_0(\e)+W(\e)\big)|u(\e)|^2\Big)|u(\e)|^2\,\mathrm{d}\e\ge\nonumber\\
		&&\hskip-5mm\big(1-\sigma\big)\int_{Q_r(\by)}|\nabla u(\e)|^2\,
		\mathrm{d}\e+\int_{Q_r(\by)}V_0(\e)|u(\e)|^2\,\mathrm{d}\e=(H_\sigma
		u,u)_{Q_r(\by)}.
		\end{eqnarray*}
	This inequality and Proposition \ref{prloc} imply the desired
	claims.	Theorem \ref{thperturb} is proven.
\end{proof}

\subsection{Proof of Theorem \ref{thFour}}
\begin{proof}
	Consider the step function
\begin{equation}\label{dfnonpert}
V_0(\e)=\int_{Q_1(\vec l)}V(\bs)\,\mathrm{d}\bs\quad\mathrm{for}\quad\e\in Q_1(\vec l).
\end{equation}
In view of condition \eqref{intQitendinfty},	$\lim_{|\e|\rightarrow\infty}V_0(\e)=\infty$. Hence the spectrum of operator $H_\theta=-\theta\Delta+V_0(\e)\cdot\;(\theta>0)$ is discrete \cite{Gl}. Denote $W(\e)=V(\e)-V_0(\e)$. We see from \eqref{dfnonpert} that for any $\vec l\in\Z^d$ $\int_{Q_1(\vec l)}W(\e)\,\mathrm{d}\e=0$. Furthermore, since $v\in L_{2,\,loc}(\R^d)$, then $W\in L_{2,\,loc}(\R^d)$.
Hence in view of condition \eqref{cndtransfour}
and Proposition \ref{prdiveq}, for any $\vec l\in\Z^d$ divergence equation $\mathrm{div}\vec\Gamma=W$
has in the cube $Q_1(\vec l)$ a potential solution $\Gamma_{\vec l}(\e)$ belonging to the space $L_d(Q_1(\vec l))$ and 
$\limsup_{|\vec l|\rightarrow\infty}\Vert\vec \Gamma_{\vec l}\Vert_d<\frac{1}{2^{d+1}C(d)}$. 
Notice that if $r_0\in(0,\,1)$, any cube $Q_{r_0}(\by)$ intersects at most $2^d$ cubes $Q_1(\vec l)\,(\vec l\in\Z^d)$. Hence perturbation $W(\e)$ of the potential $V_0(\e)$
satisfies condition \eqref{cndpert} of Theorem \ref{thperturb}, therefore the spectrum of H is discrete and bounded below. Theorem \ref{thFour} is proven.
\end{proof}

\subsection{Proof of Theorem \ref{thlattsupp}}
\begin{proof}
(i) Consider the step functiom $V_0(\e)$, defined by \eqref{dfnonpert} and denote $W(\e)=V(\e)-V_0(\e)$.	In view of condition \eqref{cndNtimesrd} and Remark \ref{remonint}, condition \eqref{intQitendinfty} of Theorem \ref{thFour} is fulfilled. Furthermore, conditions \eqref{SinL2}, \eqref{intSposit}, definitions \eqref{persumS}, \eqref{dfVrbysumm} imply that $W\in L_{2,\,loc}(\R^d)$.
 Using dilation rule for the Fourier transform $\mathcal{F}$ on $\T^d$ \cite{Kam}, we have that for the function $V_r(\e)$, defined by \eqref{persumS}, and a natural $m$ the equality
\begin{eqnarray*}
\mathcal{F}\big(V_r(m\cdot\e)\big)(\vec k)=\left\{\begin{array}{ll}
\mathcal{F}\big(V_r\big)(\vec k/m)& \mathrm{for}\quad\vec k\in m\cdot\Z^d,\\
0& \mathrm{otherwise}
\end{array}
\right.
\end{eqnarray*}
is valid. Furthermore, applying Poisson formula for Fourier transform on $\T^d$ of the periodic summation and using dilation and translation rules for Fourier transform on $\R^d$ \cite{Kam}, we have:
\begin{equation*}
\mathcal{F}\big(V_r\big)(\vec k)=\hat S\big((\cdot-\vec c)/r\big)(\vec k)=r^d\exp\big(-i2\pi\langle\vec c,\,\vec k\rangle\big)\hat S(r\vec k).
\end{equation*}
Denote by $W_{\vec l}(\e)$ and $V_{\vec l}(\e)$ the $1$-periodic continuations (by all the variables) of of the functions $W\vert_{Q_1(\vec l)}$ and $V\vert_{Q_1(\vec l)}$ from the cube $Q_1(\vec l)$ to the whole $\R^d$.
Then in view of condition \eqref{inclHr}-\eqref{dfHrveck}, 
\begin{eqnarray*}
&&\Big\Vert \frac{\mathcal{F}\big(W_{\vec l}\big)(\vec k)}{2\pi i|\vec k|^2}\vec k\Big\Vert_{l_{d^\prime}}=\Big\Vert\frac{\mathcal{F}\big(V_{\vec l}\big)(\vec k)}{2\pi i|\vec k|^2}\vec k\Big\Vert_{l_{d^\prime}(\Z^d\setminus\{\vec 0\})}=\nonumber\\
&&\mathcal{N}(\vec l)(r(\vec l))^d\Big(\sum_{\vec k\in m(\vec l)\cdot\Z^d\setminus\{\vec 0\} }\Big(\frac{|\hat S\big(r(\vec l)\vec k/m(\vec l)\big)|}{2\pi|\vec k|}\Big)^{d^\prime}\Big)^{1/d^\prime}=\nonumber\\
&&\frac{\mathcal{N}(\vec l)(r(\vec l))^d}{m(\vec l)}\Big(\sum_{\vec k\in\Z^d\setminus\{\vec 0\} }\Big(\frac{|\hat S\big(r(\vec l)\vec k\big)|}{2\pi|\vec k|}\Big)^{d^\prime}\Big)^{1/d^\prime}\le\frac{\mathcal{N}(\vec l)}{m(\vec l)}\bar H<\infty.
\end{eqnarray*}
Then condition \eqref{cndfracNlml} implies that the functions  $W_{\vec l}(\e)\,(\vec l\in\Z^d)$ 
satisfy condition \eqref{cndtransfour} of Theorem \ref{thFour}. Hence the spectrum of $H$ is discrete. Claim (i) is proven.

(ii) Let us take $S(\e)=\chi_{1/2}(\e)$. It is known that Fourier transform of the characteristic function of a ball is expressed via Bessel function (\cite{Gr}, p. 578). In particular, 
\begin{equation*}
\hat\chi_{1/2}(\vec\omega)=A(d)\frac{1}{(\pi|\vec\omega|)^{d/2}}J_{d/2}(\pi|\vec\omega|).,
\end{equation*}
where $A(d)=\frac{2\sqrt{\pi}v_{d-1}\Gamma\big((d-1)/2\big)}{2^{d/2}}$ and $v_{d-1}$ is the volume of $d-1$-dimensional unit ball. It is known  that for $\nu>0$ Bessel function $J_\nu(x)$ is continuous in $[]0,\,+\infty)$ 
and the following asymptotic formula is valid for $x\rightarrow+\infty$:
\begin{equation*}
J_\nu(x)=\sqrt{\frac{2}{\pi x}}\cos\big(x-\pi\nu/2-\pi/4\big)+O\big(\frac{1}{x^{3/2}}\big)
\end{equation*}
(\cite{Gr}. pp.168, 580). Hence there is a constant $\bar J(\nu)>0$  such that $|J_\nu(x)|\le\frac{\hat J(\nu)}{\sqrt{x}}$ for any $x>0$. 
Therefore for some constant $B(d)>0$ and any $\vec k\in\Z^d\setminus\{\vec 0\}$, $r\in(0,\,1)$
\begin{equation*}
r^d\frac{\hat S(r\vec k)}{2\pi|\vec k|}\le B(d)\frac{r^{(d-1)/2}}{|\vec k|^{(d+3)/2}}\le\frac{B(d)}{|\vec k|^{(d+3)/2}}.
\end{equation*}
Since the series $\sum_{\vec k\in\Z^d\setminus\{\vec 0\}}\Big(\frac{1}{|\vec k|^{(d+3)/2}}\Big)^{d^\prime} $ converges for $d<5$, condition \eqref{inclHr}
 is satisfied for $d\in\{3,\,4\}$. Claim (ii) is proven. Hence Theorem \ref{thlattsupp} is proven too.
 \end{proof} 

\subsection{Proof of Theorem \ref{thmeasinstcapRVOm}}

\begin{proof}
	Assume that $F\in\mathcal{N}_{\tilde\gamma(r)}(\by,r,\Omega)$, where $\tilde\gamma(r)=\big(\gamma(r)/G\big)^{(d-2)/d}$ and $G:=\Big(\frac{\bar r}{r_m}\Big)^d$, $\bar r:=\max_{\omega\in S^{d-1}}r(\omega)$, $r_m:=\min_{\omega\in S^{d-1}}r(\omega)$.  Notice that inclusion \eqref{inclGminOm},
	is equivalent to $E\subseteq\bar{\mathcal{G}}_r(\by)\cap\Omega$, where $E=\bar{\mathcal{G}}_r(\by)\setminus F$. Furthermore, in  the proof of Theorem \ref{thcondlebesgue}  we have shown using the isocapacity inequality that  condition \eqref{defNcap}	implies $\mathrm{mes}_d(F)\le\gamma(r)\mathrm{mes}_d(\mathcal{G}_r(\by))$, i.e., $\mathrm{mes}_d(E)\ge(1-\gamma(r))\mathrm{mes}_d(\mathcal{G}_r(\by))$. In particular, these arguments imply that if $\mathrm{mes}_d(\mathcal{G}_r(\by)\cap\Omega)<(1-\gamma(r))\,\mathrm{mes}_d(\mathcal{G}_r(\by))$, then 
	$\mathcal{N}_{\tilde\gamma(r)}(\by,r,\Omega)=\emptyset$, i.e.,
	\begin{equation*}
	\inf_{F\in\mathcal{N}_{\tilde\gamma(r)}(\by,r,\Omega)}\int_{{\mathcal G}_r(\by)\setminus
		F}V(\e)\, \mathrm{d}\e=+\infty.
	\end{equation*}
	Then in view of definition \eqref{dfRWyrOm}, these circumstances mean that
	\begin{equation*}
	\inf_{F\in\mathcal{N}_{\tilde\gamma(r)}(\by,r,\Omega)}\int_{{\mathcal G}_r(\by)\setminus
		F}V(\e)\, \mathrm{d}\e\ge R_V(\by,r,\Omega,\gamma(r)).
	\end{equation*}
	This estimate, condition \eqref{cndmeasinstcapcalGOm} and Theorem \ref{thMazShOm} imply the desired claim. Theorem \ref{thmeasinstcapRVOm} is proven
\end{proof}

\subsection{Proof of Corollary \ref{cordiscspecLaplace}}

\begin{proof}
	It is clear that the function $\tilde\gamma(r)=\gamma(r)/2$ satisfies condition \eqref{cndtildgam}. Furthermore, we have from \eqref{cnddiscspecLap}:
	\begin{equation*}
	\limsup_{|\by|\rightarrow\infty}\frac{\mathrm{mes}_d(\mathcal{G}_r(\by)\cap\Omega)}{\mathrm{mes}_d(\mathcal{G}_r(\by))}< 1-\tilde\gamma(r). 
	\end{equation*}
	This circumstance, definition \eqref{dfRWyrOm} and Theorem \ref{thmeasinstcapRVOm} imply the desired claim. Corollary \ref{cordiscspecLaplace} is proven.
\end{proof}

\subsection{Proof of Theorem \ref{thdiscLagrOm}}

\begin{proof}
Assume that $\mathrm{mes}_d(\mathcal{G}_r(\by)\cap\Omega)>(1-\gamma(r)/2)\,\mathrm{mes}_d(\mathcal{G}_r(\by))$.
In view of estimate \eqref{estLagr} (Theorem \ref{thestLagr}) with $t=1-\bar\gamma(\gamma(r),\Omega)$ and $\sigma(t)=2t-1=1-2\bar\gamma(\gamma(r),\Omega)$.
\begin{eqnarray*}
	&&\hskip-8mm\inf_{E\in\mathcal{P}_{\gamma(r)/2}(\by,r,\Omega)}\int_E V(\e)\,\mathrm{d}\e=\mathrm{mes}_d\big(\mathcal{G}_r(\by)\cap\Omega\big)\times\\
	&&\hskip-8mm\inf_{E\in\mathcal{P}_{\gamma(r)/2}(\by,r,\Omega)}\int_E V(\e)\,m_{d,r,\Omega}(\mathrm{d}\e)\ge\\
	&&\hskip-8mm\mathrm{mes}_d\big(\mathcal{G}_r(\by)\cap\Omega\big)\frac{1+\sigma(t)}{2}\Big(\mathrm{E}_{\by,r,\Omega}(V)-\sqrt{\frac{1-\sigma(t)}{1+\sigma(t)}}\cdot\mathrm{Dev}_{\by,r,\Omega}(V)\Big)\ge\\
	&&\hskip-8mm\mathrm{mes}_d\big(\mathcal{G}_r(\by)\big)(1-\gamma(r)/2)\Big(\mathrm{E}_{\by,r,\Omega}(V)-\sqrt{2\bar\gamma(\gamma(r),\Omega)}\cdot\mathrm{Dev}_{\by,r,\Omega}(V)\Big).
\end{eqnarray*}
Notice that if $\mathrm{mes}_d(\mathcal{G}_r(\by)\cap\Omega)=(1-\gamma(r)/2)\,\mathrm{mes}_d(\mathcal{G}_r(\by))$ (i.e., 
$\bar\gamma(\gamma(r),\Omega)=0$), then
\begin{equation*}
\mathrm{E}_{\by,r,\Omega}(V)=\frac{\int_{\mathcal{G}_r(\by)\cap\Omega}V(\e)\,\mathrm{d}\e}{(1-\gamma(r)/2)\,\mathrm{mes}_d(\mathcal{G}_r(\by))}
\end{equation*}
These circumstances, condition \eqref{cnddisclargOm} and Theorem \ref{thmeasinstcapRVOm} (with $\gamma(r)/2$) instead of $\gamma(r)$) imply the desired claim. Theorem \ref{thdiscLagrOm} is proven.
\end{proof}

\section{Some examples} \label{sec:examples}
\setcounter{equation}{0}
First of all, let us recall some examples of the $(\log_m,\,\theta)$-dense system (Definition \ref{dfdenslogmtetpart1}), constructed in \cite{Zel1}.

\begin{example}\label{ex1}
	Consider the classical middle third Cantor set $\mathcal{C}\subset[0,1]$, Let $I_{n,k}\;(n=1,2,\dots),\,k=1,2,\dots, 2^{n-1}$ be the closures 
	of intervals adjacent to $\mathcal{C}$. It is known that they are pairwise disjoint and for any fixed $n$ and each $k\in\{1,2,\dots, 2^{n-1}\}$ 
	$\mathrm{mes}_1(I_{n,k})=3^{-n}$. For fixed $n$ we shall number the intervals $I_{n,k}$ from the left to the right. Denote $D_n=\bigcup_{k=1}^{2^{n-1}}I_{n,k}$. 
	As we have shown in \cite{Zel1} (Example 5.1),  the sequence $\{D_n\}_{n=1}^\infty$ forms 
	in $[0,1]$ a $(\log_3,\,1/9)$-dense system. 
\end{example}

\begin{example}\label{ex2}
	Consider a cube $Q_1(\by)\subset\R^d$, represented in the form $Q_1(\by)=Q_1(\by_1)\times Q_1(\by_2)$, where 
	$Q_1(\by_1)\subset\R^{d_1}$ and $Q_1(\by_2)\subset\R^{d_2}$. Let $\{D_n\}_{n=1}^\infty$ be a sequence of subsets of 
	the cube $Q_1(\by_1)$, forming in it a $(\log_m,\theta)$-dense system. It is easy to see that the sequence
	$\{D_n\times Q_1(\by_2)\}_{n=1}^\infty$ forms in $Q_1(\by)$ a $(\log_m,\theta)$-dense system too.	
\end{example}

Now we shall consider counterexamples connected with conditions of discreteness of the spectrum
of the operator $H=-\Delta+V(\e)\cdot$, obtained above.

\begin{example}\label{ex4}
	In \cite{Zel1} we have constructed an example of the potential $V(\e)\ge 0$ which satisfies conditions 
	of Theorem \ref{prlogmdensdoubrearr} (hence the spectrum of the operator $H=-\Delta+V(\e)\cdot$ is discrete) , but the condition \eqref{cndGMD} of the criterion from \cite{GMD},
	formulated in Remark \ref{remGMD}, is not satisfied for it. In view of the technical mistake, admitted in  \cite{Zel1} and pointed in Remark \ref{remmistake}, we shall recall and correct this construction.
	Let us return to the sequence $\{D_n\}_{n=1}^\infty$ of subsets of the interval $[0,\,1]$ and considered in Example \ref{ex1}, and the following 
	sequence of subsets of the cube $Q_1(0)$:
	\begin{equation}\label{dfcalD}
	\mathcal{D}_n=D_n\times[0,\,1]^{d-1}.
	\end{equation} 
	Consider also the translations of the cube $Q_1(0)$ and the sets
	$\mathcal{D}_n$ by the vectors $\vec l=(l_1,l_2,\dots,l_d)\in\Z^d$: $Q_1(\vec l)=Q_1(0)+\{\vec l\}$, 
	\begin{equation}\label{dfcalDvecl}
	\mathcal{D}_n(\vec l)=\mathcal{D}_n+\{\vec l\}.
	\end{equation}
	The arguments of Examples \ref{ex1} and \ref{ex2} imply that for any fixed $\vec l\in\Z^d$  the sequence $\{\mathcal{D}_n(\vec l)\}_{n=1}^\infty$ 
	forms in $Q_1(\vec l)$ a $(\log_3,\,1/9)$-dense system.  For $\beta\in(0,1)$ consider on $\R$ the
	$1$-periodic function $\theta_\beta(x)$, defined on the interval
	$(0,1]$ in the following manner:
	\begin{equation}\label{dfthatbetnew}
	\theta_\beta(x)=\left\{\begin{array}{ll}
	1&\quad\mathrm{for}\quad x\in(0,\beta],\\
	0&\quad\mathrm{for}\quad x\in(\beta,1].
	\end{array}\right.
	\end{equation}
	Let us take
	\begin{equation}\label{alphain}
	\alpha\in\Big(0,\,2d/(d-2)\Big)
	\end{equation} 
	(in  \cite{Zel1} we have imposed the more restrictive condition $\alpha\in(0,\,2)$). Consider the following function, defined on $(0,1]$:
	\begin{eqnarray}\label{dfSigmaaNalphnew}
	\Sigma_{N,\,p,\alpha}(x):=\left\{\begin{array}{ll}
	0&\quad\mathrm{for}\quad x\in(0,\,1]\setminus\bigcup_{n=1}^\infty D_n,\\
	N\theta_\beta(3^px)\vert_{\beta=3^{-\alpha n}}&\quad\mathrm{for}\quad x\in
	D_n\;(n=1,2,\dots)
	\end{array}\right.
	\end{eqnarray}
	$(N>0,\,p\in\N)$. 
	Consider a function
	$\mathcal{N}:\Z^d\rightarrow\R_+$, satisfying the condition
	\begin{equation}\label{cndNl1new}
	\mathcal{N}(\vec l)\ge 1,\quad\mathcal{N}(\vec
	l)\rightarrow\infty\quad\mathrm{for}\quad |\vec
	l|_\infty\rightarrow\infty.
	\end{equation}
Let us construct the
	desired potential in the following manner:
	\begin{eqnarray}\label{dfpotentValnew}
	&&V_\alpha(\e):=\Sigma_{N,\,p,\,\alpha}(P_1(\e-\vec
	l))\vert_{N=\mathcal{N}(\vec l),\,p=|\vec l|_\infty+1}\quad
	\mathrm{for}\quad\vec l\in \Z^d\\
	&&\mathrm{and}\quad \e\in
	Q_1(\vec l),\nonumber
	\end{eqnarray}
	where the operator $P_1$ is defined in the following manner:
	\begin{equation}\label{dfP1}
	\mathrm{for}\quad
	\e=(x_1,x_2,\dots,x_d)\quad P_1\e:=x_1.
	\end{equation}
	It is clear that $V_\alpha\in L_{\infty,\,loc}(\R^d)$.  
	Let us prove that the potential $V_\alpha(\e)$ satisfies all the conditions of Theorem \ref{prlogmdensdoubrearr}. Let us take 
	a natural $n$, 
	\begin{equation}\label{jin1n}
	j\in\{1,2,\dots,n\}
	\end{equation}
	and a cube  
	\begin{equation}\label{inclcube}
	Q(\vec\xi,\,n)\subseteq\mathcal{D}_j(\vec l)
	\end{equation} 
	of the $3$-adic partition of $Q_1(\vec l)$. Let us notice that 
	\begin{equation*}
	P_1\big(Q(\vec\xi,\,n)\big)=[k\,3^{-n},\,(k+1)\,3^{-n}]
	\end{equation*}
	for some $k\in\Z$.
	Then taking into account definitions \eqref{dfcalD}-\eqref{dfSigmaaNalphnew}, \eqref{dfpotentValnew} and the $1$-periodicity of $\theta_\beta(t)$, we get for $|\vec l|_\infty>n$:
	\begin{eqnarray}\label{bigest}
	&&\mathrm{mes}_d\Big(\big\{\e\in Q(\vec\xi,n):\,V_\alpha(\e)>0\big\}\Big)=\\
	&&3^{-(d-1)n}\mathrm{mes}_1\Big(\big\{x\in[k\,3^{-n},\,(k+1)\,3^{-n}]:\,\nonumber\\
	&&\theta_\beta(3^p\,x)\vert_{\beta=3^{-\alpha j},\,p=|\vec l|_\infty+1}>0 \big\}\Big)=\nonumber\\
	&&3^{-(d-1)n}\mathrm{mes}_1\Big(\big\{x\in[0,\,3^{-n}]:\,\theta_\beta(3^p\,x)\vert_{\beta=3^{-\alpha j},\,p=|\vec l|_\infty+1}>0 \big\}\Big)=\nonumber\\
	&&\frac{3^{-(d-1)n}}{3^{|\vec l|_\infty+1}}\mathrm{mes}_1\Big(\big\{t\in[0,\,3^{|\vec l|_\infty+1-n}]:\,\theta_\beta(t)\vert_{\beta=3^{-\alpha j}}>0\big\}\Big)=\nonumber\\
	&& 3^{-dn}\mathrm{mes}_1\Big(\big\{t\in[0,\,1]:\,\theta_\beta(t)\vert_{\beta=3^{-\alpha j}}>0\big\}\Big)=\nonumber\\
	&&3^{-\alpha j}\mathrm{mes}_d\big(Q(\vec\xi,\,n)\big)\ge 3^{-\alpha n}\mathrm{mes}_d\big(Q(\vec\xi,\,n)\big).\nonumber
	\end{eqnarray}
	Therefore in view of definitions \eqref{dfSVyrdel}-\eqref{dfLVsry}, \eqref{dfSigmaaNalphnew} and \eqref{dfpotentValnew},
	\begin{equation*}
	V_\alpha^\star\big(3^{-\alpha n},\,\vec\xi,\,n\big)\ge\mathcal{N}(\vec l),
	\end{equation*}
	if conditions \eqref{jin1n} and \eqref{inclcube} are satisfied.  This estimate and conditions \eqref{alphain}, \eqref{cndNl1new}-b imply that condition 
	\eqref{cndlogdensdoubrearrprev} of Theorem \ref{prlogmdensdoubrearr} is satisfied for the potential $V_\alpha(\e)$ with $\gamma(r)=r^\alpha$ satisfying condition \eqref{cndtildgam}. Hence the spectrum of the operator 
	$H=-\Delta+V_\alpha(\e)\cdot$ is discrete.
\end{example}

\begin{example}\label{exLagr}
We shall construct an example of the potential $V(\e)\ge 0$ which satisfies conditions 
of Theorem \ref{thdiscLagr} (hence the spectrum of the operator $H=-\Delta+V(\e)\cdot$ is discrete) , but the condition \eqref{cndGMD} of the criterion from \cite{GMD},
formulated in Remark \ref{remGMD}, is not satisfied for it.	Consider the following partition of the interval $[0,1]$:  $[0,1]=\bigcup_{n=1}^\infty I_n$, where 
\begin{equation}\label{dfIn}
I_n=(2^{-n},\,2^{-n+1}]\quad(n=1,2,3,\dots)..
\end{equation}
  Let us take 
\begin{equation}\label{cndforalpha}
\alpha\in(0,\;2d/(d-2))
\end{equation}
and	consider the following function, defined on $(0,1]$:
\begin{eqnarray}\label{dfPsialNLagr}
\hskip10mm\Psi_{N,\,p,\alpha}(x):= N\theta_\beta(px)\vert_{\beta=2^{-\alpha n}}\quad\mathrm{for}\quad x\in I_n\;(n=1,2,\dots), 
\end{eqnarray}
where $\theta_\beta(x)$ is the $1$-periodic function, defined on $(0,1])$ by \eqref{dfthatbetnew}.
Let us construct the
desired potential in the following manner:
\begin{eqnarray}\label{dfpotentValnew1}
&&V_\alpha(\e):=\Psi_{N,\,p,\,\alpha}(P_1(\e-\vec
l))\vert_{N=\mathcal{N}(\vec l),\,p=|\vec l|_\infty+1}\quad
\mathrm{for}\quad\vec l\in \Z^d\\
&&\mathrm{and}\quad \e\in
Q_1(\vec l),\nonumber
\end{eqnarray}
where the operator $P_1$ is defined by \eqref{dfP1} and the function $\mathcal{N}(\vec l)$ has the properties \eqref{cndNl1new}. It is clear that $V_\alpha\in L_{\infty,\,loc}(\R^d)$. Let us show that the potential $V_\alpha(\e)$ satisfies condition  \eqref{cnddisclarg} of Theorem \ref{thdiscLagr}. Consider a cube $Q_r(\by)\;(r\in(0,1])$. 
Assume initially that 
\begin{equation*}
Q_r(\by)\subseteq Q_1(\vec l)\quad (\by=(y_1,y_2,\dots,y_d),\;\vec l=(l_1,l_2,\dots,l_d)). 
\end{equation*}
Denote  
\begin{equation}\label{dfFgamma}
F_\delta(x)=x-\sqrt{\delta}\cdot\sqrt{x-x^2}\quad(x\in(0,1],\;\delta>0)
\end{equation}
and 
\begin{equation}\label{dfKralpha}
\gamma(r)=K r^\alpha,
\end{equation}
 where $K>0$ will be chosen in the sequel. Consider the probability measure on the cube $Q_r(\by)$:  
\begin{equation*}
m_{d,r}(A):=\frac{\mathrm{mes}_d(A)}{\mathrm{mes}_d(Q_r(\by))}\quad(A\in\Sigma_L(Q_r(\by))).
\end{equation*} 
Recall that we denote by $\mathrm{E}_{\by,r}(W)$ and $\mathrm{Dev}_{\by,r}(W)$ the expectation and deviation of a real-valued  function $W\in L_2(Q_r(\by),\, m_{d,r})$ (definitions \eqref{dfedxpec}, \eqref{dfVar}).
In view of definitions  \eqref{dfPsialNLagr}, \eqref{dfpotentValnew1}, \eqref{dfFgamma} and Lemma \ref{lmpropsigmaNbetnew},  we have:
\begin{eqnarray}\label{expresFgam}
&&\hskip-15mm\mathrm{E}_{\by,r}(V_\alpha)-\sqrt{\gamma(r)}\cdot\mathrm{Dev}_{\by,r}(V_\alpha)=\nonumber\\
&&\hskip-15mm\mathcal{N}(\vec l)\cdot F_{\gamma(r)}\Big(\sum_{n=1}^\infty\mu_{d,r}\big(\{\e\in Q_r(\by)\cap D_n(\vec l):\;V_{\alpha}(\e)>0\}\big)\Big)=\nonumber\\
&&\hskip-15mm\mathcal{N}(\vec l)\cdot F_{\gamma(r)}\Big(\sum_{n=1}^\infty2^{-\alpha n}\frac{\mathrm{mes}_1\big([y_1-l_1,\,y_1-l_1+r]\cap I_n\big)+\theta(n,\vec l)}{r}\Big), 
\end{eqnarray}
where $[y_1,\,y_1+r]\subseteq [l_1,\,l_1+1]$ and the quantity $\theta(n,\vec l)$ satisfies the estimate:
\begin{equation*}\label{estthetnvecl}
|\theta(n,\vec l)|\le\frac{2\cdot 2^{-\alpha n}}{|\vec l|_\infty+1}.
\end{equation*}
Let $m_g$ be the measure on $[0,\,1]$, whose density $g(x)$ with respect to $\mathrm{mes}_1$ is
\begin{equation}\label{dfgx}
g(x)=2^{-\alpha n}\quad\mathrm{for}\quad x\in I_n.
\end{equation}
Then we have:
\begin{equation}\label{exprsumbymeas}
\sum_{n=1}^\infty 2^{-\alpha n}\mathrm{mes}_1\big([y_1-l_1,\,y_1-l_1+r]\cap I_n\big)=m_g\big([y_1-l_1,\,y_1-l_1+r]\big).
\end{equation}
On the other hand, by claim (i) of Lemma \ref{propofFdelta},
\begin{eqnarray}\label{estFgambelow}
&&\hskip-6mmF_{\gamma(r)}\Big(\frac{1}{r}m_g\big([y_1-l_1,\,y_1-l_1+r]\big)+
2^{-\alpha n}\frac{\theta(n,\vec l)}{r}\Big)\ge\\
&&\hskip-6mm F_{\gamma(r)}\Big(\frac{1}{r}m_g\big([y_1-l_1,\,y_1-l_1+r]\big)\Big)-
\big(1+\sqrt{2\gamma(r)}\big)\sqrt{\frac{2}{r(1-2^{-\alpha})(|\vec l|_\infty+1)}}\nonumber.
\end{eqnarray}
Notice that in view of \eqref{dfgx}, \eqref{dfIn}, the function $g(x)$ is monotone nondecreasing. Hence
\begin{eqnarray*}
	&&m_g\big([y_1-l_1,\,y_1-l_1+r]\big)=\int_{y_1-l_1}^{y_1-l_1+r}g(x)\,\mathrm{d}x\ge\nonumber\\
	&&\int_{0}^{r}g(x)\,\mathrm{d}x=m_g\big([0,r]\big)=
\sum_{n=1}^\infty 2^{-\alpha n}\mathrm{mes}_1\big([0,r]\cap I_n\big).
\end{eqnarray*}
Let $n_0$ be the natural number such that that $2^{-n_0}\le r<2\cdot 2^{-n_0}$.
Then
\begin{eqnarray*}
&&\sum_{n=1}^\infty
2^{-\alpha n}\frac{\mathrm{mes}_1\big([0,r]\cap I_n\big)}{r}\ge
\sum_{n=n_0}^\infty2^{-\alpha n}\frac{2^{-n+1}}{r}=\frac{2}{r}\cdot\frac{2^{-(\alpha+1)n_0}}{1-2^{-\alpha-1}}\ge\nonumber\\
&&\frac{2}{r}\cdot\frac{(r/2)^{-(\alpha+1)}}{1-2^{-\alpha-1}}=\frac{2}{2^{\alpha+1}-1}r^\alpha.
\end{eqnarray*}
The last estimates, definition \eqref{dfKralpha} and claims (ii), (iii) of Lemma \ref{propofFdelta} imply that if 
$0<K<\frac{2}{2^{\alpha+1}-1}$, then 
\begin{equation*}
F_{\gamma(r)}\Big(\frac{2}{2^{\alpha+1}-1}r^\alpha\Big)>0
\end{equation*}
and
\begin{eqnarray*}
F_{\gamma(r)}\Big(\frac{1}{r}m_g\big([y_1-l_1,\,y_1-l_1+r]\big)\Big)\ge F_{\gamma(r)}\Big(\frac{2}{2^{\alpha+1}-1}r^\alpha\Big). 
\end{eqnarray*}
Hence in view of \eqref{expresFgam}, \eqref{exprsumbymeas} and \eqref{estFgambelow},
\begin{eqnarray*}
&&\mathrm{E}_{\by,r}(V_\alpha)-\sqrt{\gamma(r)}\cdot\mathrm{Dev}_{\by,r}(V_\alpha)\ge\\
&&\mathcal{N}(\vec l)\cdot\Big(F_{\gamma(r)}\Big(\frac{2}{2^{\alpha+1}-1}r^\alpha\Big)-\big(1+\sqrt{2\gamma(r)}\big)\sqrt{\frac{2}{r(1-2^{-\alpha})(|\vec l|_\infty+1)}}\Big).\nonumber
\end{eqnarray*}
Now consider the case where the inclusion $Q_r(\by)\subseteq Q_1(\vec l)$ is not valid. It is clear that  the cube $Q_r(\by)$ intersects 
not more than $2^d$ adjacent cubes $Q_1(\vec l_k)\,(\vec l_k\in\Z^d,\,k=1,2,\dots,\nu)$  and among them
there is a cube $Q_1(\vec l_k)$ such that for some $\tilde\by\in Q_1(\vec l_k)$: $Q_{ r/2}(\tilde\by)\subseteq Q_1(\vec l_k)\cap Q_{r}(\by)$. Thus, for any $\by\in\R^d$ we can choose $\tilde{\vec l}\in Z^d$ such that
$Q_{r/2}(\tilde\by)\subseteq Q_r(\by)\cap Q_1(\tilde{\vec l})$.
Using again  definitions  \eqref{dfPsialNLagr}, \eqref{dfpotentValnew1}, \eqref{dfFgamma} and Lemma \ref{lmpropsigmaNbetnew},  we get:
\begin{eqnarray*}
&&\mathrm{E}_{\by,r}(V_\alpha)-\sqrt{\gamma(r)}\cdot\mathrm{Dev}_{\by,r}(V_\alpha)=\\
&&F_{\gamma(r)}\Big(\sum_{k=1}^\nu\mathcal{N}(\vec l_k)\sum_{n=1}^\infty\mu_{d,r}\big(\{\e\in Q_r(\by)\cap D_n(\vec l_k):\;V_{\alpha}(\e)>0\}\big)\Big)
\end{eqnarray*}
and
\begin{eqnarray*}
&&\sum_{k=1}^\nu\mathcal{N}(\vec l_k)\sum_{n=1}^\infty\mu_{d,r}\big(\{\e\in Q_r(\by)\cap D_n(\vec l_k):\;V_{\alpha}(\e)>0\}\big)\ge\\
&&\mathcal{N}(\tilde{\vec l})\cdot\frac{1}{r}m_g\big([\tilde y_1-\tilde l_1,\,\tilde y_1-\tilde l_1+r/2]\big)-
\max_{k\in\{1,2,\dots,\nu\}}\frac{2}{r(1-2^{-\alpha})(|\vec l_k|_\infty+1)},
\end{eqnarray*}
where $\tilde y_1=P_1(\tilde\by)$ and $\tilde l_1=P_1(\tilde{\vec l})$. Using the previous arguments, we can choose $K>0$ in \eqref{dfKralpha} small enough, such that 
\begin{equation*}
\mathrm{E}_{\by,r}(V_\alpha)-\sqrt{\gamma(r)}\cdot\mathrm{Dev}_{\by,r}(V_\alpha)\ge \mathcal{N}(\tilde{\vec l})\cdot\Big(\Phi(r)-\max_{k\in\{1,2,\dots,\nu\}}\frac{\phi(r)}{\sqrt{|\vec l_k|_\infty+1}}\Big),
\end{equation*}
where the quantities $\Phi(r)$ and $\phi(r)$ depend only on $r$ and they are positive. This estimate and condition \eqref{cndNl1new} imply that the potential $V_\alpha(\e)$ satisfies condition \eqref{cnddisclarg}. Furthermore, in view of \eqref{dfKralpha} and \eqref{alphain}, the function $\gamma(r)$ satisfies conditions \eqref{cndtildgam}. Thus, all the conditions of Theorem \ref{thdiscLagr}
are satisfied for the potential $V_\alpha(\e)$, hence the spectrum of operator $H=-\Delta+V_\alpha(\e)\cdot$ is discrete.

Let us show that the condition \eqref{cndGMD} of the criterion from \cite{GMD} is not satisfied for the potential $V_\alpha(\e)$. To this end 
for any natural $n$ consider $\vec l_n\in\Z^d$ such that $|\vec l_n|_\infty=n$ and take a cube
\begin{equation*}
 Q(\vec\xi_n,\,n+1)=Q_{2^{-n-1}}(\vec\xi_n)\subset I_n\times[0,1]^{d-1}+\{\vec l_n\}.
\end{equation*}
 In view 
of \eqref{dfthatbetnew}, \eqref{dfSigmaaNalphnew}, \eqref{dfpotentValnew} and \eqref{cndNl1new}-a, $\int_{Q(\vec\xi_n,\,n)}V_\alpha(\e)\,\mathrm{d}\e>0$.
Then taking $\delta>0$, we have:
\begin{eqnarray*}
	&&\hskip-5mm\big\{\e\in Q(\vec\xi_n,\,n+1):\,V_\alpha(\e)\ge\frac{\delta}{\mathrm{mes}_d(Q(\vec\xi_n,\,n+1))}\int_{Q(\vec\xi_n,\,n+1)}V_\alpha(\e)\,\mathrm{d}\e\big\}\subseteq\\
	&&\hskip-5mm\big\{\e\in Q(\vec\xi_n,\,n+1):\,V_\alpha(\e)>0\big\}.
\end{eqnarray*}
On the other hand, in view of definitions  \eqref{dfPsialNLagr}, \eqref{dfpotentValnew1} and Lemma \ref{lmpropsigmaNbetnew},  we get:
\begin{eqnarray*}
&&\mathrm{mes}_d\Big(\big\{\e\in Q(\vec\xi_n,\,n+1):\,V_\alpha(\e)>0\big\}\Big)\le\\ 
&&2^{-\alpha n}\Big(1+\frac{2}{n+1}\Big)\mathrm{mes}_d\big(Q(\vec\xi_n,\,n+1)\big).
\end{eqnarray*}
These circumstances mean that the sequence 
\begin{equation*}
\frac{\mathrm{mes}_d\Big(\big\{\e\in Q(\vec\xi_n,\,n+1):\,V_\alpha(\e)\ge\frac{\delta}{\mathrm{mes}_d(Q(\vec\xi_n,\,n+1))}\int_{Q(\vec\xi_n,\,n+1)}V_\alpha(\e)\,\mathrm{d}\e\big\}\Big)}
{\mathrm{mes}_d\big(Q(\vec\xi_n,\,n+1)\big)}
\end{equation*}
tends to zero as $n\rightarrow\infty$. This means that condition \eqref{cndGMD} is not satisfied for the potential $V_\alpha(\e)$.

\end{example}

In the construction of Example \ref{exLagr} we have used the following claims:

\begin{lemma}\label{lmpropsigmaNbetnew}
	For an interval $[a,b]$ and $S>0$ consider the quantity
	\begin{equation*}
	M(S,\beta,a,b)=\mathrm{mes}_1\{x\in[a,b]:\,\theta_\beta(Sx)>0\},
	\end{equation*}
	where $\theta_\beta(x)$ is defined by
	\eqref{dfthatbetnew}. It satisfies the inequalities
	\begin{equation}\label{estaboveMNnew}
	M(S,\beta,a,b)\le \beta(b-a)+2\beta/S,
	\end{equation}
	\begin{equation}\label{estbelowMNnew}
	M(S,\beta,a,b)\ge \beta(b-a)-2\beta/S;
	\end{equation}
\end{lemma}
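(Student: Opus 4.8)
The plan is to rewrite $M(S,\beta,a,b)$ as the Lebesgue measure of the intersection of $[a,b]$ with an explicit $1/S$-periodic set, and then to control the discrepancy between this measure and its ``expected value'' $\beta(b-a)$ by a boundary term of size at most $\beta/S$.

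First I would unwind the definitions. Since $\theta_\beta$ is $1$-periodic and equals $1$ on $(0,\beta]$ and $0$ on $(\beta,1]$, one has $\theta_\beta(y)>0$ exactly when $y\in\bigcup_{n\in\Z}(n,n+\beta]$. Consequently
\begin{equation*}
\{x\in\R:\ \theta_\beta(Sx)>0\}=A:=\bigcup_{n\in\Z}\Big(\tfrac{n}{S},\,\tfrac{n+\beta}{S}\Big],
\end{equation*}
a disjoint union of intervals of length $\beta/S$ spaced with period $1/S$; thus $A+\tfrac1S=A$, and each period window of length $1/S$ meets $A$ in a set of measure $\beta/S$, so $A$ has density $\beta$. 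With this notation $M(S,\beta,a,b)=\mathrm{mes}_1([a,b]\cap A)$.

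Next I would introduce the counting function $G(t)=\mathrm{mes}_1([0,t]\cap A)$ for $t\ge0$ (and $G(t)=-\mathrm{mes}_1([t,0]\cap A)$ for $t<0$), so that $M(S,\beta,a,b)=G(b)-G(a)$. Using $A+\tfrac1S=A$ one checks the quasi-periodicity $G(t+\tfrac1S)=G(t)+\tfrac{\beta}{S}$, which means that the remainder $h(t):=G(t)-\beta t$ is $\tfrac1S$-periodic. It therefore suffices to bound $h$ over one period, say $[0,\tfrac1S]$: on $(0,\tfrac{\beta}{S}]$ the set $A$ is ``on'', so $G(t)=t$ and $h(t)=(1-\beta)t$; on $(\tfrac{\beta}{S},\tfrac1S]$ the set $A$ is ``off'', so $G(t)=\tfrac{\beta}{S}$ and $h(t)=\beta(\tfrac1S-t)$. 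Hence $0\le h(t)\le \tfrac{\beta(1-\beta)}{S}\le\tfrac{\beta}{S}$ for all $t$.

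Finally I would combine these: $M(S,\beta,a,b)-\beta(b-a)=h(b)-h(a)$, and since both $h(a),h(b)\in[0,\tfrac{\beta}{S}]$ we obtain $|h(b)-h(a)|\le \tfrac{\beta}{S}\le \tfrac{2\beta}{S}$, which yields simultaneously \eqref{estaboveMNnew} and \eqref{estbelowMNnew} (in fact with room to spare). The only genuine point requiring care is the contribution of the two partial periods straddling the endpoints $a$ and $b$; the quasi-periodicity of $G$ isolates exactly this boundary effect into the bounded function $h$, so no delicate case analysis of how $[a,b]$ sits relative to the grid $\tfrac1S\Z$ is needed.
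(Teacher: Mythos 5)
Your proof is correct. You identify the level set $\{x:\theta_\beta(Sx)>0\}$ explicitly as the $\tfrac1S$-periodic union $A=\bigcup_{n\in\Z}(\tfrac nS,\tfrac{n+\beta}S]$, pass to the distribution function $G(t)=\mathrm{mes}_1([0,t]\cap A)$, and show that the discrepancy $h(t)=G(t)-\beta t$ is $\tfrac1S$-periodic with $0\le h\le\beta(1-\beta)/S$, whence $|M(S,\beta,a,b)-\beta(b-a)|=|h(b)-h(a)|\le\beta/S$. The paper argues differently: it never introduces $A$ or $G$, but instead uses monotonicity of $M$ in the interval together with $1$-periodicity to sandwich $[a,b]$ between the grid-aligned intervals $[\,[Sa]/S,([Sb]+1)/S\,]$ and $[\,([Sa]+1)/S,[Sb]/S\,]$, on which the measure is exactly an integer number of periods times $\beta/S$; the bounds \eqref{estaboveMNnew}--\eqref{estbelowMNnew} then follow from $[Sb]+1-[Sa]\le S(b-a)+2$ and $[Sb]-1-[Sa]\ge S(b-a)-2$. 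The two arguments exploit the same periodicity, but your discrepancy-function formulation handles both inequalities in one stroke, avoids the separate outer/inner rounding, and in fact yields the sharper constant $\beta(1-\beta)/S\le\beta/S$ in place of $2\beta/S$ — more than enough for the applications in Examples \ref{exLagr} and \ref{ex5}, where only the order $O(\beta/S)$ of the error matters.
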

\begin{proof}
	Taking into account the $1$-periodicity of $\theta_\beta(x)$, we obtain:
	\begin{eqnarray*}
		&&M(S,\beta,a,b)\le M(S,\beta,[S a]/S,([S b]+1)/S)=\\
		&&S^{-1}M(1,\beta,[S a],[S b]+1)=\frac{\beta}{S}\big([S]+1-[S a]\big)\le\\
		&&\beta(b-a)+2\beta/S,
	\end{eqnarray*}
	\begin{eqnarray*}
		&&M(S,\beta,a,b)\ge M(S,\beta,([S a]+1)/S,[S b]/S)=\\
		&&S^{-1}M(1,\beta,[S a]+1,[S b])
		=\frac{\beta}{S}\big([S b]-1-[S a]\big)\ge\\
		&&\beta(b-a)-2\beta/S.
	\end{eqnarray*}
\end{proof}

\begin{lemma}\label{propofFdelta}
	The function $F_\delta(x)$, defined by \eqref{dfFgamma} has the properties:
	
\noindent (i) for any $h\in(0,\,1)$ and $x\in(0, 1-h)$
	\begin{equation*}
	|F_\delta(x+h)-F_\delta(x)|\le \big(1+\sqrt{2\delta}\big)\sqrt{h};
	\end{equation*}
	\vskip2mm
\noindent (ii) it is positive in  $(\frac{\delta}{1+\delta},\,1]$;
	\vskip2mm
\noindent (iii) it is increasing in  $(\frac{\delta}{2(1+\delta+\sqrt{1+\delta})},\,1]$.
	\begin{proof}
		(i) We have:
	\begin{equation}\label{derivFdelta}
	F_\delta^\prime(x)=1-\sqrt{\delta}\frac{1-2x}{2\sqrt{x-x^2}}.
	\end{equation}
	Then
\begin{equation}\label{estdiffer}
|F_\delta(x+h)-F_\delta(x)|\le\int_x^{x+h}|F_\delta^\prime(s)|\,\mathrm{d}s\le h+\frac{\sqrt{\delta}}{2}\int_x^{x+h}\frac{\mathrm{d}s}{\sqrt{s}\cdot\sqrt{1-s}}.
\end{equation}
Consider the case where $x\in[0,\,1/2]$. Then 
\begin{eqnarray*}\label{estint1}
&&\int_x^{x+h}\frac{\mathrm{d}s}{\sqrt{s}\cdot\sqrt{1-s}}\le\sqrt{2}\int_x^{x+h}\frac{\mathrm{d}s}{\sqrt{s}}=2\sqrt{2}\big(\sqrt{x+h}-\sqrt{x}\big)=\nonumber\\
&&2\sqrt{2}\frac{h}{\sqrt{x+h}+\sqrt{x}}\le 2\sqrt{2h}.
\end{eqnarray*}
Now consider the case where $x\in(1/2,\,1-h]$. We have:
\begin{eqnarray*}\label{estint2}
&&\hskip-5mm\int_x^{x+h}\frac{\mathrm{d}s}{\sqrt{s}\cdot\sqrt{1-s}}\le\sqrt{2}\int_x^{x+h}\frac{\mathrm{d}s}{\sqrt{1-s}}=2\sqrt{2}\big(\sqrt{1-x}-\sqrt{1-h-x}\big)=\nonumber\\
&&\hskip-5mm2\sqrt{2}\frac{h}{\sqrt{1-x}+\sqrt{1-h-x}}\le 2\sqrt{2h}.
\end{eqnarray*}
These estimates imply claim (i).

(ii) The representation $F_\delta(x)=\frac{x^2-\delta(x-x^2)}{x+\sqrt{\delta}\cdot\sqrt{x-x^2}}$
implies that $F_\delta(x)>0$ if and only if $x\in(\frac{\delta}{1+\delta},\,1]$. Claim (ii) is proven.

(iii) We see from \eqref{derivFdelta} that $F_\delta^\prime(x)>0$, if $x\in[1/2,\,1]$. Consider the case where $x\in[0,\,1/2)$. The representation 
\begin{equation*}
F_\delta^\prime(x)=\frac{2\sqrt{x-x^2}-\sqrt{\delta}(1-2x)}{2\sqrt{x-x^2}}=\frac{4(x-x^2)-\delta(1-2x)^2}{2\sqrt{x-x^2}\big(2\sqrt{x-x^2}+\sqrt{\delta}(1-2x)\big)}
\end{equation*}
implies that $F_\delta^\prime(x)>0$, if and only if $x^2-x+\frac{\delta}{4(1+\delta)}<0$, i.e.,
\begin{equation*}
x\in\Big(\frac{\delta}{2(1+\delta+\sqrt{1+\delta})},\,1/2\Big).
\end{equation*}
 Claim (iii) is proven.
\end{proof}
\end{lemma}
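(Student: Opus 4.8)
The plan is to read off the three properties from the closed form $F_\delta(x)=x-\sqrt{\delta}\,\sqrt{x-x^2}$ given in \eqref{dfFgamma} and from its derivative
\begin{equation*}
F_\delta'(x)=1-\sqrt{\delta}\,\frac{1-2x}{2\sqrt{x-x^2}}\qquad(0<x<1),
\end{equation*}
treating (ii) and (iii) by purely algebraic rationalizations and reserving the bulk of the work for the H\"older bound (i).

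For (ii) I would multiply and divide by the conjugate to obtain the representation $F_\delta(x)=\frac{x^2-\delta(x-x^2)}{x+\sqrt{\delta}\,\sqrt{x-x^2}}$, whose denominator is positive on $(0,1]$; hence $F_\delta(x)$ has the sign of the numerator $x\big((1+\delta)x-\delta\big)$, which is positive precisely when $x>\frac{\delta}{1+\delta}$. For (iii), I would first note that $F_\delta'(x)>0$ is immediate when $x\ge 1/2$, since then $1-2x\le 0$; for $x<1/2$ I would clear the denominator in the derivative, writing $F_\delta'(x)=\frac{4(x-x^2)-\delta(1-2x)^2}{2\sqrt{x-x^2}\big(2\sqrt{x-x^2}+\sqrt{\delta}(1-2x)\big)}$, so that positivity of $F_\delta'$ reduces to the quadratic inequality $x^2-x+\frac{\delta}{4(1+\delta)}<0$. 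Solving this and rationalizing the smaller root produces exactly the threshold $\frac{\delta}{2(1+\delta+\sqrt{1+\delta})}$, which together with the case $x\ge 1/2$ gives monotonicity on the claimed interval.

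The main obstacle is (i), because the estimate must hold uniformly up to the endpoints $0$ and $1$, where $F_\delta'$ blows up like $(x-x^2)^{-1/2}$ and a naive Lipschitz bound fails. The plan is to integrate the derivative and discard the bounded factor, using $|1-2s|\le 1$:
\begin{equation*}
|F_\delta(x+h)-F_\delta(x)|\le\int_x^{x+h}|F_\delta'(s)|\,\mathrm{d}s\le h+\frac{\sqrt{\delta}}{2}\int_x^{x+h}\frac{\mathrm{d}s}{\sqrt{s}\,\sqrt{1-s}}.
\end{equation*}
I would then split the interval at $s=1/2$. On $[0,1/2]$ the factor $\sqrt{1-s}$ is bounded below by $1/\sqrt{2}$, reducing the integral to $\sqrt{2}\int_x^{x+h}s^{-1/2}\,\mathrm{d}s=2\sqrt{2}\big(\sqrt{x+h}-\sqrt{x}\big)\le 2\sqrt{2h}$, where the last step uses $\sqrt{x+h}-\sqrt{x}=h/(\sqrt{x+h}+\sqrt{x})\le\sqrt{h}$; the reflection $s\mapsto 1-s$ treats $(1/2,1-h]$ identically. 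Combining the two regimes gives $|F_\delta(x+h)-F_\delta(x)|\le h+\sqrt{2\delta}\,\sqrt{h}$, and since $h\le\sqrt{h}$ for $h\in(0,1)$ this is at most $(1+\sqrt{2\delta})\sqrt{h}$, completing the argument.
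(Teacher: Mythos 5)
Your treatment of parts (ii) and (iii) coincides with the paper's proof: the same conjugate rationalization of $F_\delta$ for positivity, and the same reduction of $F_\delta'>0$ to the quadratic $x^2-x+\frac{\delta}{4(1+\delta)}<0$ with the same smaller root. Part (i) also follows the paper's route exactly: integrate $|F_\delta'|$, use $|1-2s|\le 1$, and try to bound $\int_x^{x+h}\big(s-s^2\big)^{-1/2}\,\mathrm{d}s$ by $2\sqrt{2h}$ through a dichotomy at $1/2$.

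In part (i), however, you inherit (and do not repair) the one soft spot of the paper's argument: the case $x\le 1/2<x+h$. There the bound $1/\sqrt{1-s}\le\sqrt2$ is applied on all of $[x,x+h]$ although it is only valid for $s\le1/2$. You announce that you will ``split the interval at $s=1/2$,'' but your displayed computation still integrates $\sqrt2\,s^{-1/2}$ over the whole of $[x,x+h]$; and if you genuinely split and add the two one-sided estimates you get $2\sqrt2\big(\sqrt{h_1}+\sqrt{h_2}\big)\le 4\sqrt h$ (with $h_1+h_2=h$), not $2\sqrt{2h}$, so the final constant degrades to $1+2\sqrt\delta$ instead of the claimed $1+\sqrt{2\delta}$. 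The intermediate inequality $\int_x^{x+h}(s-s^2)^{-1/2}\,\mathrm{d}s\le 2\sqrt{2h}$ is actually false near the corner $x\to0$, $h\to1$, where the integral tends to $\pi>2\sqrt2$. The lemma itself survives: writing $F_\delta(x+h)-F_\delta(x)=h-\sqrt\delta\,(\sqrt a-\sqrt b)$ with $a=(x+h)-(x+h)^2$ and $b=x-x^2$, and using $|\sqrt a-\sqrt b|\le\sqrt{|a-b|}=\sqrt{h\,|1-2x-h|}\le\sqrt h$, gives $|F_\delta(x+h)-F_\delta(x)|\le h+\sqrt\delta\,\sqrt h\le(1+\sqrt\delta)\sqrt h$, which is sharper than the stated bound and bypasses the integral entirely. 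Since Example \ref{exLagr} only needs a modulus of continuity of the form $C(\delta)\sqrt h$, nothing downstream is affected, but the case analysis as written (in both your version and the paper's) does not close.
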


\begin{example}\label{exlagreasier}
	The following example shows that in some cases condition \eqref{cnddisclarg} of Theorem \ref{thdiscLagr} is easier verifiable than condition \eqref{cndlebesrear} of Theorem \ref{thcondlebesrear}. Consider the potential
\begin{equation*}\label{dfpoteasver}
V(\e)=\mathcal{N}(\vec l)W_{\vec l}(\e)\quad\mathrm{for}\quad \e=(x_1,x_2,\dots.x_d)\in Q_1(\vec l)\quad(\vec l\in\Z^d),
\end{equation*}
where 
\begin{equation*}\label{dfWvecl}
W_{\vec l}(\e)=\prod_{j=1}^d\big(1-\cos(m(\vec l)x_j)\big),
\end{equation*}
$\mathcal{N}(\vec l)>0$, $m(\vec l)> 0$  and 
\begin{equation}\label{cndforNm}
\mathcal{N}(\vec l)\rightarrow\infty,\quad m(\vec l)\rightarrow\infty\quad\mathrm{for}\quad |\vec l|\rightarrow\infty.
\end{equation}
Consider the case where $Q_r(\by)\subseteq Q_1(\vec l)\,(
r\in(0,1/2),\by=(y_1,y_2,\dots,y_d),\,\vec l\in\Z^d)$. We have:
\begin{equation}\label{transNl}
\mathrm{E}_{\by,r}(V)-\sqrt{\gamma}\cdot\mathrm{Dev}_{\by,r}(V)=\mathcal{N}(\vec l)\big(\mathrm{E}_{\by,r}(W_{\vec l})-\sqrt{\gamma}\cdot\mathrm{Dev}_{\by,r}(W_{\vec l})\big)
\end{equation}
$(\gamma>0)$,
\begin{eqnarray*}
&&\mathrm{E}_{\by,r}(W_{\vec l})=\frac{1}{r^d}\int_{Q_r(\by)}W_{\vec l}(\e)\,\mathrm{d}\e=\prod_{j=1}^d\frac{1}{r}\int_{y_j}^{y_j+r}\big(1-\cos(m(\vec l)x_j)\big)\,\mathrm{d}x_j=\nonumber\\
&&\prod_{j=1}^d\Big(1+O\big(\frac{1}{m(\vec l)}\big)\Big)
\end{eqnarray*}
and 
\begin{eqnarray*}
&&\hskip-5mm\mathrm{E}_{\by,r}(W_{\vec l}^2)=\frac{1}{r^d}\int_{Q_r(\by)}W_{\vec l}^2(\e)\,\mathrm{d}\e=\prod_{j=1}^d\frac{1}{r}\int_{y_j}^{y_j+r}\big(1-\cos(m(\vec l)x_j)\big)^2\,\mathrm{d}x_j=\nonumber\\
&&\hskip-5mm\prod_{j=1}^d\Big(3/2+O\big(\frac{1}{m(\vec l)}\big)\Big)
\end{eqnarray*} 
for $|\vec l|\rightarrow\infty$. Hence 
\begin{equation*}
\big(\mathrm{Dev}_{\by,r}(W_{\vec l})\big)^2=\mathrm{E}_{\by,r}(W_{\vec l}^2)-\big(\mathrm{E}_{\by,r}(W_{\vec l})\big)^2=\big(3/2\big)^d-1+O\big(\frac{1}{m(\vec l)}\big).
\end{equation*}
and 
\begin{eqnarray}\label{finest}
\mathrm{E}_{\by,r}(W_{\vec l})-\sqrt{\gamma}\cdot\mathrm{Dev}_{\by,r}(W_{\vec l})=1-\sqrt{\gamma}\cdot\sqrt{\big(3/2\big)^d-1}+O\big(\frac{1}{m(\vec l)}\big).
\end{eqnarray}
In the case, where the inclusion $Q_r(\by)\subseteq Q_1(\vec l)$ is not valid for any $\vec l\in\Z^d$, the cube $Q_r(\by)$ intersects not more than $2^d$ adjacent cubes $Q_1(\vec l_k)\,(\vec l_k\in\Z^d)$. Making analogous asymptotic estimates for integrals $\int_{Q_r(\by\cap Q_1(\vec l_k)}W_{\vec l_k}(\e)\,\mathrm{d}\e$ and $\int_{Q_r(\by\cap Q_1(\vec l_k)}W_{\vec l_k}^2(\e)\,\mathrm{d}\e$ and summing each of them by $k$, we obtain the asymptotic estimate with the same leading term as in \eqref{finest}.
From \eqref{cndforNm}-\eqref{finest} we obtain that if $\gamma\in\Big(0,\,\frac{1}{\big(3/2\big)^d-1}\Big)$, the potential $V(\e)$ satisfies condition \eqref{cnddisclarg} of Theorem \ref{thdiscLagr} with $Q_r(\by)$ instead of $\mathcal{G}_r(\by)$ and $\gamma(r)\equiv\gamma$. Hence the spectrum of operator $H=-\Delta+V(\e)\cdot$ is discrete. Notice that in view of Remark \ref{remLagrimplrear}, conditions of Theorem \ref{thdiscLagr} imply conditions of Theorem \ref{thcondlebesrear}. But the immediate application of Theorem \ref{thcondlebesrear} to this example needs technically complicated geometrical considerations for calculation of the non-increasing rearrangement of the potential $V(\e)$. 
\end{example}

\begin{example}\label{exrearbutnotLagr}
The potential $V(\e)$, considered below,  satisfies the conditions of Theorem \ref{thcondlebesrear},  but condition \eqref{cnddisclarg} of Theorem \ref{thdiscLagr} is not satisfied for it. We put 
\begin{equation}\label{Vexpxquad}
V(\e)= e^{|\e|^2}.
\end{equation}
Since $\lim_{|\e|\rightarrow\infty}V(\e)=+\infty$, it is clear that $V(\e)$ satisfies the conditions of Theorem \ref{thcondlebesrear}. On the other hand, the representation 
\begin{eqnarray*}
&&\mathrm{E}_{\by,r}(V)-\sqrt{\gamma(r)}\cdot\mathrm{Dev}_{\by,r}(V)=\\
&&-\sqrt{\mathrm{E}_{\by,r}(V^2)}\Big(\sqrt{\gamma(r)}\sqrt{1-\frac{(\mathrm{E}_{\by,r}(V))^2}{\mathrm{E}_{\by,r}(V^2)}}-\frac{\mathrm{E}_{\by,r}(V)}{\sqrt{\mathrm{E}_{\by,r}(V^2)}}\Big)
\end{eqnarray*}
implies that condition \eqref{cnddisclarg} is not satisfied, if 
\begin{equation}\label{limfraczero}
\lim_{|\by|\rightarrow\infty}\frac{(\mathrm{E}_{\by,r}(V))^2}{\mathrm{E}_{\by,r}(V^2)}=0.
\end{equation}
Let us take $\mathcal{G}_r(\by)=Q_{r_1}(\by-r\vec a)$ 
($r_1=2r/\sqrt{d}$, $\vec a=(d^{-1/2}, d^{-1/2},\dots,d^{-1/2})$). Using twice L'Hospital's rule, we have:
\begin{eqnarray*}
&&\lim_{|y|\rightarrow\infty}\frac{\big(\int_y^{y+r_1}e^{x^2}\,\mathrm{d}x\big)^2}{\int_y^{y+r_1}e^{2x^2}\,\mathrm{d}x}=2\lim_{|y|\rightarrow\infty}\frac{\big(\int_y^{y+r_1}e^{x^2}\,\mathrm{d}x\big)\big(e^{(y+r_1)^2}-e^{y^2}\big)}{e^{2(y+r_1)^2}-e^{2y^2}}=\\
&&\lim_{|y|\rightarrow\infty}\frac{e^{(y+r_1)^2}-e^{y^2}}{(y+r_1)e^{(y+r_1)^2}+ye^{y^2}}=0.
\end{eqnarray*}
This result and definition \eqref{Vexpxquad} imply equality \eqref{limfraczero}. Thus, the considered potential does not satisfy condition \eqref{cnddisclarg}. 
\end{example}

\begin{example}\label{ex5}
	We shall construct an example of the potential $V(\e)$, for which
	the conditions of claim (ii) of Theorem \ref{thperturb} are satisfied (hence the
	spectrum of the operator $H=-\Delta+V(\e)\cdot$ is discrete), but
	condition \eqref{cndlebesrear} of Theorem \ref{thcondlebesrear} is not satisfied for it.
	Suppose that the $(\log_3,\,1/9)$-dense systems $\mathcal{D}_n(\vec l)$ in $Q_1(\vec l)$ $(\vec l\in\Z^d)$ and the potential $V_\alpha(\e)$ 
	are the same as in Example \ref{ex4}. Assume that
	\begin{equation}\label{addcondforalph}
	\log_3 2<\alpha<2d/(d-2).
	\end{equation}
	Consider a monotone non-decreasing function $\psi:\,(0,\,1)\rightarrow(0,\,1)$ satisfying the condition
	\begin{equation}\label{cndforpsi}
	\lim_{r\downarrow 0}r^{-2d/(d-2)}\psi(r)<\infty
	\end{equation}
	and the function
	\begin{eqnarray}\label{dfSigmaaNalphnewest}
	&&\hskip-5mm\tilde\Sigma_{N,\,p,\,\psi}(x):=\nonumber\\
	&&\hskip-5mm\left\{\begin{array}{ll}
	0&\quad\mathrm{for}\quad x\in[0,\,1]\setminus\bigcup_{n=1}^\infty D_n,\\
	N\frac{3^{-\alpha n}}{\beta}\theta_\beta(3^px)\vert_{\beta=\psi(3^{-(n+1)})}&\quad\mathrm{for}\quad x\in
	D_n\;(n=1,2,\dots)
	\end{array}\right.
	\end{eqnarray}
	$(N>0,\,p\in\N)$, where $\theta_\beta(x)$ is defined by \eqref{dfthatbetnew}.
Consider the potential 
	\begin{eqnarray}\label{dfpotentValnewest}
	&&V_\psi(\e):=\tilde\Sigma_{N,\,p,\,\psi}(P_1(\e-\vec
	l))\vert_{N=\mathcal{N}(\vec l),\,p=|\vec l|_\infty+1}\quad
	\mathrm{for}\quad\vec l\in \Z^d\nonumber\\
	&&\mathrm{and}\quad \e\in
	Q_1(\vec l),
	\end{eqnarray}
	where the function $\mathcal{N}(\vec l)$ satisfies conditions \eqref{cndNl1new}. 
	Recall that the operator
	$P_1$  is defined by \eqref{dfP1}.
	Let $I_{n,k}\,(1\le k\le 2^{n-1})$ be the intervals forming the set $D_n$, defined in Example \ref{ex4}. 
	Taking into account the left inequality of \eqref{addcondforalph} and using Lemma \ref{lmpropsigmaNbetnew}, we have:
	\begin{eqnarray*}
		&&\hskip-5mm\int_{Q_1(\vec l)}V_\psi(\e)\,d\e=
		\mathcal{N}(\vec l)\sum_{n=1}^\infty\sum_{k=1}^{2^{n-1}}\int_{I_{n,k}}\frac{3^{-\alpha n}}{\beta}\theta_\beta(3^{|\vec l|_\infty+1}x)\vert_{\beta=\psi(3^{-(n+1)})}\,dx\le\\
		&&\hskip-5mm\mathcal{N}(\vec l)\sum_{n=1}^\infty 2^{n-1}\Big(3^{-(\alpha+1)n}+\frac{2}{3^{|\vec l|_\infty+1}}3^{-\alpha n}\Big)<\infty.
	\end{eqnarray*}
	Hence $V_\psi\in L_{1,\,loc}(\R^d)$. Let us prove that the potential $V_\psi(\e)$ satisfies the conditions  of claim (ii) of Theorem \ref{thperturb}. 
	Denote $W_{\psi,\alpha}(\e)=V_\psi(\e)-V_\alpha(\e)$. Then, in view of \eqref{dfpotentValnew},
	\begin{eqnarray}\label{dfWpsialph}
	&&W_{\psi,\alpha}(\e):=\hat\Sigma_{N,\,p,\,\psi,\,\alpha}(P_1(\e-\vec
	l))\vert_{N=\mathcal{N}(\vec l),\,p=|\vec l|_\infty+1}\quad
	\mathrm{for}\quad\vec l\in \Z^d\nonumber\\
	&&\mathrm{and}\quad \e\in
	Q_1(\vec l),
	\end{eqnarray}
	where
	\begin{eqnarray}\label{dfhatSigmaapsialph}
	&&\hat\Sigma_{N,\,p,\,\psi,\,\alpha}(x):=\nonumber\\
	&&\left\{\begin{array}{ll}
	0&\quad\mathrm{for}\quad x\in[0,\,1]\setminus\bigcup_{n=1}^\infty D_n,\\
	N\tilde\theta_{\psi,\alpha,n}(3^px)&\quad\mathrm{for}\quad x\in
	D_n\;(n=1,2,\dots),
	\end{array}\right.
	\end{eqnarray}
	and
	\begin{equation}\label{dftildthetpsialphn}
	\tilde\theta_{\psi,\alpha,n}(t)=\frac{3^{-\alpha n}}{\beta}\theta_\beta(t)\vert_{\beta=\psi(3^{-(n+1)})}-\theta_\delta(t)\vert_{\delta=3^{-\alpha n}}.
	\end{equation}
	Then, in view of \eqref{dfthatbetnew},
	\begin{equation}\label{proptildthat}
	\int_0^1\tilde\theta_{\psi,\alpha,n}(t)\,dt=0.
	\end{equation}
	Consider the following vector field in the cube $Q_1(\by)\,(\by\in\R^d)$:
	\begin{equation}\label{dfvecGammanewest}
	\vec\Gamma_{\by}(\e)=\Big(\int_{P_1\by}^{P_1\e}W_{\psi,\alpha}(\xi,\,(I-P_1)\e)\,d\xi,\,0,\dots,0\Big).
	\end{equation}
	By Proposition \ref{prdiv}, it is a generalized solution of the equation $\mathrm{div}(\vec\Gamma_{\by}(\e))=W_{\psi,\alpha}(\e)$ in the cube $Q_1(\by)$.
	Consider the cubes $\{Q_1(\vec l_i)\}_{i=1}^K\;(\vec l_i\in\Z^d)$ intersecting the cube $Q_1(\by)$. It is clear that 
	$K\le 2^d$.   
	Denote $J(\by,i)=[P_1\by,\,P_1\by+1]\cap [P_1\vec l_i,\,P_1\vec l_i+1]$, $J(\by,i,n,k)=J(\by,i)\cap (I_{n,k}+P_1\vec l_i)$. Then, in view of 
	definitions \eqref{dfWpsialph}, \eqref{dfhatSigmaapsialph} and \eqref{dftildthetpsialphn}, property \eqref{proptildthat} and Lemma \ref{lmintoftildthet},
	we have  for $\e\in Q_1(\by)\cap Q_1(\vec l_i)$
	\begin{eqnarray}\label{estmodvecGamma}
	&&|\vec\Gamma_{\by}(\e)|=\Big|\int_{J(\by,i)}W_{\psi,\alpha}(\xi,\,(I-P_1)\e)\,d\xi\Big|\le\nonumber\\
	&&\sum_{n=1}^\infty\sum_{k=1}^{2^{n-1}}\Big|\int_{J(\by,i.n,k)}W_{\psi,\alpha}(\xi,\,(I-P_1)\e)\,d\xi\Big|=\nonumber\\
	&&\mathcal{N}(\vec l_i)\sum_{n=1}^\infty\sum_{k=1}^{2^{n-1}}\Big|\int_{J(\by,i.n,k)}\tilde\theta_{\psi,\alpha,n}(3^p\xi)\vert_{p=|\vec l_i|_\infty+1}\,d\xi\Big|\le\nonumber\\
	&&\frac{2\mathcal{N}(\vec l_i)}{3^{|\vec l_i|_\infty+1}}\sum_{n=1}^\infty\sum_{k=1}^{2^{n-1}}\int_0^1|\tilde\theta_{\psi,\alpha,n}(t)|\,dt\le
	\frac{2\mathcal{N}(\vec l_i)}{3^{|\vec l_i|_\infty+1}}\sum_{n=1}^\infty\Big(\frac{2}{3^\alpha}\Big)^n.
	\end{eqnarray}
	The left inequality of \eqref{addcondforalph} implies that the last series in \eqref{estmodvecGamma} converges. 
	Assume that along with conditions \eqref{cndNl1new} the function $\mathcal{N}(\vec l)$ satisfies the condition
	\begin{equation}\label{addcndfotNl}
	\frac{\mathcal{N}(\vec l)}{3^{|\vec l|_\infty}}\rightarrow 0\quad\mathrm{as}\quad |\vec l|_\infty\rightarrow\infty, 
	\end{equation}
	Then estimate \eqref{estmodvecGamma} implies that
	\begin{equation*}
\Vert\vec \Gamma\Vert_{L_d(Q_r(\by))}\le
\sup_{\e\in Q_1(\by)}|\vec\Gamma_{\by}(\e)|\rightarrow 0\quad\mathrm{as}\quad|\by|\rightarrow\infty,
	\end{equation*}
	hence condition \eqref{cndpert}  is satisfied with $W=W_{\psi,\alpha}$.
	On the other hand, we have shown in Example \ref{ex4} that the right inequality of \eqref{addcondforalph} and condition \eqref{cndNl1new} imply 
	discreteness of the 
	spectrum of  operator $-\Delta+V_\alpha(\e)\cdot$. These circumstances imply that the conditions  
	of  claim (ii) of Theorem \ref{thperturb} are satisfied for the potential $V_\psi(\e)$. Thus, under conditions \eqref{cndNl1new}, \eqref{addcondforalph} and \eqref{addcndfotNl} 
	the spectrum of operator $-\Delta+V_\psi(\e)\cdot$ is discrete. 
	
	Let us show that the potential $V_\psi(\e)$ does not satisfy condition \eqref{cndlebesrear} of Theorem \ref{thcondlebesrear}.
	Let us take a function $\hat\gamma(r)$ satisfying condition \eqref{cndtildgam} for some $r_0>0$. Then by \eqref{cndforpsi},
$\limsup_{r\downarrow 0}\frac{\hat\gamma(r)}{\psi(r)}=\infty$.
Hence for some positive decreasing sequence $\{r_j\}_{j=1}^\infty$ tending to zero
	\begin{equation}\label{decrseqrj}
	\lim_{j\rightarrow \infty}\frac{\hat\gamma(r_j)}{\psi(r_j)}=\infty.
	\end{equation}
	Let us choose an increasing sequence of natural numbers $\{n_j\}_{j=1}^\infty$ such that 
	\begin{equation}\label{ineqforrj}
	3^{-(n_j+1)}\le r_j<3^{-n_j}.
	\end{equation}
	Consider the vectors $\vec l\in\Z^d$ of the form $\vec l=(l,0,\dots,0)$, where $l\in\Z$ will be chosen below. Consider the intervals $I_{n_j,1}=[a_{n_j,1},\,b_{n_j,1}]\subset D_{n_j}$ and
	the cubes $Q_{j,l}=Q_{3^{-n_j}}(\tilde\by_{j,l})$, where $\tilde\by_{j,l}=\big(a_{n_j,1}+l,\,0,\dots,0\big)$. Then $P_1(Q_{j,l})=I_{n_j,1}+l$. In view of the right inequality
	of \eqref{ineqforrj}, $Q_{r_j}(\tilde\by_{j,l})\subset Q_{j,l}$. Then using definitions \eqref{dftildthetpsialphn} and \eqref{dfhatSigmaapsialph}, Lemma \ref{lmpropsigmaNbetnew}, the left inequality of \eqref{ineqforrj} and taking $l=n_j$, we have:
	\begin{eqnarray}\label{estmespositVpsi}
	&&\mathrm{mes}_d\big(\{\e\in Q_{r_j}(\tilde\by_{j,l}):\,V_\psi(\e)>0\}\big)\le
	\mathrm{mes}_d\big(\{\e\in Q_{j,l}:\,V_\psi(\e)>0\}\big)=\nonumber\\
	&&3^{-n_j(d-1)}\mathrm{mes}_1\big(\{x\in I_{n_j,1}:\,\theta_\beta(3^{l+1}x)\vert_{\beta=\psi(3^{-(n_j+1)})}\}\big)\le\nonumber\\
	&&\psi(3^{-(n_j+1)})(3^{-n_jd}+3\cdot 3^{-n_j(d-1)}3^{-l-1})=\nonumber\\
	&&2\psi(3^{-(n_j+1)})3^{-(n_j+1)d}3^d\le 2\cdot 3^d\psi(r_j)\mathrm{mes}_d(Q_{r_j}(\tilde\by_{j,l})).
	\end{eqnarray} 
	On the other hand, in view of \eqref{decrseqrj}, 
	\begin{equation*}
	\exists\, J>0\quad\forall\, j\ge J:\quad 2\cdot 3^d\psi(r_j)<\hat\gamma(r_j).
	\end{equation*}
	This circumstance, estimate \eqref{estmespositVpsi} and definition 
	\eqref{dfSVyrdel}-\eqref{dfLVsry} imply that 
	\begin{equation*}
	\forall\,j\ge J:\quad \bar V_\psi^\star\big(\hat\gamma(r_j)\mathrm{mes}_d(Q_{r_j}(\by_j)),\,Q_{r_j}(\by_j)\big)=0, 
	\end{equation*}
	where $\by_j=\tilde\by_{j,n_j}$. This means that the potential $V_\psi(\e)$ does not satisfy condition \eqref{cndlebesrear}.
	\end{example}

In the construction of Example \ref{ex5} we have used the following claim:

\begin{lemma}\label{lmintoftildthet}
	Let $\tilde\theta:\,\R\rightarrow\R$ be $1$-periodic locally integrable function such that $\int_0^1\tilde\theta(t)\,dt=0$. Then for any 
	interval $[a,b]$ and $S>0$ the inequality
	\begin{equation*}
|\int_a^b\tilde\theta(Sx)\,dx|\le\frac{2}{S}\int_0^1|\tilde\theta(t)|\,dt
	\end{equation*}
	is valid.
\end{lemma}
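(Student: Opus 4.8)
The plan is to reduce the estimate to a statement about an antiderivative of $\tilde\theta$, and then to exploit the mean-zero hypothesis to make that antiderivative $1$-periodic, hence uniformly bounded. First I would substitute $t=Sx$ in the integral on the left-hand side, which turns it into $\frac{1}{S}\int_{Sa}^{Sb}\tilde\theta(t)\,\mathrm{d}t$. Thus it suffices to show, for arbitrary real numbers $A=Sa$ and $B=Sb$, the bound
\begin{equation*}
\Big|\int_{A}^{B}\tilde\theta(t)\,\mathrm{d}t\Big|\le 2\int_0^1|\tilde\theta(t)|\,\mathrm{d}t,
\end{equation*}
since dividing by $S$ then yields the asserted inequality.

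Next I would introduce the antiderivative $\Phi(x)=\int_0^x\tilde\theta(t)\,\mathrm{d}t$, so that $\int_A^B\tilde\theta(t)\,\mathrm{d}t=\Phi(B)-\Phi(A)$. The key observation is that $\Phi$ is itself $1$-periodic: indeed $\Phi(x+1)-\Phi(x)=\int_x^{x+1}\tilde\theta(t)\,\mathrm{d}t=\int_0^1\tilde\theta(t)\,\mathrm{d}t=0$, where the middle equality uses the $1$-periodicity of $\tilde\theta$ and the last equality is the mean-zero hypothesis. Consequently, for any real $y$ I set $\rho=y-[y]\in[0,1)$, its fractional part; since $[y]\in\Z$, periodicity gives $\Phi(y)=\Phi(\rho)$, whence $|\Phi(y)|=\big|\int_0^{\rho}\tilde\theta(t)\,\mathrm{d}t\big|\le\int_0^{\rho}|\tilde\theta(t)|\,\mathrm{d}t\le\int_0^1|\tilde\theta(t)|\,\mathrm{d}t$. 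In other words, $\Phi$ is bounded in absolute value by $\int_0^1|\tilde\theta(t)|\,\mathrm{d}t$ on all of $\R$.

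Finally, the triangle inequality gives $|\Phi(B)-\Phi(A)|\le|\Phi(B)|+|\Phi(A)|\le 2\int_0^1|\tilde\theta(t)|\,\mathrm{d}t$, and combining this with the change-of-variables reduction completes the argument. There is no genuine obstacle in this proof; the only point requiring care is the passage from a bound on $[0,1]$ to one valid for every real argument, which is precisely where both the periodicity and the vanishing mean of $\tilde\theta$ enter. Without the mean-zero assumption the function $\Phi$ would grow linearly and no such uniform bound would be available, so that hypothesis is used in an essential way.
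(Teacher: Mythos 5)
Your proof is correct and follows essentially the same route as the paper: the paper splits $\int_{Sa}^{Sb}\tilde\theta(t)\,dt$ into an integer number of full periods (which vanish by the mean-zero hypothesis) plus two boundary pieces each bounded by $\int_0^1|\tilde\theta(t)|\,dt$, which is exactly the content of your observation that the antiderivative $\Phi$ is $1$-periodic and bounded by $\int_0^1|\tilde\theta(t)|\,dt$. Your packaging via $\Phi$ is a clean, equivalent restatement of the same argument.
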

\begin{proof}
	The representation
	\begin{eqnarray*}
		&&\int_a^b\tilde\theta(Sx)\,dx=\frac{1}{S}\int_{Sa}^{Sb}\tilde\theta(t)\,dt=\frac{1}{S}\Big(\int_{[Sa]}^{[Sb]}\tilde\theta(t)\,dt-
		\int_{[Sa]}^{Sa}\tilde\theta(t)\,dt+\\
		&&\int_{[Sb]}^{Sb}\tilde\theta(t)\,dt\Big)=
		\frac{1}{S}\Big(-
		\int_{[Sa]}^{Sa}\tilde\theta(t)\,dt+\int_{[Sb]}^{Sb}\tilde\theta(t)\,dt\Big)
	\end{eqnarray*}
	implies the desired estimate.
\end{proof}

\appendix

\section{Some classes  of solutions of the divergence equation}
\label{sec:A}

In this section we consider a class of generalized
solutions of the divergence equation
\begin{equation}\label{diveq}
\mathrm{div}\vec\Gamma=W
\end{equation}
in the cube $Q_r(\by)$ Here $W$ is a distribution on $Q_r(\by)$. Recall
that a vector distribution $\vec\Gamma$ is called a
{\it generalized solution} of the above equation, if it satisfies \eqref{diveq} in the distributional sense. In particular, this means, that if $W\in L_1(Q_r(\by))$ and $\vec\Gamma\in L_1(Q_r(\by)$, then
\begin{eqnarray}\label{defgensol}
&&\forall\,\phi\in
C_0^\infty(Q_r(\by)):\nonumber\\
&&\int_{Q_r(\by)}W(\e)\phi(\e)\,
\mathrm{d}\e=-\int_{Q_r(\by)}\langle\vec\Gamma(\e),\,\nabla\phi(\e)\rangle\,
\mathrm{d}\e,
\end{eqnarray}
(\cite{Bour-Br}, \cite{Pau-Pf}). 

\subsection{Reduction to ODE}

Suppose that $W\in L_1(Q_r(\by))$. We shall look for a solution of equation
(\ref{diveq}) of the form:
\begin{equation}\label{frmsol}
\vec\Gamma(\e)=u(\e)\,\vec g(\e)\quad (\e=(x_1,\,x_2,\,\dots\,x_d)),
\end{equation}
where $\vec g(\e)$ is a given vector field on $Q_r(\by)\;(\by=(y_1,\,y_2,\,\dots\,y_d))$  and $u(\e)$ is a unknown
scalar function. Let us take the constant vector field 
\begin{equation}\label{constvecfield}
\vec g(\e)=(1,\,0,\dots,\,0).
\end{equation}
Then (\ref{diveq}) is reduced to
$\frac{\partial u}{\partial x_1}=W(\e)$. 
Consider the solution of last equation in $Q_r(\by)$, satisfying the initial condition 
$u(\e)\vert_{x_1=y_1}=0$.
It has the form
\begin{equation}\label{solscaleq}
u(\e)=\int_{y_1}^{x_1}W(s,\e^\prime)\,\mathrm{d}s\quad(\e^\prime=(x_2,\,x_3,\,\dots,\,x_d)).
\end{equation}
The following claim is valid:

\begin{proposition}\label{prdiv} 
	Let $\vec\Gamma(\e)$ be the vector field, defined by \eqref{frmsol}, \eqref{constvecfield} and \eqref{solscaleq}.
	 \vskip2mm
\noindent (i) If $W\in C^1(Q_r(\by))$, then $\vec\Gamma(\e)$ is a classical solution of divergence equation \eqref{diveq} in $Q_r(\by)$;
	\vskip2mm
	
\noindent (ii) The linear operator $U(W)(\e):=\int_{y_1}^{x_1}W(s,\e^\prime)\,\mathrm{d}s$, taking part in the right hand side of \eqref{solscaleq} acts in the space $L_p(Q_r(\by))\,(p\ge 1)$ and it is bounded;
	\vskip2mm
	
\noindent (iii) If $W\in L_p(Q_r(\by))$, then $\vec\Gamma(\e)$ is a generalized solution of \eqref{diveq}, belonging to $L_p(Q_r(\by))$.
	\end{proposition}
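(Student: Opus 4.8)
The plan is to handle the three parts in order, the heart of the matter being the $L_p$-boundedness of the one-dimensional antiderivative operator $U$ in part (ii), from which (iii) will follow by a standard density argument.

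For part (i) I would argue directly. Since $W\in C^1(Q_r(\by))$, the function $u(\e)=\int_{y_1}^{x_1}W(s,\e^\prime)\,\mathrm{d}s$ from \eqref{solscaleq} is continuously differentiable on $Q_r(\by)$: the fundamental theorem of calculus gives $\partial u/\partial x_1=W(\e)$, while differentiation under the integral sign (legitimate because $W$ and its partials are continuous on the compact cube) shows the remaining partials exist and are continuous. As $\vec\Gamma=u\,\vec g=(u,0,\dots,0)$ by \eqref{frmsol}--\eqref{constvecfield}, we obtain $\mathrm{div}\vec\Gamma=\partial u/\partial x_1=W$ pointwise, so $\vec\Gamma$ is a classical solution.

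For part (ii) the key estimate is a Hardy-type bound obtained fibrewise in the $x_1$-variable. Fix $\e^\prime=(x_2,\dots,x_d)$ and write $f(s)=W(s,\e^\prime)$. For $1<p<\infty$ with conjugate exponent $p^\prime=p/(p-1)$, Hölder's inequality gives
$$\Big|\int_{y_1}^{x_1}f(s)\,\mathrm{d}s\Big|\le (x_1-y_1)^{1/p^\prime}\Big(\int_{y_1}^{y_1+r}|f(s)|^p\,\mathrm{d}s\Big)^{1/p}\le r^{1/p^\prime}\Big(\int_{y_1}^{y_1+r}|f(s)|^p\,\mathrm{d}s\Big)^{1/p}.$$
Raising to the $p$-th power and integrating in $x_1$ over $[y_1,y_1+r]$ produces the factor $r^{p/p^\prime+1}=r^{p}$ (since $p/p^\prime+1=p$), whence
$$\int_{y_1}^{y_1+r}\Big|\int_{y_1}^{x_1}f(s)\,\mathrm{d}s\Big|^p\,\mathrm{d}x_1\le r^{p}\int_{y_1}^{y_1+r}|f(s)|^p\,\mathrm{d}s.$$
Integrating over $\e^\prime$ and applying Fubini yields $\Vert U(W)\Vert_p\le r\,\Vert W\Vert_p$; the case $p=1$ (replace Hölder by the trivial $L_1$ bound $\big|\int_{y_1}^{x_1}f\big|\le\int_{y_1}^{y_1+r}|f|$) and the case $p=\infty$ (a direct supremum estimate giving $\Vert U(W)\Vert_\infty\le r\,\Vert W\Vert_\infty$) are checked separately. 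This proves $U$ is bounded on $L_p(Q_r(\by))$ for all $p\ge 1$.

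For part (iii) I would pass from (i) to the $L_p$ setting by approximation. For $1\le p<\infty$ choose $W_n\in C^1(Q_r(\by))$ with $W_n\to W$ in $L_p$; by part (ii) the corresponding fields $\vec\Gamma_n=U(W_n)\vec g$ converge in $L_p$ to $\vec\Gamma=U(W)\vec g$, and since $Q_r(\by)$ is bounded this is also convergence in $L_1$. Each $\vec\Gamma_n$ is a classical, hence generalized, solution, so for every $\phi\in C_0^\infty(Q_r(\by))$ the identity $\int_{Q_r(\by)}W_n\phi\,\mathrm{d}\e=-\int_{Q_r(\by)}\langle\vec\Gamma_n,\nabla\phi\rangle\,\mathrm{d}\e$ holds; because $\phi$ and $\nabla\phi$ are bounded with compact support, both sides pass to the limit and yield \eqref{defgensol} for $W$ and $\vec\Gamma$. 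The case $p=\infty$ reduces to a finite exponent since $W\in L_\infty(Q_r(\by))\subset L_1(Q_r(\by))$, while membership $\vec\Gamma\in L_p$ is exactly the boundedness from (ii). The main point requiring care is the endpoint $p=\infty$, where density of smooth functions fails and the weak formulation must be argued through $L_1$; everything else is routine.
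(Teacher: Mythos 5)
Your proof is correct and follows essentially the same route as the paper: part (i) by the fundamental theorem of calculus, part (ii) by the fibrewise H\"older (Hardy-type) estimate in the $x_1$-variable giving $\Vert U(W)\Vert_p\le r\Vert W\Vert_p$ (the paper keeps $(x_1-y_1)^{p-1}$ inside the integral and gets the slightly sharper constant $r/p^{1/p}$), and part (iii) by density of $C^1$ in $L_p$ and passage to the limit in the weak formulation \eqref{defgensol}. The extra care you take at the endpoints $p=1$ and $p=\infty$ is a harmless refinement of the same argument.
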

\begin{proof}
(i) We see from \eqref{solscaleq} that the function $u(\e)$	belongs to $C^1(Q_r(\by))$ and it satisfies the equation $\frac{\partial u}{\partial x_1}=W(\e)$ in the cube $Q_r(\by)$. Hence $\vec\Gamma(\e)$ is a classical solution of \eqref{diveq} in $Q_r(\by)$. Claim (i) is proven

(ii) We have for $W\in L_p(Q_r(\by))$, using H\"older inequality:
\begin{eqnarray*}
&&\Vert U(W)\Vert_p^p=\int_{Q_r(\by)}\Big|\int_{y_1}^{x_1}W(s,\e^\prime)\,\mathrm{d}s\Big|^p\,\mathrm{d}\e\le\\
&&\int_{Q_r(\by)}(x_1-y_1)^{p-1}\int_{y_1}^{x_1}|W(s,\e^\prime)|^p\,\mathrm{d}s\,\mathrm{d}x_1\,\mathrm{d}\e^\prime\le \frac{r^p}{p}\Vert W\Vert_p^p. 
\end{eqnarray*}
Claim (ii) is proven.

(iii)	By claim
	(i), for any $W\in C^1(Q_r(\by))$ the vector field $\vec\Gamma(\e)$
	is a classical solution of equation (\ref{diveq}), hence it is a
	generalized solution of it, i.e. the relation (\ref{defgensol}) is
	valid. On the other hand, in view of \eqref{frmsol}, \eqref{constvecfield}, \eqref{solscaleq} and  claim (ii) ,  the correspondence
	$W\rightarrow \vec\Gamma$ is continuous with respect to $L_p$-norm.  Using the density of $C^1(Q_r(\by))$ in $L_p(Q_r(\by))$, we can extend the relation (\ref{defgensol}) by
	continuity from $W\in C^1(Q_r(\by))$ to $W\in L_p(Q_r(\by))$. Claim
	(iii) is proven.
\end{proof}

\subsection{Periodic potential solutions}

Let us look for solutions of the divergence equation \eqref{diveq} in the cube $Q_1(\vec 0)$, which satisfy the periodic boundary condition (\cite{Bour-Br}). This means that we consider these solutions $\vec \Gamma(\e)$ defined on the whole $\R^d$, which are $1$-periodic by all the variables $x_1,\,x_2,\dots,\,x_d$, i.e., they can be considered as defined on the torus $\T^d$. We shall consider  potential solutions of \eqref{diveq}, i.e., those $\vec \Gamma(\e)$, for which there is a scalar function 
$\phi(\e)$ such that $\vec \Gamma(\e)=\nabla\phi(\e)$. Then this function should satisfy the Poisson equation
\begin{equation}\label{Poiseq}
\Delta\phi=W.
\end{equation}
Recall that Fourier operator $\mathcal{F}$ on $\T^d$ maps isometrically the space $L_2(\T^d)$ onto the space $l_2(\Z^d)$ \cite{Kam}.
Denote by $L_p^\#(\T^d)$ the set of all functions $f\in L_p(\T^d)$, for which $\int_{\T^d}f(\e)\,\mathrm{d}\e=0$, i.e., $\mathcal{F}(f)(\vec 0)=0$. 
Also denote $l_p^0(\Z^d):=\{g\in l_p(\Z^d):\;g(\vec 0)=0\}$.
We have the following claim:
\begin{proposition}\label{prdiveq}
	Suppose that $p>2$, $1/p+1/q=1$, $W\in L_2^\#(\T^d)$ and
\begin{equation}\label{inclvecG}
\vec G\in l_{q}\big(\Z^d\big), 
\end{equation}	
where
\begin{equation}\label{dfvecG}
\vec G(\vec k)=\frac{\mathcal{F}(W)(\vec k)}{2\pi i|\vec k|^2}\vec k.
\end{equation} 
	Then the vector function
\begin{equation}\label{fourtransdiveq}
\vec \Gamma(\e)=\mathcal{F}^{-1}(\vec G)(\e)
\end{equation}	
belongs to $L_p^\#(\T^d)$ and it is a potential solution of divergence equation \eqref{diveq}. Furthermore, 
\begin{equation}\label{estsoldiveq}
|\Vert\vec\Gamma\Vert_p\le d\Vert\vec G\Vert_{l_{q}} .
\end{equation}
\end{proposition}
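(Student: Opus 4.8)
The plan is to recognize $\vec\Gamma$ as the gradient of the solution of Poisson equation \eqref{Poiseq}, read off on the Fourier side, and then to control its $L_p(\T^d)$-norm by the Hausdorff--Young inequality. First I would observe that, since $p>2$, the conjugate exponent satisfies $q=p/(p-1)\in[1,2)$, so that $l_q(\Z^d)\subseteq l_2(\Z^d)$; hence $\vec G\in l_2(\Z^d)$ and the vector function $\vec\Gamma=\mathcal{F}^{-1}(\vec G)$ of \eqref{fourtransdiveq} is a well-defined element of $L_2(\T^d)$, on which $\mathcal{F}$ and $\mathcal{F}^{-1}$ are mutually inverse isometries. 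In particular $\mathcal{F}(\vec\Gamma)=\vec G$.

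Next I would establish membership in $L_p(\T^d)$ together with estimate \eqref{estsoldiveq}. Writing $\vec G=(G_1,\dots,G_d)$ and $\vec\Gamma=(\Gamma_1,\dots,\Gamma_d)$ with $\Gamma_j=\mathcal{F}^{-1}(G_j)$, I would apply the Hausdorff--Young inequality on the torus, $\Vert\mathcal{F}^{-1}(g)\Vert_p\le\Vert g\Vert_{l_q}$ for $1\le q\le 2$ and $1/p+1/q=1$, to each scalar component. Since $|G_j(\vec k)|\le|\vec G(\vec k)|$ pointwise, this gives $\Vert\Gamma_j\Vert_p\le\Vert G_j\Vert_{l_q}\le\Vert\vec G\Vert_{l_q}$ for every $j$. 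Combining the elementary bound $|\vec\Gamma(\e)|\le\sum_{j=1}^d|\Gamma_j(\e)|$ with Minkowski's inequality then yields
\begin{equation*}
\Vert\vec\Gamma\Vert_p\le\sum_{j=1}^d\Vert\Gamma_j\Vert_p\le d\,\Vert\vec G\Vert_{l_q},
\end{equation*}
which is simultaneously $\vec\Gamma\in L_p(\T^d)$ and \eqref{estsoldiveq}.

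To see that $\vec\Gamma\in L_p^\#(\T^d)$, I would use that $W\in L_2^\#(\T^d)$ forces $\mathcal{F}(W)(\vec 0)=0$, whence $\vec G(\vec 0)=0$ by \eqref{dfvecG} and $\int_{\T^d}\vec\Gamma\,\mathrm{d}\e=\mathcal{F}(\vec\Gamma)(\vec 0)=\vec G(\vec 0)=0$. For the potential-solution claim I would introduce the candidate potential $\phi=\mathcal{F}^{-1}(\hat\phi)$ with $\hat\phi(\vec k)=-\mathcal{F}(W)(\vec k)/(4\pi^2|\vec k|^2)$ for $\vec k\neq\vec 0$ and $\hat\phi(\vec 0)=0$; since $|\hat\phi(\vec k)|\le|\mathcal{F}(W)(\vec k)|/(4\pi^2)$ and $\mathcal{F}(W)\in l_2(\Z^d)$, this $\phi$ lies in $L_2(\T^d)$. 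A direct computation of Fourier coefficients gives $\mathcal{F}(\partial_j\phi)(\vec k)=2\pi i k_j\hat\phi(\vec k)=G_j(\vec k)$, so that $\nabla\phi=\vec\Gamma$ and $\vec\Gamma$ is indeed a potential field.

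It then remains to verify that $\vec\Gamma$ solves \eqref{diveq} in the distributional (periodic) sense. This I would do on the Fourier side: using $\mathcal{F}(\Gamma_j)=G_j$ and $\sum_{j=1}^d k_j^2=|\vec k|^2$,
\begin{equation*}
\mathcal{F}(\mathrm{div}\,\vec\Gamma)(\vec k)=\sum_{j=1}^d 2\pi i k_j\,G_j(\vec k)=2\pi i\,\frac{\mathcal{F}(W)(\vec k)}{2\pi i|\vec k|^2}\sum_{j=1}^d k_j^2=\mathcal{F}(W)(\vec k)
\end{equation*}
for $\vec k\neq\vec 0$, while both sides vanish at $\vec k=\vec 0$; since a periodic distribution is determined by its Fourier coefficients, this gives $\mathrm{div}\,\vec\Gamma=W$, i.e. \eqref{defgensol} holds in the periodic setting. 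The single genuine analytic input here is the Hausdorff--Young inequality on $\T^d$; everything else is bookkeeping with Fourier coefficients. The only points demanding care are to keep track of the vector structure so that the constant comes out exactly as $d$ (rather than $\sqrt d$ or $1$), and to interpret \eqref{diveq} in the weak sense appropriate to periodic solutions, which legitimizes testing against the trigonometric modes.
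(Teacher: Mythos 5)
Your proposal is correct and follows essentially the same route as the paper: the paper obtains your Hausdorff--Young bound $\Vert\mathcal{F}^{-1}(g)\Vert_p\le\Vert g\Vert_{l_q}$ by interpolating (Riesz--Thorin) between $l_2\rightarrow L_2$ and $l_1\rightarrow L_\infty$, and likewise constructs $\vec\Gamma$ as $\nabla\phi$ with $\phi$ solving the Poisson equation on the Fourier side. The only cosmetic difference is that you verify $\mathrm{div}\,\vec\Gamma=W$ by direct computation of Fourier coefficients, whereas the paper appeals to $\phi\in W_2^2(\T^d)$; both are sound.
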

\begin{proof}
For brevity we shall omit $(\T^d)$ and $(\Z^d)$ in the notations $L_p(\T^d)$, $L_p^\#(\T^d)$, $l_q(\Z^d)$ and $l_q^0(\Z^d)$.
	Notice that since the operator $\mathcal{F}^{-1}:\,l_2\rightarrow L_2$ is isometric,
	for any $g\in l_2\;$ $\Vert\mathcal{F}^{-1}(g)_2=\Vert g \Vert_{l_2}$. On the other hand, in view of the formula for $\mathcal{F}^{-1}$, given in Section \ref{sec:notation},  $\mathcal{F}^{-1}$ maps  $l_1$ into  $L_\infty$ and for any $g\in l_1$ $\Vert\mathcal{F}^{-1}(g)\Vert_\infty\le\Vert g \Vert_{l_1}$. Then by the Riesz-Thorin Interpolation Theorem (\cite{Gr}, Theorem 1.3.4) , $\mathcal{F}^{-1}$ maps $l_q$ into  $L_p$ and for any $g\in l_q$  $\Vert\mathcal{F}^{-1}(g)\Vert_p\le\Vert g \Vert_{l_q}$.  Notice that since $p>2$,  
	$l_q\subset l_2$ and $L_p\subset L_2$. Then since $\mathcal{F}^{-1}:\,l_2\rightarrow L_2$ is injective, $\mathcal{F}^{-1}:\,l_q\rightarrow L_p$ is injective too.  It is clear that $\mathcal{F}^{-1}$ maps injectively $l_q^0$ into $L_p^\#$,   It is easy to see that the analogous property is valid for vector functions: $\mathcal{F}^{-1}$ maps injectively $l_q^0$ into $L_p^\#$ and  for any $\vec H\in l_q^0$
	\begin{equation}\label{RieszThor}
	\Vert\mathcal{F}^{-1}(\vec H)\Vert_p\le d\Vert\vec H\Vert_{l_q}. 
	\end{equation}
 Applying the Fourier transform to equation \eqref{Poiseq} and denoting $u(\vec k)=\mathcal{F}(\phi)(\vec k)$, we obtain: 
\begin{equation*}
(2\pi i)^2|\vec k|^2u(\vec k)=\mathcal{F}(W)(\vec k).
\end{equation*}
	Since $W\in L_2^\#$ (hence $\mathcal{F}(W)\in l_2$ and $\mathcal{F}(W)(\vec 0)=0$ ), the last equation has the  solution 
\begin{eqnarray*}
u(\vec k)=\frac{\mathcal{F}(W)(\vec k)}{(2\pi i)^2|\vec k|^2}
.\end{eqnarray*}
this solution belongs to the subspace $l_2^0$ and it is unique there. Hence the function $\phi=\mathcal{F}^{-1}(u)$ is a unique solution of \eqref{Poiseq} in the subspace $L_2^\#$. Moreover, since $|\vec k|^2\ u\in l_2$, this solution belongs to the Sobolev space $W_2^2(\T^d)$. Then $\vec\Gamma=\nabla\phi$ is a solution of divergence equation \eqref{diveq} and it is expressed by \eqref{fourtransdiveq}. Furthermore,    
in view of \eqref{dfvecG},  \eqref{inclvecG} and \eqref{RieszThor}, inclusion $\vec\Gamma\in L_p^\#$ and estimate \eqref{estsoldiveq} are valid.   
\end{proof} 

\section{Some estimates}
\label{sec:B}

Let $\Omega$ be a bounded open domain in $\R^d$.   
\begin{proposition}\label{lmest}
 Suppose that $d>2$.
	
\noindent	(i) If $W\in L_{d/2}(\Omega)$, then
	for any $u\in C_0^\infty(\Omega)$
	\begin{equation}\label{Sobineq}
	|\int_\Omega W(\e)|u(\e)|^2\, \mathrm{d}\e|\le C^2(d)\Vert
	W\Vert_{d/2}\int_\Omega|\nabla u(\e)|^2\, \mathrm{d}\e,
	\end{equation}
	where $C(d)$ is expressed by (\ref{dfC}),
	\vskip2mm
	
\noindent	(ii)  Suppose that there is a vector field
	$\vec\Gamma(\e)$ on $\Omega$ belonging to $L_d(\Omega)$ and
	satisfying the equation $\mathrm{div}\vec\Gamma(\e)=W(\e)$ in the generalized sense, where
	$W\in L_1(\Omega)$. Then for any $u\in C_0^\infty(\Omega)$
	\begin{equation}\label{estintlor1}
	|\int_{\Omega} W(\e)|u(\e)|^2\,\mathrm{d}\e|\le 2\,C(d)\,\Vert
	\vec\Gamma\Vert_d\int_{\Omega}|\nabla u(\e)|^2\, \mathrm{d}\e;
	\end{equation}
	\end{proposition}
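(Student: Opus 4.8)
The plan is to reduce both inequalities to the sharp Sobolev embedding $\dot W_2^1(\R^d)\hookrightarrow L_{2d/(d-2)}(\R^d)$. First I would record that for every $u\in C_0^\infty(\Omega)$, extended by zero to all of $\R^d$, one has
\begin{equation}\label{plansobolev}
\Vert u\Vert_{2d/(d-2)}\le C(d)\,\Vert\nabla u\Vert_2,
\end{equation}
with $C(d)$ given by \eqref{dfC}. The point I would emphasize is that the constant in \eqref{dfC} is exactly the best (Talenti--Aubin) constant for this inequality, so \eqref{plansobolev} holds; here I would lean on the known extremal computation for the Sobolev inequality and cite it.

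Granting \eqref{plansobolev}, claim (i) follows at once. Since $W\in L_{d/2}(\Omega)$ and $\frac{2}{d}+\frac{d-2}{d}=1$, H\"older's inequality with conjugate exponents $d/2$ and $d/(d-2)$ gives
\begin{equation*}
\Big|\int_\Omega W(\e)|u(\e)|^2\,\mathrm{d}\e\Big|\le\Vert W\Vert_{d/2}\,\big\Vert |u|^2\big\Vert_{d/(d-2)}=\Vert W\Vert_{d/2}\,\Vert u\Vert_{2d/(d-2)}^2,
\end{equation*}
and then \eqref{plansobolev} bounds $\Vert u\Vert_{2d/(d-2)}^2$ by $C^2(d)\int_\Omega|\nabla u(\e)|^2\,\mathrm{d}\e$, which is \eqref{Sobineq}.

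For claim (ii) I would exploit the divergence structure. Since $\Omega$ is bounded, $\vec\Gamma\in L_d(\Omega)\subset L_1(\Omega)$, so $\vec\Gamma$ and $W$ satisfy the generalized equation in the sense of \eqref{defgensol}; moreover $\phi=|u|^2$ is real, smooth and compactly supported in $\Omega$, hence an admissible test function. Therefore
\begin{equation*}
\int_\Omega W(\e)|u(\e)|^2\,\mathrm{d}\e=-\int_\Omega\langle\vec\Gamma(\e),\,\nabla(|u(\e)|^2)\rangle\,\mathrm{d}\e.
\end{equation*}
Using $\nabla(|u|^2)=2\,\mathrm{Re}(\bar u\,\nabla u)$, hence $|\nabla(|u|^2)|\le 2|u|\,|\nabla u|$, together with H\"older's inequality for the three exponents $d$, $2d/(d-2)$ and $2$ (whose reciprocals sum to $1$), I obtain
\begin{equation*}
\Big|\int_\Omega W(\e)|u(\e)|^2\,\mathrm{d}\e\Big|\le 2\,\Vert\vec\Gamma\Vert_d\,\Vert u\Vert_{2d/(d-2)}\,\Vert\nabla u\Vert_2.
\end{equation*}
A final application of \eqref{plansobolev} to the factor $\Vert u\Vert_{2d/(d-2)}$ yields \eqref{estintlor1}.

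The routine parts are the two H\"older splittings and the elementary pointwise bound on $\nabla(|u|^2)$; the only genuine inputs are the identification of the constant in \eqref{dfC} with the sharp Sobolev constant, which is the main obstacle and where the precise value of $C(d)$ is needed, and the verification that $|u|^2$ is a legitimate test function in \eqref{defgensol}, which requires $\vec\Gamma\in L_1(\Omega)$ and is guaranteed by the boundedness of $\Omega$.
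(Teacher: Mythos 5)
Your proof is correct and follows essentially the same route as the paper: H\"older plus the sharp Sobolev inequality for part (i), and integration by parts against $\vec\Gamma$ followed by the same two inequalities for part (ii). The only cosmetic difference is that you apply a single three-exponent H\"older step where the paper first uses Cauchy--Schwarz and then invokes part (i) with $|\vec\Gamma|^2$ in place of $W$; the two computations are identical in substance.
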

\begin{proof} 
(i) Using H\"older inequality, we get:
	\begin{equation*}
	|\int_\Omega W(\e)|u(\e)|^2\, \mathrm{d}\e|\le\Vert
	W\Vert_{d/2}\Vert u\Vert_{q}^2,
	\end{equation*}
	where $q=\frac{2d}{d-2}$. On the other hand, by the Sobolev's
	theorem, the space $W_2^1(\R^d)$ is embedded continuously into
	$L_q(\R^d)$. Hence continuing each function $u\in
	C_0^\infty(\Omega)$ by zero from $\Omega$ to the whole $\R^d$, we
	get:
	\begin{equation}\label{sobineq}
	\Vert u\Vert_{q}^2\le C^2(d)\int_\Omega|\nabla u(\e)|^2\,
	\mathrm{d}\e,
	\end{equation}
	(\cite{Aub}, \cite{Tal}).  These circumstances imply the
	desired claim.
	
	(ii) Making integration by parts, we get for $u\in C_0^\infty(\Omega)$:
	\begin{equation*}
	\int_{\Omega}W(\e)|u(\e)|^2\, \mathrm{d}\e=
	-2\int_{\Omega}\Re\big(\langle\vec\Gamma\e),\,\overline{u(\e)}\,\nabla
	u(\e)\rangle\big)\, \mathrm{d}\e.
\end{equation*}
	Using Schwartz's inequality, we obtain:
	\begin{equation*}
	\int_{\Omega}W(\e)|u(\e)|^2\, \mathrm{d}\e\le\\
	2\,\Vert\vec\Gamma u\Vert_2 \Vert\nabla u|\Vert_2.
	\end{equation*}
	On the other hand, by claim (i): 
	\begin{equation*}
	\Vert\vec\Gamma u\Vert_2\le C(d)\Big(\Vert |\vec\Gamma|^2\Vert_{d/2}\Big)^{1/2}\Vert\nabla u|\Vert_2.=
	C(d)\Vert\vec\Gamma\Vert_d\Vert\nabla u|\Vert_2.
	\end{equation*}
	Te last two estimates imply the desired inequality \eqref{estintlor1}. Claim (ii) is proven.
\end{proof}


\begin{thebibliography}{ABC2}
	
	\bibitem{Aub}
	T.\,Aubin, \textit{Some nonlinear problems in Riemannian geometry},
	Springer, New-York, 1998.

\bibitem{Ben-Fort} V. Benci and D. Fortunato, \textit{Discreteness  Conditions of the Spectrum of Schr\"odinger
	Operators,} J. Math. Anal. Appl., \textbf{64} (1978), 695-700.

\bibitem{Bour-Br} J. Bourgain and H. Brezis, \textit{On the Equation $\mathrm{div}\,\mathrm{Y}=\mathrm{f}$ and Application to Control of Phases,} J. Amer.Math. Soc., Vol.\textbf{16}, No \textbf{2} (2003), 393-426

\bibitem{Ch} G. Choquet, \textit{Theory of Capacities,}
Ann. Inst. Fourier, \textbf{5} (1953), 131-295.

\bibitem{GMD} Gian Maria Dall'Ara, \textit{Discreteness of the spectrum of Schr\"odinger operators with matrix-valued non-negative
	potentials,} Journal of Functional Analysis \textbf{268} (2015),
3649-3679.

\bibitem{Gl} I.M. Glazman, \textit{Direct Methods of Qualitative Spectral Analysis of Singular Differential Operators,} Israel Program for Scientific Translations, 1965.

\bibitem{Gr} L. Grafakos, \textit{Classical Fourier Analysis,} Third Edition, Springer New York, 2014.

	\bibitem{Iwas} A. Iwasaka, \textit{Magnetic Schr\"odinger operator with compact resolvent,} J. Math. Kyoto Univ.(JMKYAZ), \textbf{26}-3 (1986), 357-374.
	
	\bibitem{Kam} D. W. Kammler, \textit{A First Course in Fourier Analysis,} Prentice Hall, Upper Saddle River, New Jersey, 2000.

	\bibitem{K-Sh} V. Kondratiev and M. Shubin, \textit{Discreteness of spectrum for the magnetic
		Schr¨odinger operators,} Commun. Partial Differential Equations
	\textbf{27} (2002), 477-525.

 \bibitem{Kon-Shub} V. Kondratiev and M. Shubin \textit{Discreteness of spectrum for the Schr\"odinger operator on manifolds with bounded geometry,} Operator Theory: Advances and Applications, Birkh\"auser Verlag Basel/Switzerland, Vol. \textbf{110}  (1999), 185-226.

\bibitem{L-S-W} D. Lenz, P. Stollmann and D. Wingert, \textit{Compactness of Schr\"odinger semigroups,} Math. Nachr. \textbf{283} (2010), No 1, 94-103.

\bibitem{Maz1} V. Mazya, \textit{Sobolev Spaces: with Applications to Elliptic Partial Differential Equations,} Springer, 2nd edition, series: Grundlehren der mathematischen Wissenschaften 342, 2011.

\bibitem{Maz} V. Mazya, \textit{Analytic criteria in the qualitative spectral analysis of the Schr\"odinger
	operator,} Procieedings of Simposia in Pure Mathematics, Spectral Theory and Mathematical Physics: A Festschrift in Honor of Barry Simon's 60th Birthday, A.M.S., Providence, Rhode Island, Vol {\bf 76}, Part {\bf 1} (2006), 257-288.

\bibitem{M-Sh} V. Mazya and M. Shubin, \textit{Discreteness of spectrum and positivity criteria for Schr\"odinger operators,} Ann. Math., \textbf{162} (2005), 919-942.

	\bibitem{M-V} V.G. Mazya and I.E. Verbitsky, \textit{The Schr\"odinger operator on the energy space: boundedness and
		compactness criteria.} Acta Math., \textbf{188} (2002), 263-302.

\bibitem{Mol} A. M. Molchanov, \textit{On conditions of discreteness of the spectrum for self-adjoint
	di®erential equations of the second order.} Trudy Mosk. Matem. Obshchestva (Proc. Moscow Math. Society)
\textbf{2} (1953), 169-199 (Russian).

\bibitem{N-W}[N-W] G.L. Nemhauser and L.A. Wolsey, \textit{Integer and Combinatorial Optimization,} John Willey\,\&\,Sons, 1988.

	\bibitem{Pau-Pf} Thierry De Pauw and Washek F. Pfeffer, \textit{Distributions for which
		$\mathrm{div}\,v=F$ has a continuous solution,} Comm. Pure Appl.
	Math. \textbf{61} (2008), No 2, 230-260.

\bibitem{Si1} B. Simon,  \textit{Schr\"odinger operators with purely discrete
	spectrum,}  Methods of Functional Analysis and Topology, Vol.
\textbf{15} (2009), no. 1, 61-66.

\bibitem{T}  M. Taylor \textit{Scattewring Lengtth and the Spectrum of $-\Delta+V$,} Canad. Math. Bull. Vol \textbf{49} (1) (2006), 144-151.

\bibitem{Zel1} L. Zelenko, \textit{Conditions of discreteness of the spectrum for Schr\"odinger operator and some optimization problems for capacity and measures,} Applied Analysis and Optimization, Vol. \textbf{3}, No 2 (2019), 281-306.

\bibitem{YRMR} A.K. Yadav, R. Ranjan, U. Mahbub and M,C. Rotkowitz, \textit{New Methods for Handling Binary Problems,} Annual Allerton Conference on Communication, Control and Computing, Allerton (2016)-Preprint

.
\bibitem{Tal} G. Talenti, \textit{Best Constant in Sobolev
		Inequality} Ann. Mat. Pura Appl. \textbf{110} (1976), 353-372.

	\end{thebibliography}
\end{document}